\definecolor{orange2}{HTML}{FF7F2A}
\tikzset{cdlabel/.style={above,sloped,%
    execute at begin node=$\scriptstyle,execute at end node=$}}
\tikzset{algarrow/.style={->, thick}}   
\tikzset{alb/.style={->, bend right=25, thick}}
\tikzset{arb/.style={->, bend left=25, thick}}
\tikzset{al/.style={->, bend right=20, thick}}
\tikzset{ar/.style={->, bend left=20, thick}}
\tikzset{als/.style={->, bend right=15, thick}}
\tikzset{ars/.style={->, bend left=15, thick}}
\tikzset{blgarrow/.style={->, thick}}
\tikzset{clgarrow/.style={->, thick}}
\tikzset{tensoralgarrow/.style={double, double equal sign distance, -implies}}
\tikzset{tensorblgarrow/.style={double, double equal sign distance, -implies}}
\tikzset{tensorclgarrow/.style={double, double equal sign distance, -implies}}
\tikzset{tensorelgarrow/.style={double, double equal sign distance, -implies}}
\tikzset{modarrow/.style={->, dashed}}
\tikzset{Amodar/.style={->, dashed}}
\tikzset{Dmodar/.style={->, dashed}}
\tikzset{DAmodar/.style={->, dashed}}
\theoremstyle{definition}
\declaretheorem[name=Definition,style=definition,numberwithin=section]{definition}
\theoremstyle{plain}
\declaretheorem[name=Lemma,style=plain,numberlike=definition]{lemma}
\declaretheorem[name=Theorem,style=plain,numberlike=definition]{theorem}
\declaretheorem[name=Proposition,style=plain,numberlike=definition]{proposition}
\theoremstyle{remark}
\declaretheorem[name=Remark,style=remark,numberlike=definition]{remark}
\numberwithin{equation}{section}
\numberwithin{figure}{section}
\numberwithin{table}{section}
\newcommand{\comp}{\DOTSB\circ}
\let\oldphi\phi
\DeclareMathOperator{\id}{id}
\newcommand{\optilde}[1]{\widetilde{#1}}
\newcommand{\opunorr}[1]{#1 {}^u}
\DeclareMathOperator{\Kh}{Kh}
\newcommand{\CTu}{\opunorr{\CTu}}
\newcommand{\curves}[1]{\overrightarrow{#1}}
\newcommand{\alphas}[1][]{%
  \ifthenelse{\equal{#1}{}}{\curves{\alpha}}{\curves{\alpha^{#1}}}
}
\newcommand{\betas}[1][]{%
  \ifthenelse{\equal{#1}{}}{\curves{\beta}}{\curves{\beta_{#1}}}
}
\newcommand{\Xs}[1][]{%
  \ifthenelse{\equal{#1}{}}{\curves{\mathbb{X}}}{\curves{\mathbb{X}{#1}}}
}
\newcommand{\Os}[1][]{%
  \ifthenelse{\equal{#1}{}}{\curves{\mathbb{O}}}{\curves{\mathbb{O}{#1}}}
}
\newcommand{\emptypoly}[2][]{%
  #2^{\ifthenelse{\equal{#1}{}}{\circ}{\circ, #1}}
}
\providecommand*{\twoheadrightarrowfill@}{%
  \arrowfill@\relbar\relbar\twoheadrightarrow
}
\providecommand*{\twoheadleftarrowfill@}{%
  \arrowfill@\twoheadleftarrow\relbar\relbar
}
\providecommand*{\xtwoheadrightarrow}[2][]{%
  \ext@arrow 0579\twoheadrightarrowfill@{#1}{#2}%
}
\providecommand*{\xtwoheadleftarrow}[2][]{%
  \ext@arrow 5097\twoheadleftarrowfill@{#1}{#2}%
}
\DeclareMathOperator{\Khred}{\overline{Kh}}
\DeclareMathOperator{\HFKhat}{\widehat{HFK}}
\newcommand{\C}{\text{C}_2^-}
\newcommand{\tensor}{\otimes}
\newcommand{\bigtensor}{\bigotimes}
\newcommand{\iso}{\cong}
\newcommand{\pmat}[1]{\begin{pmatrix}#1\end{pmatrix}}
\newcommand{\directsum}{\oplus}
\newcommand{\bigdirectsum}{\bigoplus}
\newcommand{\inj}[1][]{\xhookrightarrow{#1}}
\newcommand{\surj}[1][]{\xtwoheadrightarrow{#1}}
\newcommand{\homotopic}{\simeq}
\DeclareMathOperator{\cone}{cone}
\DeclareMathOperator{\smoothing}{sm}
\DeclareMathOperator{\cl}{cl}
\DeclareMathOperator{\gr}{gr}
\newcommand{\numset}[1]{\mathbb{#1}}
\newcommand{\Z}{\numset{Z}}
\newcommand{\R}{\numset{R}}
\newcommand{\Q}{\numset{Q}}
  \newcommand{\ctwohat}{\widehat{C}_2}
  \newcommand{\dr}{\mathcal{D}^\mathcal{R}}
  \newcommand{\LL}{\mathcal{L}}
  \newcommand{\ldplus}{\LL_{D}^+}
  \newcommand{\lsplus}{\LL_{S}^+}
  \newcommand{\lsplusprime}{\LL_{S'}^+}
  \newcommand{\lsplusdotdotdot}{\optilde{\lsplus}}
  \DeclareMathOperator{\CKh}{CKh}
  \DeclareMathOperator{\sm}{sm}
  \newcommand{\muthree}{\mu_{\rm{III}}}
  \newcommand{\mutwo}{\mu_{\rm{II}}}
  \newcommand{\muthreeprime}{\mu_{\rm{III}}'}
  \newcommand{\mutwoprime}{\mu_{\rm{II}}'}
  \newcommand{\muone}{\mu_{\rm{I}}}
  \newcommand{\muoneprime}{\mu_{\rm{I}}'}
  \newcommand{\dzw}{\mathcal{D}^\mathcal{B}}
  \newcommand{\coveredby}{\prec}
  \newcommand{\shortfact}[4]{\xymatrix{{#1}\ar@<1ex>[r]^{#2}& #3 \ar@<1ex>[l]^{ #4 }}}
  \DeclareMathOperator{\In}{In}
  \DeclareMathOperator{\Out}{Out}
  \DeclareMathOperator{\Fixed}{Fixed}
  \DeclareMathOperator{\Free}{Free}
  \newcommand{\filt}{\mathcal{F}}
  \newcommand{\spannedby}[1]{\langle #1 \rangle}
    \newcommand*{\defeq}{\mathrel{\vcenter{\baselineskip0.5ex \lineskiplimit0pt
    \hbox{\scriptsize.}\hbox{\scriptsize.}}}%
    =}
\newcommand{\intersect}{\cap}
\newcommand{\union}{\cup}
\newcolumntype{Y}{>{\centering\arraybackslash}X}
\newcommand{\nospaceperiod}{\makebox[0pt][l]{\,.}}
\begin{document}
\title{On The Invariance of the Dowlin Spectral Sequence}
\author{Samuel Tripp}
\address {Department of Mathematical Sciences, Worcester Polytechnic Institute, Worcester, MA 01609, 
  USA}
\email{\href{mailto:stripp@wpi.edu}{mailto:stripp@wpi.edu}}
\urladdr{\url{http://samueltripp.github.io}}
\author{Zachary Winkeler}
\address {Clark Science Center, Smith College, 44 College Lane, Northampton, MA 01063, USA}
\email{\href{mailto:zwinkeler@smith.edu}{zwinkeler@smith.edu}}


\begin{abstract}
Given a link $L$, Dowlin constructed a filtered complex inducing a spectral sequence with $E_2$-page isomorphic to the Khovanov homology $\Khred(L)$ and $E_\infty$-page isomorphic to the knot Floer homology $\HFKhat(m(L))$ of the mirror of the link. In this paper, we prove that the $E_k$-page of this spectral sequence is also a link invariant, for $k\ge 3$. 
\end{abstract}

\maketitle

\tableofcontents

\section{Introduction}
In \cite{nate}, Dowlin associates a filtered chain complex to a link $L$. The spectral sequence this filtered complex gives rise to has $E_2$-page isomorphic to the (reduced) Khovanov homology $\Khred(L)$ and converges to the knot Floer homology $\HFKhat(m(L))$ of the mirror of the link. The fact that the $E_2$- and $E_{\infty}$-pages of the spectral sequence are link invariants, not dependent on the diagram used to construct the filtered complex, suggests that the same may be true of all the higher pages of the spectral sequence. This is the main result of this paper. 

\begin{theorem}
For $k\ge 2$, the $E_k$-page of Dowlin's spectral sequence does not depend on the diagram used to construct the filtered complex, and is thus a link invariant. 
\end{theorem}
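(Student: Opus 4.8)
The plan is to realize Dowlin's filtered complex from a grid diagram $\gridg$ of the link and to check that each elementary grid move induces a filtered chain map which is an isomorphism on $E_2$-pages; Cromwell's theorem then forces the $E_k$-page ($k \ge 2$) to be independent of $\gridg$. The algebraic mechanism is the standard observation that if $f\colon (C,\filt) \to (C',\filt)$ is a morphism of bounded filtered complexes inducing an isomorphism on $E_{r_0}$-pages, then --- since $E_{r+1}$ is the homology of $(E_r,d_r)$ and the morphism induced by $f$ on $E_{r+1}$ is the homology of the morphism it induces on $E_r$ --- an immediate induction shows $f$ induces an isomorphism on $E_r$ for all $r \ge r_0$. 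It therefore suffices to exhibit, for each commutation and each (de)stabilization relating two grid diagrams, a filtered chain map between the associated Dowlin complexes that is an $E_2$-isomorphism; composing these along a sequence of moves connecting two arbitrary diagrams (and inverting, at the level of the relevant page, those traversed backwards) produces the desired isomorphism of $E_k$-pages for every $k \ge 2$. I would stress at the outset that the two complexes cannot be filtered homotopy equivalent --- the $E_1$-page carries one summand for each resolution of $\gridg$ and hence sees the grid number --- which is precisely why the theorem starts at $k = 2$ and why one must invoke the "advance the starting page" form of the lemma rather than a plain quasi-isomorphism.

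For a commutation the grid number is unchanged, and I would take the comparison map to be the pentagon-counting map that implements commutation invariance in grid homology, applied verbatim to Dowlin's complex. The content here is bookkeeping: one checks that every contributing pentagon (and every hexagon appearing in the homotopies that witness invertibility of this map) respects Dowlin's filtration, i.e.\ contributes at non-positive filtration level. Granting this, the commutation map is an honest filtered homotopy equivalence, hence an $E_r$-isomorphism for all $r$ and in particular for $r = 2$.

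The substantive case is stabilization, where the grid number jumps and no filtered homotopy equivalence is available. Here I would realize the stabilized complex as an iterated filtered mapping cone and reduce it by \emph{filtered} Gaussian elimination; the essential quantitative claim is that each differential component cancelled in the reduction raises the filtration by at most one. Filtered Gaussian elimination along such an arrow changes the $E_0$- and $E_1$-pages but preserves every $E_r$-page with $r \ge 2$, so the reduced complex --- which one identifies, up to a genuine filtered homotopy equivalence, with the Dowlin complex of the destabilized diagram --- has the same higher pages. Controlling these filtration jumps, and beforehand verifying that Dowlin's filtration is arranged compatibly with a grid stabilization so that the (appropriately refined) destabilization map is defined on the nose, is the main obstacle, and should require working through the several stabilization types case by case.

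With commutation and stabilization established, Cromwell's theorem together with the algebraic lemma yields invariance of the $E_k$-page for all $k \ge 2$. For $k = 2$ this recovers Dowlin's identification with $\Khred(L)$ and for $k = \infty$ the identification with $\HFKhat(m(L))$, so the new content is exactly the invariance of the intermediate pages $3 \le k < \infty$.
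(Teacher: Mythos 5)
There is a genuine gap, and it lies at the very first step: Dowlin's filtered complex is not built from a grid diagram, and it carries no pentagon- or hexagon-counting structure to which the grid-homology commutation maps could be applied ``verbatim.'' The complex $\C(D)$ is defined purely algebraically from a partially singular braid diagram $D$ via a cube of resolutions, with summands $Q(D_I)\tensor\ldplus$ (quotient rings tensored with matrix factorizations) and edge maps given by multiplication by $1$ or $U_b-U_c$; the filtration is the cube (weight) filtration on the crossings of $D$. Moreover the complex is only defined, and only computes $\HFKhat$, for diagrams in the class $\dr$ satisfying a regular-sequence condition, which is arranged by using the special braid-form diagrams $I_n(\beta)$. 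Arbitrary grid diagrams and Cromwell moves do not live in this framework: you would first have to construct a cube-of-resolutions complex from a grid presentation, verify the regularity condition for every resolution, and prove it agrees with Dowlin's spectral sequence --- none of which is addressed, and it is precisely to avoid this that the invariance proof is run through Markov's theorem (Reidemeister II, Reidemeister III, stabilization, conjugation of braids) rather than Cromwell's theorem. Consequently the two substantive cases you isolate (commutation via pentagon maps; stabilization via a filtration-controlled destabilization map) are not steps that can be carried out on $\C(D)$ as it stands; the actual work consists of MOY I/II/III decompositions of the resolved diagrams, a vertex-relabeling equivalence (needed for conjugation, where fixed and free vertices must be exchanged), and filtered Gaussian elimination in the cube, producing $E_1$-quasi-isomorphisms, i.e.\ maps inducing isomorphisms on $E_2$.

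Two smaller points. First, your algebraic reduction --- a filtered map inducing an isomorphism on $E_{r_0}$ induces isomorphisms on $E_r$ for all $r\ge r_0$ --- is correct and is exactly how the $E_2$-level statements propagate to all higher pages, so that part of the outline is sound. Second, your assertion that cancelling a differential component which raises filtration by at most one ``preserves every $E_r$-page with $r\ge 2$'' is essentially the content of the $\cone_1$/$E_1$-quasi-isomorphism formalism plus Gaussian elimination, but it is not a citable off-the-shelf fact in the generality you invoke; it requires the bookkeeping of which arrows have filtration degree $0$ versus $1$ (this is the reason for the two mapping cones $\cone$ and $\cone_1$), and in your grid setting you have no analogue of the MOY decompositions that actually exhibit the isomorphic summands to cancel. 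As written, the proposal defers all of the genuine content to steps that are not available for the complex in question.
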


This provides a family of link invariants $\{E_k(L)\}_{k=2}^{\infty}$. The invariance of these higher pages of the Dowlin spectral sequence helps us further decipher the connection between Khovanov homology and knot Floer homology. It also suggests several future directions. 

The first is to find knots (or families of knots) which have the same Khovanov homology and knot Floer homology, but are distinguished by these higher page invariants. This is work in progress. A second direction is to consider implications in the study of transverse links. In \cite{olga}, Plamenevskaya identifies an invariant of transverse links $\psi(L)\in\Kh(L)$. Studying the effects of our invariance maps on this class in Khovanov homology, which is isomorphic to the $E_2$ page of the Dowlin spectral sequence, one could hope to define a countable family of transverse link invariants $\{\psi_k(L)\}_{k=2}^\infty$ by taking the image of $\psi$ on each page $E_k$ of Dowlin's spectral sequence, in the style of \cite{baldwinhigherinvariants}. It might prove interesting to compare these invariants, especially the image of $\psi$ on the $E_\infty$ page $\HFKhat(m(L))$ with known transverse link invariants \cite{braidgridloss}. A third direction for future work would be to perform a similar construction as that of the $s$ invariant in Khovanov homology \cite{sinvariant} and the $\tau$ invariant in knot Floer homology \cite{tauinvariant}.

\subsection*{Organization}
In \cref{sec:ctwominus} we review the construction of the filtered complex $\C$ inducing the spectral sequence from $\Khred(L)$ to $\HFKhat(m(L))$ for a link $L$, as originally defined by Dowlin in \cite{nate}. In \cref{sec:vertex-relabeling}, we prove that the homotopy type of this complex is invariant under a diagrammatic change called ``relabeling vertices''. In \cref{sec:moy-moves}, we define filtered chain maps between filtered complexes associated to diagrams separated by MOY moves, after recalling what these are. With these in hand, we prove invariance of the higher pages of the spectral sequence in \cref{sec:invariance}.

\subsection*{Conventions}
There are a few homological algebra conventions that we need to establish.

\begin{itemize}
    \item We will call our complexes chain complexes, despite the fact that our differentials will usually have degree $1$ with respect to the homological grading.
    \item Our filtrations will be \emph{descending}, which is to say that $\filt_i M \supseteq \filt_j M$ when $i \le j$.
    \item A \emph{filtered quasi-isomorphism} $f: A \to B$ is a filtered chain map which induces a quasi-isomorphism between the associated graded complexes $\gr(f): \gr(A) \to \gr(B)$. In other words, a filtered quasi-isomorphism induces a quasi-isomorphism between $E_0$-pages of spectral sequences, and equivalently induces isomorphisms between $E_1$-pages. If $A$ and $B$ are connected by a zig-zag of filtered quasi-isomorphisms, then they have the same weak filtered homotopy type, a relationship which we denote by $A \homotopic B$.
    \item Because the $E_1$-page of the filtered complex $\C$ is isomorphic to the Khovanov \emph{complex}, and not the Khovanov \emph{homology}, we will need to work with invariance maps which are not filtered quasi-isomorphisms. Instead, they only induce quasi-isomorphisms on the $E_1$-pages, or equivalently induce isomorphisms on the $E_2$-pages. We will call these maps \emph{$E_1$-quasi-isomorphisms} (terminology from \cite{cirici2019}). As above, we write $A \homotopic_1 B$ to denote that $A$ and $B$ are connected by a zig-zag of $E_1$-quasi-isomorphisms.
    \item Since we will be working with two different notions of weak equivalence, we also need two different mapping cones for a filtered map $f: A \to B$, denoted $\cone(f)$ and $\cone_1(f)$. Both of them will have the same underlying unfiltered complex, but will differ in the definition of the filtration. The former filtration is defined to be $\filt_i(\cone(f)) = \filt_i A \directsum \filt_i B$, whereas the latter filtration is given by $\filt_i(\cone_1(f)) = \filt_i A \directsum \filt_{i-1} B$.
\end{itemize}

\subsection*{Acknowledgements}
The authors thank Ina Petkova for suggesting this project, as well as providing helpful comments throughout, and thank John Baldwin and Nathan Dowlin for enlightening conversations. 
\section{The Spectral Sequence}\label{sec:ctwominus}

In this section, we review the construction of the spectral sequence from $\Khred(L)$ to $\HFKhat(L)$ for a link $L$, as originally defined by Dowlin in \cite{nate}. The spectral sequence arises from a filtered chain complex $\C(D)$ constructed from a \emph{partially singular braid diagram} $D$ associated to an unoriented link $L$. In \cref{sec:psbd}, we define these diagrams, and in \cref{sec:complex-definition} we associate a filtered chain complex to each such diagram. Finally, in \cref{sec:reidemeister-theorem}, we discuss how to associate a partially singular braid diagram to an unoriented link $L$, and we characterize the set of moves connecting any two such partially singular braid diagrams.

\subsection{Partially singular braid diagrams}\label{sec:psbd}

In this section we will define the types of diagrams we will need to construct the spectral sequence. We start by establishing some conventions regarding braid diagrams. If $D$ is a closed braid diagram, we can consider it as a $4$-valent graph embedded in $\R^2$ with vertices $V(D)$ the set of crossings, and edges $E(D)$ the set of arcs connecting these crossings. This agrees with the usual way of representing link diagrams as graphs. Given a graph $G$, recall that a \emph{subdivision} $H$ of $G$ is a graph obtained by adding $2$-valent vertices along edges of $G$.

\begin{definition}
A \emph{(closed) partially singular braid diagram} is an oriented graph embedded in $\R^2$ which can be obtained as a subdivision of a closed braid diagram, equipped with the following extra information:
\begin{itemize}
    \item a labeling of every $4$-valent vertex as ``positive'', ``negative'', or ``singular'',
    \item a further labeling of every singular vertex as either ``fixed'' or ``free'', and
    \item exactly one distinguished edge, which is called the ``decorated'' edge.
\end{itemize}
\end{definition}

An \emph{open} partially singular braid diagram is defined identically to a closed one, except that it also has $2n$ $1$-valent vertices (assuming $n$ strands) corresponding to the endpoints of the strands. When drawing partially singular braid diagrams, we indicate fixed singular vertices by drawing a circle around them, as in \cref{fig:vertex-types}; $2$-valent vertices are drawn simply as dots on the strands, and the decorated edge is denoted by two small lines, as in \cref{fig:other-diagram-features}.

\begin{figure}[ht]
    \centering
    \begin{tabularx}{0.8\textwidth}{YYYY}
        \includegraphics{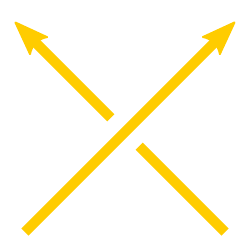} &
        \includegraphics{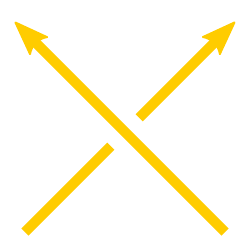} &
        \includegraphics{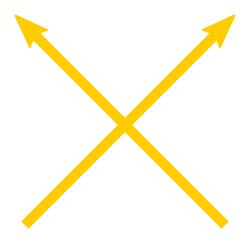} &
        \includegraphics{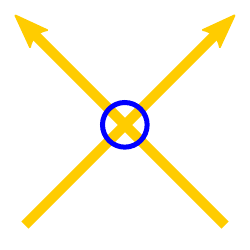} \\
        Positive & Negative & Free & Fixed
    \end{tabularx}
    \caption{The different types of vertices in a partially singular braid diagram.}
    \label{fig:vertex-types}
\end{figure}

\begin{figure}[ht]
    \centering
    \begin{tabularx}{0.8\textwidth}{YYYY}
        & \includegraphics{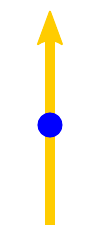} &
        \includegraphics{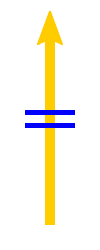} & \\
        & Bivalent vertex & Decorated edge &
    \end{tabularx}
    \caption{Other features that can occur in a braid diagram.}
    \label{fig:other-diagram-features}
\end{figure}

Throughout, we assume the decorated edge is leftmost in the diagram. We also assume a fixed ordering of the vertices whenever we consider a partially singular braid diagram $D$. We let $\Fixed(D)$ denote the set of fixed singular vertices of $D$ and $\Free(D)$ denote the set of free singular vertices of $D$. 

\begin{figure}[ht]
    \centering
    \begin{tikzcd}[row sep=0]
    & \includegraphics{diagrams/positive-crossing.pdf} \ar[dl, start anchor=real west, end anchor=north, "0"'] \ar[dr, start anchor=real east, end anchor=north, "1"] & \\
    \includegraphics{diagrams/free-vertex.pdf} & & \includegraphics{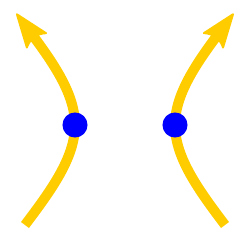} \\
    & \includegraphics{diagrams/negative-crossing.pdf} \ar[ul,start anchor=real west, end anchor=south,"1"] \ar[ur,start anchor=real east, end anchor=south,"0"'] &
    \end{tikzcd}
    \caption{The $0$- and $1$-resolutions of positive and negative crossings.}
    \label{fig:crossing-resolutions}
\end{figure}

A \emph{(fully) singular braid diagram} is a partially singular braid diagram with no crossings. It may arise from resolving a partially singular braid diagram $D$, in the following sense. Let $D$ be a partially singular braid diagram, with $c(D)$ the set of crossings of $D$. Then a \emph{resolution}, a function $I:c(D)\to \{0,1\}$, gives a fully singular braid diagram $D_I$ by resolving each crossing according to \cref{fig:crossing-resolutions}. In words, the $0$-resolution of a positive crossing is a singular vertex, and the $1$-resolution is the oriented smoothing with two subdivided edges. The $0$- and $1$-resolutions of a negative crossing are the $1$- and $0$-resolutions of a positive crossing, respectively. If a fully singular braid diagram $S$ arises as a complete resolution of a partially singular braid diagram $D$, then $\Fixed(S) = \Fixed(D)$, and $\Free(S)$ contains all crossings in $\Free(D)$ as well as those which were singularized in the resolution.


\subsection{The filtered complex \texorpdfstring{$\C(D)$}{C2-(D)}}\label{sec:complex-definition}

In this section, we recall Dowlin's construction of the filtered chain complex $\C(D)$ which gives rise to the spectral sequence connecting Khovanov homology to knot Floer homology.
Throughout, let $D$ be a partially singular braid diagram and $I$ a resolution of $D$ giving rise to the fully singular braid diagram $D_I$. We will first construct $\C(D_I)$ for each resolution $I$, then combine these into a cube complex $\C(D)$ by adding ``edge maps''. 

\begin{figure}[ht]
    \centering
\begingroup%
  \makeatletter%
  \providecommand\color[2][]{%
    \errmessage{(Inkscape) Color is used for the text in Inkscape, but the package 'color.sty' is not loaded}%
    \renewcommand\color[2][]{}%
  }%
  \providecommand\transparent[1]{%
    \errmessage{(Inkscape) Transparency is used (non-zero) for the text in Inkscape, but the package 'transparent.sty' is not loaded}%
    \renewcommand\transparent[1]{}%
  }%
  \providecommand\rotatebox[2]{#2}%
  \newcommand*\fsize{\dimexpr\f@size pt\relax}%
  \newcommand*\lineheight[1]{\fontsize{\fsize}{#1\fsize}\selectfont}%
  \ifx\svgwidth\undefined%
    \setlength{\unitlength}{86.40000343bp}%
    \ifx\svgscale\undefined%
      \relax%
    \else%
      \setlength{\unitlength}{\unitlength * \real{\svgscale}}%
    \fi%
  \else%
    \setlength{\unitlength}{\svgwidth}%
  \fi%
  \global\let\svgwidth\undefined%
  \global\let\svgscale\undefined%
  \makeatother%
  \begin{picture}(1,1)%
    \lineheight{1}%
    \setlength\tabcolsep{0pt}%
    \put(0,0){\includegraphics[width=\unitlength,page=1]{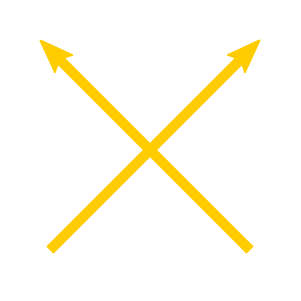}}%
    \put(0.04166667,0.66666666){\makebox(0,0)[lt]{\lineheight{1.25}\smash{\begin{tabular}[t]{l}$a$\end{tabular}}}}%
    \put(0.87500002,0.66666666){\makebox(0,0)[lt]{\lineheight{1.25}\smash{\begin{tabular}[t]{l}$b$\end{tabular}}}}%
    \put(0.04166667,0.25000002){\makebox(0,0)[lt]{\lineheight{1.25}\smash{\begin{tabular}[t]{l}$c$\end{tabular}}}}%
    \put(0.87500002,0.25000002){\makebox(0,0)[lt]{\lineheight{1.25}\smash{\begin{tabular}[t]{l}$d$\end{tabular}}}}%
  \end{picture}%
\endgroup%

    \caption{The local edge labels around a vertex.}
    \label{fig:edge-labels}
\end{figure}

To begin, label each edge of $D$ by a unique integer from $1$ to $k=|E(D)|$, and let $R(D) = \Q[U_1,\dots,U_k]$ be the polynomial ring over $\Q$ generated by one variable for each edge. Note that, whenever crossings in $D$ are resolved to get a diagram $D'$, there is a natural bijection between edges in $D$ and edges in $D'$, so we can extend our edge labels to any resolution of $D$. To each vertex $v\in V(D)$ we associate two polynomials, $L(v)$ and $L^+(v)$. Label the adjacent edges to each vertex $v\in V(D)$ as in \cref{fig:edge-labels}; if we draw the vertex such that all edges are oriented upwards, then we label the edge in the top left by $a$, the remaining edges by $b$, $c$, and $d$ as we traverse clockwise from the edge labeled $a$. Define $L(v) = U_a + U_b - U_c - U_d$ and $L^+(v) = U_a + U_b + U_c + U_d$.

One factor of $\C(D_I)$ will depend not on the specific resolution but only on $D$; we denote this factor $\ldplus$. Let
\begin{equation*}
    \ldplus \defeq \bigtensor_{v \in \Fixed(D)} \shortfact{R(D)}{L(v)}{R(D)}{L^+(v)} \,.
\end{equation*}

It should be noted that $\ldplus$ is not a chain complex, but rather a \emph{matrix factorization} (or \emph{curved complex}). A \emph{matrix factorization} is a module $M$ equipped with an endomorphism $\partial:M\to M$ such that $\partial^2=\omega\id_M$ for some potentially-nonzero scalar $\omega$, which is called the \emph{potential} of the matrix factorization. Despite the fact that $\partial$ does not square to zero, we still find occasion to refer to it as a \emph{differential} on $M$; this will be clear from context. In the case of $\ldplus$, $\omega = \sum_{v \in \Fixed(D)} L(v) L^+(v)$, which is often nonzero in $R(D)$. Matrix factorizations are well-studied algebraic objects, but for our purposes we will only need a few facts about them which will be found in \cref{sec:vertex-relabeling}.

The factor of $\C(D_I)$ which is dependent on the specific resolution is the $R(D)$-module $Q(D_I)=R(D)/(L(D_I)+N(D_I))$, where $L(D_I)$ and $N(D_I$) are two ideals of $R(D)$. The first of these is the \textit{linear ideal} $L(D_I)$, defined as
\begin{equation*}
    L(D_I) \defeq \sum_{v \in \Free(D_I)} (L(v)) \,.
\end{equation*}

The second is the \textit{nonlocal ideal} $N(D_I)$. Let $\Omega$ be a smoothly-embedded disk in $\R^2$ that does not contain the decorated edge, and such that the boundary only intersects $D$ transversely at edges. Let $\In(\Omega)$ (resp.\ $\Out(\Omega)$) denote the set of edges that intersect the boundary of $\Omega$ and are oriented inward (resp.\ outward). We define $N(\Omega)$ to be the polynomial
\begin{equation*}
    N(\Omega) \defeq \prod_{i \in \Out(\Omega)} U_i - \prod_{j \in \In(\Omega)} U_j \,.
\end{equation*}

The nonlocal ideal $N(D_I)$ is then generated by $N(\Omega)$ for all such embedded disks $\Omega$:
\begin{equation*}
    N(D_I) \defeq \sum_{\Omega} (N(\Omega)) \,.
\end{equation*}

With the above definitions in hand, the complex $\C(D_I)$ is then defined as
\begin{align*}
    \C(D_I) &\defeq Q(D_I) \tensor \ldplus \\
    &\defeq R(D)/(L(D_I)+N(D_I))\otimes \left( \bigtensor_{v \in \Fixed(D)} \shortfact{R(D)}{L(v)}{R(D)}{L^+(v)} \right)\,.
\end{align*}

It is shown in \cite[Lemma 2.4]{nate} that the potential $\omega$ of $\ldplus$ is contained in $L(D_I)+N(D_I)$, and thus is zero in $Q(D_I)$. Thus, the endomorphism of $\C(D_I)$ induced by $\ldplus$ squares to 0, so it is truly a differential; we denote it $d_0$.

As a module, define
\begin{equation*}
    \C(D) \defeq \bigdirectsum_{I \in \{0,1\}^{c(D)}} \C(D_I) \,.
\end{equation*}
The differential on $\C(D)$ will be defined as a sum $d_0 + d_1$, where $d_0$ is induced by the differential $d_0$ on the summands $\C(D_I)$, and $d_1$ is induced by edge maps that we have yet to define. In order to do so, we must first restrict the set of partially-singular braid diagrams we are working with.

\begin{definition}[{\cite[Definition 2.2]{nate}}]
The set $\dr$ contains all partially-singular braid diagrams $D$ satisfying the following conditions for all $I \in \{0,1\}^{c(D)}$:
\begin{itemize}
    \item $D_I$ is connected, and
    \item the linear terms $L(v)$ for $v \in \Free(D_I)$ form a regular sequence\footnote{The $\mathcal{R}$ in $\dr$ likely stands for ``regular''.} over $R(D)/N(D_I)$.
\end{itemize}
\end{definition}
The latter condition is an algebraic restriction which will be used in the proof of \cref{thm:vertex-relabeling}. It is equivalent to the existence of an ordering $v_1, \dots, v_k$ of the vertices in $\Free(D_I)$ such that $L(v_j)$ is not a zero divisor in $R(D)/(N(D_I) + (L(v_1),\dots,L(v_{j-1})))$ for each $1 \le j \le k$. Since $R(D)$ is a graded ring and the linear terms $L(v)$ are homogeneous of positive degree, if this condition is true for one ordering of $\Free(D_I)$, it is true for \textit{every} ordering.

For the rest of the definition of $\C(D)$, we will assume $D\in\dr$. Let $I$ and $J$ be two resolutions with $I \coveredby J$, i.e.\ $I$ and $J$ agree on all crossings except a single $c \in c(D)$, where $I(c) = 0$ and $J(c) = 1$. Let $v$ be the vertex corresponding to $c$, and label the edges adjacent to $v$ according to \cref{fig:edge-labels}.

The edge map $d_{I,J}:\C(D_I)\to \C(D_J)$ depends on whether $c$ is a positive or negative crossing. If $I$ and $J$ differ at a positive crossing, let $\oldphi_+: Q(D_I) \to Q(D_J)$ be the unique $R(D)$-module map such that $\oldphi_+(1)=1$, and define the edge map $d_{I,J}: \C(D_I) \to \C(D_J)$ to be $d_{I,J} = \oldphi_+ \tensor \id_{\ldplus}$. Else, $I$ and $J$ differ at a negative crossing $v$. In this case, let $\oldphi_-: Q(D_I) \to Q(D_J)$ be the unique $R(D)$-module map such that $\oldphi_-(1) = U_b - U_c$, and define the edge map $d_{I,J}: \C(D_I) \to \C(D_J)$ to be $d_{I,J} = \oldphi_- \tensor \id_{\ldplus}$. We may occasionally overload notation by referring to the edge map $d_{I,J}$ as $\phi_{\pm}$ when there is no risk of confusion.

Combining all of these maps together into a single map induces $d_1: \C(D) \to \C(D)$, given by
\begin{equation*}
    d_1 \defeq \sum_{I \coveredby J} \epsilon(I,J) d_{I,J} \,.
\end{equation*}
Here, $\epsilon(I,J)$ is a \emph{sign assignment}, which is a labeling of the edges of the cube of resolutions by $\{\pm 1\}$ satisfying the property that every square face has an odd number of $-1$-labeled edges. Such a sign assignment ensures that $(d_1)^2=0$, and any two choices of $\epsilon$ result in isomorphic complexes. As one example, we may let $\epsilon(I,J) = (-1)^k$, where $k$ is the number of $1$'s that come before the place at which $I$ and $J$ differ, as in \cite{barnatan2002}. We will further abuse notation by referring to $d_1$, the signed sum of the edge maps $d_{I,J}$ for all $I\coveredby J$, as itself an edge map.  

Consider $\C(D)$ as a chain complex with total differential $d_0 + d_1$. We filter $\C(D)$ by weight in the cube of resolutions, i.e.\ the filtration on $\C(D)$ is given by
\begin{equation*}
    \filt_p \C(D) \defeq \bigdirectsum_{w(I) \ge p} \C(D_I) \,,
\end{equation*}
where $w(I)=\sum_{c\in c(D)} I(c)$ is the \textit{weight} of $I$, i.e.\ the number of $1$-resolved crossings of $D_I$. Note that $d_0$ preserves the weight, and $d_1$ increases it by $1$, so the differential on $\C(D)$ is indeed filtered with respect to this decomposition.

\begin{remark}
We could have alternately defined $\C(D)$ by first defining $\C(S)$ for fully singular braid diagrams $S$, then defining $\C(D)$ to be the mapping cone
\begin{equation*}
\C(D) \defeq \cone_1(\oldphi \tensor \ldplus) = \left(\C(D_0) \to \C(D_1)\right) \,,
\end{equation*}
where $D_0$ and $D_1$ above are the $0$- and $1$-resolutions of a particular crossing, and $\oldphi: Q(D_0) \to Q(D_1)$ is the associated map of quotient modules. Iterating this construction produces a filtered complex that is isomorphic to the one that we defined previously.
\end{remark}


\subsection{Diagrams associated to a link}\label{sec:reidemeister-theorem}

Each partially singular braid diagram gives rise to an unoriented link by taking the \emph{unoriented smoothing.} 

\begin{definition}
Let $D$ be a partially singular braid diagram. The \emph{unoriented smoothing $\smoothing(D)$} is the unoriented link obtained from $D$ by smoothing each singular vertex in the way that does not respect the orientation. 
\end{definition}

\begin{figure}[ht]
    \centering
    \begin{tikzcd}
    \includegraphics[align=c]{diagrams/free-vertex.pdf} \ar[r,"\sm"] & \includegraphics[align=c]{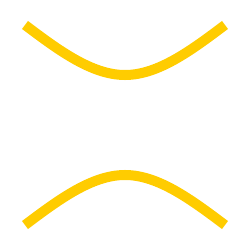}
    \end{tikzcd}
    \caption{Unoriented smoothing of a crossing.}
    \label{fig:unoriented-smoothing}
\end{figure}

\cref{fig:unoriented-smoothing} shows a local picture of smoothing a singular vertex, and \cref{fig:diagram-smoothing} gives an example of a partially singular braid diagram and the link obtained by taking the unoriented smoothing. 

\begin{figure}[ht]
    \centering
    \begin{tikzcd}
    \includegraphics[width=0.2\textwidth, align=c]{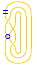} \ar[r,"\sm"] & \includegraphics[width=0.2\textwidth, align=c]{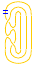}
    \end{tikzcd}
    \caption{A diagram $D$ and its unoriented smoothing $\smoothing(D)$.}
    \label{fig:diagram-smoothing}
\end{figure}

When $\sm(D)$ is an $\ell$-component link, we can construct a ``reduced'' version of $\C(D)$. First, choose a set of edges $e_1, \dots, e_\ell \in E(D)$ such that each $e_i$ is on a distinct component of $\sm(D)$. Then, let
\begin{equation*}
    \ctwohat(D) \defeq \C(D) \tensor \bigtensor_{e_i} \left(R(D) \xrightarrow{U_{e_i}} R(D) \right) \,.
\end{equation*}
We define the differentials given by multiplication by $U_{e_i}$ to have weight filtration degree $1$. Therefore, we get a weight filtration on $\ctwohat(D)$ induced by the above definition as a tensor product of filtered complexes. This is the filtered complex that is used to define the spectral sequence relating Khovanov homology and knot Floer homology.

\begin{theorem}[{\cite[Theorem 1.6]{nate}}]
Let $D \in \dr$ be a partially singular braid diagram with $\sm(D) = L$. The spectral sequence induced by the weight filtration on $\ctwohat(D)$ has $E_2$-page isomorphic to $\Khred(L)$ and converges to $\HFKhat(L)$.
\end{theorem}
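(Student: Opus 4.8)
The plan is to analyze the weight spectral sequence of $\ctwohat(D)$ page by page, as in \cite{nate}. Since the cube of resolutions $\{0,1\}^{c(D)}$ is finite and each reduction factor $R(D)\xrightarrow{U_{e_i}}R(D)$ contributes only a two-step filtration, the weight filtration on $\ctwohat(D)$ is bounded; hence the spectral sequence converges automatically, and $E_\infty$ is the associated graded of the total homology $H_*(\ctwohat(D),d_0+d_1)$. So it suffices to establish two independent statements: (i) $E_2\cong\Khred(L)$, and (ii) $H_*(\ctwohat(D),d_0+d_1)\cong\HFKhat(L)$.

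For (i), first observe that the $E_0$-page is $\gr\ctwohat(D)$ with differential $d_0$; since $d_0$ preserves weight and does not touch the reduction factors, computing the $E_1$-page reduces to computing each $H_*(\C(D_I),d_0)$ for $I\in\{0,1\}^{c(D)}$ and tensoring in the (at this stage differential-free) reduction factors. The technical heart is this homology of the matrix factorization $\C(D_I)=Q(D_I)\tensor\ldplus$, whose differential genuinely squares to zero by \cite[Lemma 2.4]{nate}. This is exactly where the hypothesis $D\in\dr$ enters: because the linear terms $L(v)$ for $v\in\Free(D_I)$ form a regular sequence over $R(D)/N(D_I)$, the quotient $Q(D_I)$ is presented by a Koszul complex on those terms, and a homological-algebra computation then shows $H_*(\C(D_I),d_0)$ is concentrated in a single homological degree. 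Next comes a combinatorial dictionary: each $I$ is a vertex of the Khovanov cube of resolutions of $L=\sm(D)$, and $\sm(D_I)$ is the corresponding complete resolution into circles; using this, one identifies the $E_1$-page together with its differential $d_1$ with the reduced Khovanov complex $\widetilde{\CKh}(L)$. The last identification requires a local check that, after passing to $d_0$-homology, each edge map $\phi_+$ (resp.\ $\phi_-$) becomes the appropriate merge/split map and that $\epsilon(I,J)$ reproduces the Khovanov signs. Then $E_2=H_*(E_1,d_1)=H_*(\widetilde{\CKh}(L))=\Khred(L)$.

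For (ii), the plan is to identify the total homology with knot Floer homology, following \cite{nate}: one shows $\ctwohat(D)$ with total differential $d_0+d_1$ is quasi-isomorphic to a cube-of-resolutions complex computing $\HFKhat(L)$. This is natural because Dowlin's construction is built to mirror such a model --- at each vertex one has a polynomial ring modulo a linear ideal $L(D_I)$ and a nonlocal ideal $N(D_I)$, glued by the maps $\phi_\pm$. The steps that require care are: handling the matrix-factorization factors $\ldplus$ at the fixed vertices, using the regular-sequence condition again to show that tensoring them in contributes only the expected grading shift; tracking the reduction factors $R(D)\xrightarrow{U_{e_i}}R(D)$, which produce the ``hat'' flavor of $\HFK$; and matching the Maslov and Alexander gradings of $\HFKhat(L)$ with the homological and internal $q$-gradings on $\ctwohat(D)$. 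Given this quasi-isomorphism and the bounded filtration, the spectral sequence converges to $\HFKhat(L)$.

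I expect the main obstacle to be the single-degree concentration of $H_*(\C(D_I),d_0)$ together with the identification of $(E_1,d_1)$ with the reduced Khovanov complex: this is the step that genuinely exploits $D\in\dr$, and getting the size, the internal gradings, and the behavior of all the edge maps right simultaneously is the delicate part. The convergence half is conceptually governed by the parallel between Dowlin's quotient rings and a cube-of-resolutions description of $\HFKhat$, but disentangling the $\ldplus$ factors in that comparison is itself substantial. Once both identifications are in place, nothing more is needed: $\Khred(L)$ and $\HFKhat(L)$ are link invariants, so the theorem follows.
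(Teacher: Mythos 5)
This statement is not proved in the paper at all: it is Dowlin's Theorem 1.6, imported by citation, so there is no in-paper argument to compare yours against. Judged on its own, your proposal correctly reproduces the architecture of Dowlin's proof (bounded weight filtration, hence convergence; computation of the $E_1$-page from the $d_0$-homology of each $\C(D_I)$ using the regular-sequence condition defining $\dr$; identification of the resulting cube with the Khovanov cube of $\sm(D)$; identification of the total homology with knot Floer homology), but as written it is a roadmap rather than a proof. The three steps you flag as ``the technical heart'' --- concentration of $H_*(\C(D_I),d_0)$ in a single degree, the matching of the edge maps $\oldphi_\pm$ and signs with the reduced Khovanov merge/split maps, and above all the quasi-isomorphism between $(\ctwohat(D),d_0+d_1)$ and a complex computing $\HFKhat$ --- are precisely the content of Dowlin's paper, occupying most of it (the last step goes through a cube-of-resolutions model for knot Floer homology and a delicate treatment of the $\ldplus$ factors and the nonlocal ideal), and your sketch asserts them rather than carries them out. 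So there is a genuine gap: the proposal defers exactly the parts that make the theorem true.

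Two smaller points to watch. First, the reduction factors $R(D)\xrightarrow{U_{e_i}}R(D)$ are declared to have weight filtration degree $1$, so they are differential-free only on the $E_0$-page; at $E_1$ their contribution enters alongside $d_1$, and it is this contribution that turns the unreduced complex $\CKh^-(\sm(D))$ into the \emph{reduced} theory, giving $E_2\iso\Khred(L)$ --- your sketch treats them as inert through the $E_1$-page, which would produce the wrong (unreduced) answer. Second, mind the mirror convention: Dowlin's spectral sequence converges to $\HFKhat(m(L))$ (as this paper's abstract states), so any comparison with a knot Floer cube-of-resolutions model must keep track of which orientation/mirror of $L$ it computes.
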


In \cite{nate}, Dowlin proves that every link can be realized as the unoriented smoothing of a diagram in $\dr$ by first considering a braid whose plat closure is the desired link, then turning that braid into a partially-singular braid diagram. We will go about things similarly, but instead choose a different way of embedding braid closures into $\dr$ that better fits our particular invariance proofs.

\begin{proposition}\label{prop:psbd-existence}
Let $L$ be an unoriented link. There is a partially singular braid diagram $D \in \dr$ such that $\smoothing(D)=L$. 
\end{proposition}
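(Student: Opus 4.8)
The plan is to construct, for an arbitrary unoriented link $L$, an explicit partially singular braid diagram $D$ with $\smoothing(D) = L$, and then verify that $D$ lies in $\dr$, i.e.\ that for every resolution $I$ the diagram $D_I$ is connected and the linear terms $\{L(v)\}_{v \in \Free(D_I)}$ form a regular sequence over $R(D)/N(D_I)$. The construction proceeds in two stages. First, present $L$ as the closure (plat or braid) of some braid $\beta$; this is possible by Alexander's theorem (or its plat-closure analogue), and it is here that one has freedom to choose the embedding of braid closures into the world of partially singular diagrams. Second, replace each crossing of $\beta$ by a \emph{free} singular vertex, subdivide edges as needed so that the result is genuinely a subdivision of a closed braid diagram, add the one decorated edge at the leftmost position, and check that taking the unoriented smoothing of every singular vertex recovers $L$ — this last point is exactly the content of \cref{fig:unoriented-smoothing} applied crossing-by-crossing, together with the fact that the unoriented smoothing of a crossing, performed consistently, reconstructs the original link diagram up to isotopy. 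One must take care with how crossings of $\beta$ relate to the ``positive/negative'' versus ``singular'' labels: the cleanest route is to start from a \emph{fully} singular diagram (all $4$-valent vertices singular and free) whose unoriented smoothing is $L$, so that there are no crossings to resolve and the $\dr$ conditions only need to be checked for the single resolution $I = \emptyset$.

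For the connectivity condition, the point is that $L$ is a link (so its braid presentation can be arranged to have connected underlying diagram), and replacing crossings by singular vertices and subdividing does not disconnect the graph; if $L$ itself is a split link one works with a connected diagram anyway, since connectivity of $D_I$ as a graph is a condition on the diagram, not on $L$. The regular-sequence condition is the substantive algebraic point. Here I would invoke the structural description given in the excerpt: it suffices to exhibit \emph{one} ordering $v_1, \dots, v_k$ of $\Free(D_I)$ with $L(v_j)$ a non-zero-divisor in $R(D)/(N(D_I) + (L(v_1),\dots,L(v_{j-1})))$, and because $R(D)$ is graded with the $L(v)$ homogeneous of degree $1$, any ordering then works. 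The natural strategy is to order the vertices compatibly with the braid structure — say, from bottom to top along the braid — so that at each stage the new linear relation $L(v_j) = U_a + U_b - U_c - U_d$ expresses one ``fresh'' edge variable (an edge appearing above $v_j$ that has not yet been involved) in terms of variables already present, modulo $N(D_I)$. Since $R(D)$ is a polynomial ring, successively quotienting by such ``new variable $=$ polynomial in old variables'' relations keeps the quotient a polynomial ring (hence a domain), so each $L(v_j)$ is a non-zero-divisor. The nonlocal ideal $N(D_I)$ only imposes further monomial-difference relations and, with the braid ordering, does not interfere with the availability of a fresh edge variable at each step.

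The main obstacle I anticipate is precisely this last verification — making rigorous the claim that one can order the singular vertices so that each $L(v_j)$ introduces a genuinely new edge variable not already killed (or made redundant) by $N(D_I)$ together with the earlier $L(v_i)$. For a braid closure there is a subtlety at the ``top'' of the braid, where the strands close up: the final batch of edge variables get identified with the bottom ones via the closure, and one must check that the corresponding relations are still part of a regular sequence. This is likely where the authors' particular choice of embedding (alluded to in the sentence ``we will go about things similarly, but instead choose a different way of embedding braid closures'') earns its keep: choosing the closure and the placement of the decorated edge carefully should make the top relations either redundant or again of ``new variable'' type. I would handle this by a careful local analysis of the edges near the closure arcs, possibly after a small isotopy of the braid to put it in a convenient form (e.g.\ so that one strand runs monotonically and carries the decorated edge), reducing the claim to the generic interior case already handled. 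A secondary, more bookkeeping-level obstacle is simply checking the subdivision axiom — that $D$ really is obtainable as a subdivision of a genuine closed braid diagram — but this is routine once the braid presentation is fixed.
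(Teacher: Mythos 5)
There is a genuine gap, and in fact two independent fatal problems with your construction. First, your ``cleanest route'' --- singularize every crossing of $\beta$ so that $D$ is fully singular with all vertices free --- cannot produce $\smoothing(D)=L$ for a nontrivial link. The unoriented smoothing of a singular vertex is a crossingless (cap--cup) resolution; it does \emph{not} ``reconstruct the original crossing.'' Since $\sm$ only creates crossings from the positive/negative vertices of $D$, a diagram with no crossings smooths to a crossingless diagram, i.e.\ an unlink. This is exactly backwards from the paper's construction $I_n(\beta)$: there the crossings of $\beta$ survive as genuine positive/negative crossings of $D$, and the singular vertices live only in an auxiliary template $I_n$ stacked above the braid (together with $n$ returning strands), arranged so that smoothing those singular vertices produces the closure arcs of $\cl(\beta)\cong L$.

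Second, even setting that aside, the all-free labeling makes the regular-sequence condition unachievable except in trivial cases, so your verification strategy cannot be repaired by more care at the closure. For a closed diagram, modulo the nonlocal ideal $N(D_I)$ the sum $\sum_v L(v)$ over all $4$-valent vertices vanishes: every edge joining two such vertices appears once with each sign, and edges separated by bivalent vertices or closure arcs are identified by two-strand nonlocal relations $U_i-U_j$. Hence if \emph{every} vertex is free, the last linear term in any ordering becomes zero (a zero divisor) in $R(D)/(N(D_I)+(L(v_1),\dots,L(v_{k-1})))$, so the $\dr$ condition fails; this is precisely why the construction requires fixed vertices ``in a sufficiently nice arrangement,'' as noted in \cref{sec:vertex-relabeling}. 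The paper sidesteps any direct inductive verification: it takes $D=I_n(\beta)$, whose $I_n$ template has two designated layers of fixed vertices, and proves $D\in\dr$ by observing that $D$ contains a vertically-mirrored copy of Dowlin's open diagram $S_{2n}$ and invoking \cite[Lemma 7.1]{nate}. Your proposed ``fresh edge variable'' induction, besides being unproved at the closure (the subtlety you yourself flag), never engages with this global linear dependence, which is the real obstruction.
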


To prove \cref{prop:psbd-existence}, we will make use of a special partially singular open braid diagram which we denote $I_n$. This open diagram $I_n$ consists of $2n$ upward oriented strands with $2n-1$ layers of singular vertices. The layers are symmetric, meaning layer $i$ has singular vertices between the same strands as layer $2n-i$ for $1\le i<n$. The first layer has a singular vertex between the strands $n$ and $n+1$. The second layer has two singular vertices; one between strands $n-1$ and $n$ and one between strands $n+1$ and $n+2$. In general, the $i^{\text{th}}$ layer has $i$ consecutive singular vertices, beginning with one between strands $n+1-i$ and $n+2-i$ and ending with one between strands $n-1+i$ and $n+i$. We let $\Fixed(I_n)$ be the singular vertices in layers $n$ and $n+1$, and let $\Free(I_n)$ be the rest of the singular vertices. See \cref{fig:i-3} for $I_n$ in the case $n=3$. 

\begin{figure}[ht]
    \centering
    \includegraphics{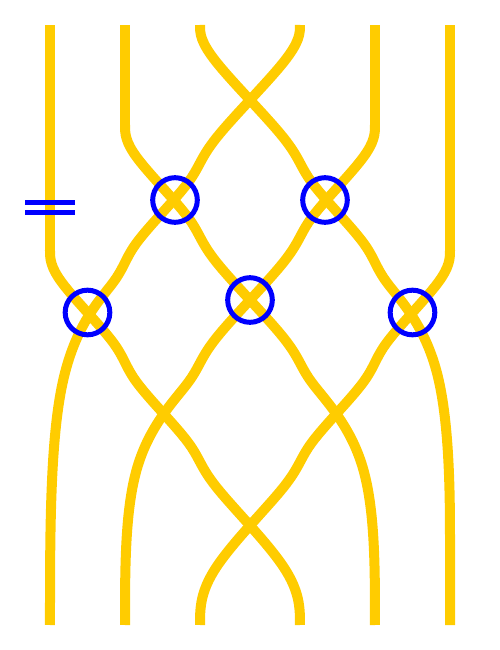}
    \caption{The partially singular braid diagram $I_n$ in the case $n=3$. }
    \label{fig:i-3}
\end{figure}

\begin{definition}\label{def:i-n-beta}
Given a braid $\beta \in B_n$, let $I_n(\beta)$ denote the partially-singular braid diagram $D$ built by putting $n$ downward-oriented strands to the right of $\beta$, and putting $I_n$ above and taking the braid closure.
\end{definition}

\begin{proof}[Proof of \cref{prop:psbd-existence}]
Given an unoriented link $L$, let $\beta$ be a braid with braid closure $\cl(\beta)$ isotopic to $L$, the existence of which is guaranteed by Alexander's Theorem \cite{alexander}. The unoriented smoothing $\sm(I_n(\beta))$ is isotopic to the braid closure $\cl(\beta)$ of $\beta$ itself, so $D=I_n(\beta)$ is a partially singular braid diagram with $\sm(D)$ isotopic to $L$. That $D\in\dr$ is an application of \cite[Lemma 7.1]{nate}. More specifically, $D$ contains a vertically-mirrored copy of the open braid diagram $S_{2n}$ defined in \cite{nate}, where it is proven that any such diagram is in $\dr$.
\end{proof}

See \cref{fig:partially singular-braid-representation} for an example of the process of constructing a partially singular braid diagram with specified unoriented smoothing. 

\begin{figure}[ht]
    \centering
    \begin{tikzcd}
    \includegraphics[align=c]{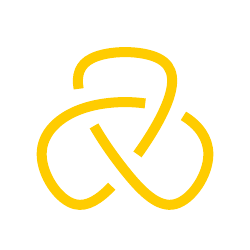} \ar[r] & \includegraphics[align=c]{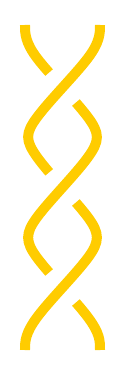} \ar[r] & \includegraphics[align=c]{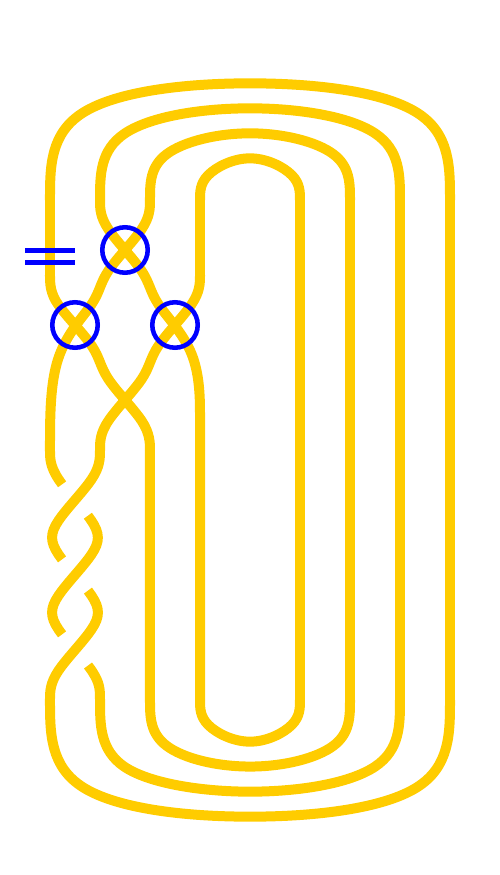}
    \end{tikzcd}
    \caption{The process of constructing a partially singular braid diagram with smoothing isotopic to a given knot.}
    \label{fig:partially singular-braid-representation}
\end{figure}

Let $\dzw = \{I_n(\beta) \mid \beta \in B_n, n \in \Z\}$ be the set of partially singular braid diagrams constructed as above\footnote{Here, the $\mathcal{B}$ in $\dzw$ stands for ``braid''.}. Then we have the following classification theorem.
\begin{theorem}\label{thm:markov}
Two diagrams in $\dzw$ have the same unoriented smoothing if and only if the underlying braids are connected by a finite sequence of Reidemeister II moves, Reidemeister III moves, (de)stabilizations, and conjugations. 
\end{theorem}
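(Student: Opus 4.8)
The plan is to reduce this to the classical Markov theorem for braid closures. Recall that Markov's theorem states that two braids (of possibly different widths) have isotopic braid closures if and only if they are related by a finite sequence of conjugations and (de)stabilizations. We want to upgrade this to a statement about the diagrams $I_n(\beta) \in \dzw$ and their unoriented smoothings. The first and most important observation, established already in the proof of \cref{prop:psbd-existence}, is that $\sm(I_n(\beta))$ is isotopic as an unoriented link to the braid closure $\cl(\beta)$. Hence the left-hand side of the biconditional --- ``$\sm(I_n(\beta)) = \sm(I_m(\gamma))$ as unoriented links'' --- is equivalent to ``$\cl(\beta)$ and $\cl(\gamma)$ are isotopic unoriented links''.

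For the ``if'' direction, I would simply check that each of the listed moves on the underlying braid --- Reidemeister~II, Reidemeister~III, (de)stabilization, conjugation --- changes $\cl(\beta)$ by an isotopy, hence changes $\sm(I_n(\beta))$ by an isotopy; this is routine since $\sm(I_n(\beta)) \cong \cl(\beta)$. (Note Reidemeister~II and III here are moves \emph{within} $B_n$, i.e.\ relations in the braid group, so they don't even change $\cl(\beta)$ up to isotopy of diagrams; I include them because in \cref{sec:moy-moves} we will want the flexibility of modifying the braid word by braid relations, and it is convenient to name them now.) The content is really in the ``only if'' direction.

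For the ``only if'' direction, suppose $\sm(I_n(\beta))$ and $\sm(I_m(\gamma))$ are isotopic. By the observation above, $\cl(\beta)$ and $\cl(\gamma)$ are isotopic as unoriented links. Now I must be slightly careful: classical Markov theorem is a statement about \emph{oriented} braid closures, whereas here we have unoriented links. However, a braid closure always carries a canonical orientation (all strands upward), and reversing the orientation of all strands of $\cl(\beta)$ yields $\cl(\beta')$ where $\beta'$ is the ``upside-down'' braid, which is conjugate-related to $\beta$ via the half-twist. Since any unoriented isotopy between $\cl(\beta)$ and $\cl(\gamma)$ either preserves or globally reverses orientation on each component, I can, after possibly composing with such a global orientation reversal (absorbed into a conjugation and braid relations), reduce to an \emph{oriented} isotopy of braid closures, and then apply Markov's theorem to conclude that $\beta$ and $\gamma$ are connected by conjugations and (de)stabilizations. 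This gives the required sequence of moves on the underlying braids.

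\textbf{Main obstacle.} The genuinely delicate point is the interplay between orientations: Markov's theorem as usually stated requires the isotopy to respect the natural braid orientation, and links can have multiple components whose orientations may be reversed independently. I expect the bulk of the careful argument to be in showing that an arbitrary unoriented isotopy $\cl(\beta) \simeq \cl(\gamma)$ can be promoted to an oriented one at the cost only of conjugation/braid-relation moves on $\beta$ --- i.e., that the ambiguity introduced by forgetting orientations is exactly accounted for by the moves we have allowed. A secondary, more bookkeeping-type obstacle is making sure that when we perform a (de)stabilization or conjugation on $\beta$, the resulting diagram is literally of the form $I_{n\pm 1}(\beta')$ or $I_n(\beta')$ for the new braid $\beta'$ --- that is, that the ``$I_n$ cap'' on top behaves correctly under these operations; this follows from the recursive/symmetric structure of $I_n$ described before \cref{def:i-n-beta}, but it should be spelled out.
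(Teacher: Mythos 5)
Your reduction is exactly the paper's: the entire printed proof of \cref{thm:markov} is the single sentence that this is Markov's theorem repackaged, resting on the observation (already made in the proof of \cref{prop:psbd-existence}) that $\sm(I_n(\beta))$ is isotopic to the braid closure $\cl(\beta)$; the Reidemeister II and III moves appear in the statement only because the diagrams are built from braid words rather than braid group elements. So in approach you and the paper agree, and your ``if'' direction is unproblematic.

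The part of your write-up that goes beyond the paper --- promoting an unoriented isotopy of closures to an oriented one --- is where the trouble lies, and your sketch of that step does not work as stated. Reversing the orientation of every strand of $\cl(\beta)$ corresponds to the \emph{reversed} braid word, not to conjugation by the half-twist: conjugating by $\Delta$ realizes the index flip $\sigma_i \mapsto \sigma_{n-i}$, which does not change the oriented closure, whereas the reversed word closes up to the orientation reverse of $\cl(\beta)$. Since conjugation, (de)stabilization, and the braid relations all preserve the \emph{oriented} isotopy class of the closure, a braid and one whose closure is the orientation reverse cannot in general be connected by the listed moves: already for a non-invertible knot the two oriented classes differ, and for multi-component links the components' orientations can be reversed independently. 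So the ``main obstacle'' you flag is real and cannot be absorbed into conjugations alone; handling it would require either tracking orientations in the statement or a separate argument. The paper, for its part, does not engage with this point at all --- its proof is just the citation of Markov --- so your proposal is not less rigorous than the printed one, but the bridging step you introduce is the one place your argument, as written, breaks.
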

\begin{proof}
This is just Markov's theorem, repackaged \cite{markov}. 
\end{proof}

Since $\dzw\subset\dr$, we can construct the complex $\C(D)$ for any $D\in\dzw$. We overload notation by writing $\C(\beta)$ instead of $\C(I_n(\beta))$ for $\beta\in B_n$. We prove invariance of $\C(\beta)$ under the moves in \cref{thm:markov} in \cref{sec:invariance} using maps defined in \cref{sec:moy-moves}. 
\section{Vertex Relabeling}\label{sec:vertex-relabeling}

Before we continue towards a proof of invariance, we detour to comment on a quirk of the construction of $\C(D)$. One natural question to ask is why $\C(D)$ treats fixed and free singular vertices differently. It turns out that, in order for $H_*(\C(D))$ to be isomorphic to $\HFKhat(\sm(D))$, our diagram $D$ needs to be in $\dr$, which means satisfying the regular sequence condition. This condition cannot be satisfied unless $D$ contains sufficiently many fixed vertices in a sufficiently nice arrangement. On the other hand, we only know how to define the edge maps $d_{I,J}$ on free vertices, so we cannot make all of our vertices fixed either.

As a sort of compromise, we choose some of our vertices to be fixed and some to be free. We will not need to worry about which choice we have made when proving invariance under Reidemeister moves II and III in \cref{sec:invariance}, since they only involve local pictures of diagrams which contain some crossings but no singular vertices. While not a local move, we define stabilization to be compatible with our vertex labeling as well. Conjugation, however, will require us to change which vertices are fixed and which are free; this is what motivates the following theorem.

While it is not immediately obvious, it turns out that the homotopy type of $\C(D)$ does not depend on the particular labeling of vertices as fixed or free in the following sense:

\begin{theorem}\label{thm:vertex-relabeling}
If $D, D' \in \dr$ are identical partially singular braid diagrams up to relabeling of fixed and free vertices, then $\C(D) \homotopic \C(D')$.
\end{theorem}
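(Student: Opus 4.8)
The plan is to reduce to the case where $D$ and $D'$ differ by changing the label of a single singular vertex $v$ from fixed to free, then to exhibit an explicit filtered homotopy equivalence $\C(D) \homotopic \C(D')$ at the level of each resolution, compatible with the edge maps. Since the two diagrams are identical away from $v$, for every resolution $I$ the modules $\C(D_I)$ and $\C(D'_I)$ are built from the same ambient ring $R = R(D) = R(D')$; the only difference is bookkeeping. In $\C(D_I)$ the vertex $v$ is fixed, so it contributes a two-step matrix factorization factor $\bigl(R \xrightarrow{L(v)} R \xrightarrow{L^+(v)} R\bigr)$ to $\Lplus$, while the ideal defining $Q(D_I)$ does not see $v$. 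In $\C(D'_I)$ the vertex $v$ is free, so it contributes no matrix-factorization factor, but instead the relation $L(v)$ is added to the linear ideal, i.e. $Q(D'_I) = Q(D_I)/(L(v))$ (using that $v$, being free in $D'_I$, is free in every resolution). So the comparison is, locally at $v$, between $N \otimes \bigl(R \xrightarrow{L(v)} R\bigr)$ (with the second factor of the matrix factorization tensored in separately) and $N/(L(v))$, where $N$ abbreviates the rest.

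The key algebraic input is the regular-sequence hypothesis: since $D' \in \dr$, the linear terms $L(w)$ for $w \in \Free(D'_I)$ form a regular sequence over $R(D'_I)/N(D'_I)$, and in particular $L(v)$ is a non-zero-divisor on the quotient of $R$ by $N(D_I)$ together with the other free linear terms — which is exactly the module $Q(D_I)$ cut down by the other free relations, i.e. the module to which the $v$-factor of $\ldplus$ is attached. Therefore the Koszul-type complex $\bigl(Q(D_I) \xrightarrow{L(v)} Q(D_I)\bigr)$ has homology concentrated in degree zero, equal to $Q(D_I)/(L(v)) = Q(D'_I)$, and the quotient map is a quasi-isomorphism. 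The first main step is to upgrade this to a genuine filtered homotopy equivalence of the full complexes $\C(D_I) \to \C(D'_I)$: one builds the contraction on the two-term factor $R \xrightarrow{L(v)} R$ over the quotient ring, using a choice of preimage witnessing that $L(v)$ is not a zero-divisor, and tensors with the identity on all the remaining factors (including the $R \xrightarrow{L^+(v)} R$ leg, whose presence one must track — here one uses that the potential $\omega$ already vanishes in $Q(D_I)$, so the $L^+(v)$ leg is also killed and matches the free picture). All of this is weight-filtered trivially, since the matrix factorization differential and these quotient maps have weight degree $0$.

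The second main step is to assemble these resolution-by-resolution equivalences into a filtered chain homotopy equivalence of the cube complexes $\C(D) \to \C(D')$. Here one must check that the maps $\phi_+ \otimes \id$ and $\phi_-\otimes\id$ (which are $R$-module maps determined by where they send $1$) commute — up to the prescribed homotopies — with the projections $Q(D_I) \to Q(D'_I)$; this is automatic because all maps in sight are $R$-linear and send $1 \mapsto 1$ (resp. $1 \mapsto U_b - U_c$), so they descend compatibly along the quotient by $(L(v))$, and the homotopies can be taken $R$-linear as well. A clean way to organize this is via a filtered version of the homological perturbation lemma / the standard fact that a cube of chain complexes is homotopy equivalent to the cube obtained by replacing each vertex by a deformation retract (when the retractions commute with the edge maps up to coherent homotopy), with the weight filtration preserved because every structure map and homotopy has weight degree $0$ or $1$ as before. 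Iterating over all vertices whose labels differ finishes the proof.

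\textbf{Main obstacle.} I expect the crux to be the interaction between the matrix-factorization structure and the homotopy: $\ldplus$ is a curved complex, not a chain complex, and the homotopy contracting the $L(v)$-leg of a fixed vertex must be shown to be compatible with the non-nilpotent differential $d_0$ — i.e. one needs the homotopy identity to hold only \emph{after} passing to $Q(D_I)$, where $\omega = 0$, and one must be careful that tensoring the contraction of one factor with the identity on the remaining curved factors still yields a legitimate homotopy of honest chain complexes. Verifying that the regular-sequence condition is exactly what makes this contraction exist, and that it is simultaneously a contraction of the $L^+(v)$-leg once $\omega$ is set to zero, is the technical heart; everything else (filtration compatibility, commutation with edge maps, induction over vertices) is then routine.
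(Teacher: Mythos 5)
Your setup matches the paper's: reduce to changing a single vertex $v$ from fixed to free, observe that $R(D)=R(D')$ and $Q(D'_I)=Q(D_I)/(L(v))$, and use the regular-sequence condition for $D'\in\dr$ to see that $L(v)$ is a non-zero-divisor on $Q(D_I)$. The genuine gap is precisely the step you flag as the ``main obstacle'' and then never carry out: comparing $\C(D_I)=Q(D_I)\tensor\ldplus$ with $\C(D'_I)$ across the $v$-factor of the matrix factorization. Your proposed mechanism --- build a contraction of the two-term piece $Q(D_I)\xrightarrow{L(v)}Q(D_I)$ ``using a choice of preimage witnessing that $L(v)$ is not a zero-divisor'' and tensor it with the identity on the other factors --- fails on two counts. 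First, non-zero-divisorness gives injectivity of multiplication by $L(v)$, not a splitting: the quotient map from $\bigl(Q(D_I)\xrightarrow{L(v)}Q(D_I)\bigr)$ onto $Q(D_I)/(L(v))$ is a quasi-isomorphism but in general not a homotopy equivalence over $Q(D_I)$ (compare $\Z\xrightarrow{2}\Z\to\Z/2$), so there is no contraction to tensor, and the homological-perturbation / deformation-retract assembly you invoke lacks exactly the input it needs; your claim of a ``genuine filtered homotopy equivalence'' at each resolution is an overstatement, and also unnecessary, since $\homotopic$ only requires a zig-zag of filtered quasi-isomorphisms. Second, and more fundamentally, the $v$-factor is not a chain complex on its own: its differential squares to $L(v)L^{+}(v)$, which is generally nonzero in $Q(D_I)$; only the total potential $\omega=\sum_{w\in\Fixed(D)}L(w)L^{+}(w)$ vanishes there. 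So the assertion that ``the $L^{+}(v)$ leg is also killed'' cannot be verified factor-by-factor --- the vanishing of $\omega$ is a cancellation across all fixed-vertex factors simultaneously, and your sketch gives no argument for how the $L^{+}(v)$ leg disappears.

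The paper closes exactly this gap with \cref{lemma:matrix-factorization-reduction}: the projection $\pi(c_1,c_2)=c_2\tensor 1$ from the whole factorization onto $C'\tensor R/(L(v))$ sits in a short exact sequence whose subobject is the factorization with legs $1$ and $L(v)L^{+}(v)$ over the remaining factors, and that subobject is shown to be acyclic by a change of basis that sweeps the other products $L(w)L^{+}(w)$ into a single entry equal to $\omega=0$ (the ``excluding a variable'' trick); the long exact sequence then shows $\pi$ is a quasi-isomorphism. This is a global argument on the full matrix factorization, not a factor-wise contraction, and it is the technical heart your proposal leaves unproved. Two smaller remarks: your observation that the quotient maps are $R(D)$-linear and hence commute \emph{strictly} with the edge maps is correct and already suffices, via \cref{lemma:cone-components}, to promote the vertexwise quasi-isomorphisms to a filtered quasi-isomorphism of cubes --- no perturbation lemma is needed; and when you ``iterate over all vertices whose labels differ'' you must keep every intermediate diagram in $\dr$, which is not automatic if you flip labels in an arbitrary order (an intermediate free set need not lie inside $\Free(D')$ or $\Free(D'')$). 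Either perform all free-to-fixed changes first, or, as the paper does, route the zig-zag through the diagram with $\Fixed=\Fixed(D')\union\Fixed(D'')$, whose free linear terms form a subset of a regular sequence and hence remain regular.
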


To prove this, we need to introduce a slight variation on the technique of ``excluding a variable'' from \cite[Lemma 3.8]{rasmussen2006} or \cite[Proposition 9]{khovanov-rozansky2004}. Both sources are also good references for the relevant details on matrix factorizations, including the statement below on the effect of change of basis on matrix factorizations. 

We include the necessary details on matrix factorizations below. Let $R$ be a ring. For $a,b\in R$, let $\{a,b\}$ denote the matrix factorization
\begin{equation*}
    \{a,b\} \defeq \shortfact{R}{b}{R}{a} \,.
\end{equation*}
For $\vec{a},\vec{b} \in R^n$, let $\{\vec{a},\vec{b}\}=\pmat{a_1 & b_1 \\ a_2 & b_2 \\ \vdots & \vdots \\ a_n & b_n}$ denote the matrix factorization
\begin{equation*}
    \{\vec{a},\vec{b}\} \defeq \bigtensor_{i = 1}^{n} \{a_i,b_i\} =\bigtensor_{i = 1}^{n} \shortfact{R}{b_i}{R}{a_i} \,.
\end{equation*}
We have already seen a matrix factorization of this form; if we let $\vec{a} = (L^+(v_1),\dots,L^+(v_n))$ and $\vec{b} = (L(v_1),\dots,L(v_n))$ for a partially singular braid diagram $D$ with $\Fixed(D) = \{v_1,\dots,v_n\}$, then $\ldplus = \{\vec{a},\vec{b}\}$. By definition, the potential $\omega$ associated to the matrix factorization $\{\vec{a},\vec{b}\}$ is $\vec{a}\cdot\vec{b}=a_1b_1+\ldots+a_nb_n$. 

A change of basis for $R^n$ gives us an equivalent matrix factorization. One can check what effect various change of basis operations have on the representing matrix $\{\vec{a},\vec{b}\}$. Below, we will need just one change-of-basis operation: sending $\vec{e_i}$ to $\vec{e_i}+c\vec{e_j}$ for standard basis vectors $\vec{e_i}$ and $\vec{e_j}$ of $R^n$. This has the effect of replacing the matrix factorization by $\{\vec{a}',\vec{b}'\}$, where
\begin{equation*}
    \vec{a}'_k=\begin{cases}\vec{a}_k+c\vec{a}_j&\text{if $k=i$}\\a_k&\text{else}\end{cases}
\end{equation*}
and 
\begin{equation*}
\vec{b}'_k=\begin{cases}\vec{b}_k-c\vec{b}_i&\text{if $k=j$}\\b_k&\text{else}\end{cases} \nospaceperiod
\end{equation*}
For more details, see \cite{rasmussen2006,khovanov-rozansky2004}. 

Let $C=\{\vec{a},\vec{b}\}$ be any matrix factorization over $R$. We can decompose \[
    C = \shortfact{C'}{b_1}{C'}{a_1} \,,
\] where $C'=\{\vec{a}',\vec{b}'\}$ is the factorization obtained by omitting the first components of $\vec{a}$ and $\vec{b}$. Define $\pi: C \to C' \tensor R/(b_1)$ by $\pi((c_1,c_2))=c_2\otimes 1$. Before proving \cref{thm:vertex-relabeling}, we prove that if the potential of $C$ is 0 and $b_1$ is a non-zero-divisor in $R$, then $\pi$ is a quasi-isomorphism. 

\begin{lemma}\label{lemma:matrix-factorization-reduction}
If the potential of $C$ is 0 and $b_1$ is a non-zero-divisor in $R$, then $\pi$ is a quasi-isomorphism of chain complexes.
\end{lemma}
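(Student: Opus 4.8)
The claim is that $\pi\colon C \to C' \otimes R/(b_1)$ is a quasi-isomorphism when the potential of $C$ vanishes and $b_1$ is a non-zero-divisor. First I would unwind what $\pi$ actually is as a map of complexes: the matrix factorization $C = \{\vec a, \vec b\}$ decomposes as the total complex of $\shortfact{C'}{b_1}{C'}{a_1}$, so as a bicomplex its underlying module is $C' \oplus C'$ with the ``horizontal'' maps $b_1$ (going one way) and $a_1$ (going back) together with the internal differential $\partial'$ of $C'$ in the ``vertical'' direction. Since the potential of $C$ is $\vec a \cdot \vec b = a_1 b_1 + \omega'$ where $\omega'$ is the potential of $C'$, the hypothesis $\omega = 0$ forces $\omega' = -a_1 b_1$, so $C'$ is not itself a complex, but the total object $C$ is. The target $C' \otimes R/(b_1)$ is a genuine complex because reducing mod $b_1$ kills $\omega' = -a_1 b_1$.

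The cleanest route is to exhibit $\pi$ as the projection of $C$ onto a quotient complex and show the kernel is acyclic. Concretely, consider the short exact sequence of complexes
\begin{equation*}
    0 \to K \to C \xrightarrow{\pi} C' \otimes R/(b_1) \to 0,
\end{equation*}
where $K$ consists of the first $C'$-summand together with $b_1 \cdot (\text{second } C'\text{-summand})$ — more precisely $K$ is the subcomplex whose underlying module is $C' \oplus b_1 C'$ (viewing $b_1 C' \cong C'$ since $b_1$ is a non-zero-divisor), with the differential restricted from $C$. One checks $K$ is closed under the differential: the internal $\partial'$ preserves each summand, the map $a_1$ sends the second summand into the first, and $b_1$ sends the first summand into $b_1 C'$. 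Then I would show $K$ is acyclic. Filtering $K$ by the two columns gives a two-step filtration whose associated graded is $(C', \partial') \xrightarrow{b_1} (b_1 C', \partial')$; but since $b_1$ is a non-zero-divisor, multiplication by $b_1$ is an isomorphism $C' \xrightarrow{\sim} b_1 C'$ of modules, and it commutes with $\partial'$ up to the sign conventions, so the mapping cone of this isomorphism is acyclic. Hence $K$ is acyclic and the long exact sequence in homology gives that $\pi$ is a quasi-isomorphism.

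An alternative, perhaps more in keeping with the ``excluding a variable'' technique the authors cite, is to build an explicit homotopy equivalence: define $\iota\colon C' \otimes R/(b_1) \to C$ — which requires a section of the quotient map $R \to R/(b_1)$ and so is only well-defined up to homotopy, or one works with a Gaussian-elimination-style argument directly on $C' \oplus C'$, cancelling the acyclic subcomplex $C' \xrightarrow{b_1} b_1 C'$. This is essentially Gaussian elimination along the non-zero-divisor entry $b_1$ in the matrix factorization, and it is where the non-zero-divisor hypothesis is used in an essential way (it guarantees the subcomplex being cancelled is genuinely acyclic, not merely that some map is injective). I expect the main obstacle to be bookkeeping: being careful that the object $C'$, which is \emph{not} a complex on its own, nonetheless carries an endomorphism $\partial'$ with $(\partial')^2 = \omega' \cdot \mathrm{id} = -a_1 b_1 \cdot \mathrm{id}$, and verifying that all the maps in sight ($\partial'$, $a_1$, $b_1$, $\pi$) interact correctly so that the subcomplex $K$ really is a subcomplex and really is acyclic. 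Once the signs and the module identification $b_1 C' \cong C'$ are pinned down, the homological algebra is routine.
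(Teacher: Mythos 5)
Your overall skeleton coincides with the paper's: both arguments rest on the short exact sequence $0 \to K \to C \xrightarrow{\pi} C'\otimes R/(b_1) \to 0$ (in the paper the kernel appears as the column $C''$, two copies of $C'$ joined by the maps $1$ and $a_1b_1$) and conclude via the long exact sequence once the kernel is shown to be acyclic. The gap is in your acyclicity argument for $K$. Because the factorization is two-periodic, the differential of $K$ has components in \emph{both} directions --- $b_1$ from the first summand into $b_1C'$, and $a_1$ from $b_1C'$ back into the first summand --- so neither column is a subcomplex and no two-step ``filtration by columns'' exists; moreover, for any genuine filtration the associated graded retains only the filtration-preserving components, so it could never be ``$(C',\partial')\xrightarrow{b_1}(b_1C',\partial')$''. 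More fundamentally, the pieces you want to cone off are not complexes: $(\partial')^2=\omega'\cdot\id=-a_1b_1\cdot\id$, so ``the mapping cone of the isomorphism $b_1\colon C'\to b_1C'$ is acyclic'' is not a meaningful assertion --- the curvature cancels only when the $b_1$ and $a_1$ components are kept together, i.e.\ on $K$ itself. The same objection applies to your closing Gaussian-elimination remark: the two-term object $C'\xrightarrow{b_1}b_1C'$ is neither a subcomplex of $C$ nor a complex, and $b_1\colon C'\to C'$ is not an isomorphism onto a direct summand of $C$, so there is nothing to cancel in the form you describe.

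The missing ingredient is precisely where $\omega=0$ must be used for more than making the target a complex. The paper writes the kernel as the matrix factorization with rows $(a_1b_1,1),(a_2,b_2),\dots,(a_n,b_n)$ and changes basis so that the first row becomes $(\omega,1)=(0,1)$; the result is the tensor product of the contractible factorization $\{0,1\}$ with $\{\vec{a}',\vec{0}\}$, hence acyclic. Equivalently, and closest in spirit to your last remark: since $\omega=0$ makes $K$ an honest complex, the component of its differential from the first summand onto $b_1C'$ is an isomorphism between the two summands, and cancelling \emph{that} (the standard unit-entry splitting for matrix factorizations, i.e.\ the cancellation lemma applied inside $K$ rather than inside a fictitious two-term quotient) gives $K\homotopic 0$. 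Either of these repairs the step; as written, the acyclicity of $K$ is not established.
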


\begin{proof}
It is clear that $\pi$ is surjective; since $b_1$ is a non-zero-divisor, multiplication by $b_1$ is injective, so we have the following short exact sequence:

\begin{center}
\begin{tikzcd}[ampersand replacement=\&, row sep = huge, column sep = huge]
    0 \ar[r] \ar[d,shift left] \& C' \ar[r,"1"] \ar[d,shift left,"1"] \& C' \ar[r] \ar[d,shift left,"b_1"] \& 0 \ar[r] \ar[d,shift left] \& 0 \ar[d,shift left] \\
    0 \ar[r] \ar[u,shift left] \& C' \ar[r,"b_1"] \ar[u,shift left,"a_1b_1"] \& C' \ar[r] \ar[u,shift left,"a_1"] \& C'\otimes R/(b_1) \ar[r] \ar[u,shift left] \& 0 \ar[u,shift left] \nospaceperiod
\end{tikzcd}
\end{center}

Let $C''$ denote the first nonzero column in this sequence, the matrix factorization  $\shortfact{C'}{1}{C'}{a_1b_1}.$ By the corresponding long exact sequence in homology, it suffices to show that $C''$ is acyclic in order to prove that $\pi$ is a quasi-isomorphism. We write $C''$ in matrix form, then apply our above remarks about change of basis:
\begin{align*}
    \pmat{a_1b_1 & 1 \\ a_2 & b_2 \\ \vdots & \vdots \\ a_n & b_n} \sim \pmat{a_1b_1 + a_2b_2 & 1 \\ a_2 & 0 \\ \vdots & \vdots \\ a_n & b_n} \sim \pmat{\omega & 1 \\ a_2 & 0 \\ \vdots & \vdots \\ a_n & 0} \,.
\end{align*}
Since we know that the potential $\omega = 0$, we see that $C'' = \shortfact{\{\vec{a}',\vec{0}\}}{1}{\{\vec{a}',\vec{0}\}}{0}$, and therefore is acyclic.
\end{proof}

With this lemma, we can now prove that $\C(D)$ is independent of vertex labeling for $D \in \dr$:

\begin{proof}[Proof of \cref{thm:vertex-relabeling}]
Let $S \in \dr$ be a fully singular braid diagram, and let $w \in \Fixed(S)$ be some fixed vertex such that if $w$ were instead free, the new diagram $S'$ would still be in $\dr$. Note that $R(S') = R(S)$, and $Q(S') = Q(S)/(L(w))$. Since $\C(S) = Q(S) \tensor \lsplus$, we may consider $\C(S)$ as the matrix factorization $\{\vec{a},\vec{b}\}$ over $R = Q(S)$ with $\vec{a} = (L^+(v))_{v\in\Fixed(S)}$ and $\vec{b} = (L(v))_{v \in \Fixed(S)}$. Assume without loss of generality that $b_1 = L(w)$. Since $S' \in \dr$, we know that the linear terms $L(v)$ for $v \in \Free(S')$ form a regular sequence over $R(S')/N(S') = R(S)/N(S)$, and in particular, $L(w)$ is a non-zero-divisor in $Q(S)$, since $w\in\Free(S')$. We then get that
\begin{align*}
    \C(S) &= Q(S) \tensor_{R(S)} \lsplus \\
    &\iso \{\vec{a},\vec{b}\} \\
    &\homotopic Q(S)/(L(w)) \tensor_{Q(S)} \left( Q(S) \tensor_{R(S)} \{\vec{a}',\vec{b}'\} \right) && \text{(by \cref{lemma:matrix-factorization-reduction})} \\
    &\homotopic \left( Q(S)/(L(w)) \tensor_{Q(S)}  Q(S) \right) \tensor_{R(S)} \{\vec{a}',\vec{b}'\}  && \text{(by associativity of $\tensor$)} \\
    &\homotopic Q(S)/(L(w)) \tensor_{R(S)} \{\vec{a}',\vec{b}'\} \\
    &\iso Q(S') \tensor_{R(S)} \lsplusprime \\
    &\iso Q(S') \tensor_{R(S')} \lsplusprime && \text{(since $R(S) = R(S')$)} \\
    &= \C(S') \,.
\end{align*}
Therefore, we see that changing a fixed vertex to a free one in a fully singular diagram does not change the homotopy type of $\C(-)$ as long as both diagrams are in $\dr$.

Now, we need to extend this result. Let $D,D' \in \dr$ be partially singular braid diagrams that differ only on the labeling of a single vertex $w \in \Fixed(D) \intersect \Free(D')$. We know that $\C(D_I) \homotopic \C(D'_I)$ for all $I \in \{0,1\}^{c(D)}$. In particular, we have a map in one direction: $\pi: \C(D_I) \to \C(D'_I)$ is a filtered quasi-isomorphism inducing the above equivalence. Therefore, it suffices to show that $\pi$ commutes with the edge map $d_1$, which is the sum of $d_{I,J}$. Since $\pi$ is linear over $Q(S)$, we get that it is additionally $R(S)$-linear via the natural quotient map, and therefore commutes with scalar multiplication by elements of $R(S)$. Since the edge maps $d_{I,J}$ are defined via scalar multiplication by $1$ or $U_b - U_c$, we see that $\pi$ does in fact commute with the edge maps, and therefore extends to a filtered quasi-isomorphism $\pi: \C(D) \to \C(D')$ by \cref{lemma:cone-components}.

Given any two diagrams $D', D'' \in \dr$ that differ only by some number of vertex labels, we can construct a diagram $D \in \dr$ with $\Fixed(D) = \Fixed(D') \union \Fixed(D'')$, and therefore get that $\C(D') \homotopic \C(D) \homotopic \C(D'')$, thus proving the general case.
\end{proof}

\section{MOY Moves}\label{sec:moy-moves}

In \cite{moy1998}, Murakami, Ohtsuki, and Yamada study local operations on singular diagrams (``MOY moves''). While originally formulated for oriented planar trivalent graphs, they are relevant to us because one can think of singular vertices in our braids and braid resolutions as pairs of trivalent vertices instead. Two of these moves, MOY I and MOY III, represent planar isotopy when applied to the unoriented smoothing of a diagram, and thus are useful to make up for the fact that we cannot isotope singularized crossings in the same ways that we can smoothed ones. The MOY II move corresponds to a cup/cap cobordism, but is rather more limited in its application. Nevertheless, these three moves will suffice to define Reidemeister moves (and others) in \cref{sec:invariance}. The maps that we choose to realize these moves are inspired by those used in \cite{khovanov-rozansky2004} and \cite{khovanov-rozansky2008}.

In this section, we construct filtered chain maps relating $\C(D)$ and $\C(D')$, where $D$ and $D'$ are partially singular braid diagrams connected by an MOY I, II, or III move.

\subsection{MOY I}\label{sec:moy-i}

Suppose $D$ and $D'$ are partially singular braid diagrams that differ by an MOY I move, as illustrated in \cref{fig:moy-i}. In words, there is a fixed vertex $v$ in $D$ that meets the same edge $e$ twice; without loss of generality, $e$ is to the right of $v$. The diagram $D'$ is then obtained from $D$ by removing the edge $e$ and relabeling $v$ as a bivalent vertex.

\begin{theorem}\label{thm:moy-i}
There exist $R(D')$-linear filtered quasi-isomorphisms $\muone: \C(D) \to \C(D')$ and $\muoneprime: \C(D') \to \C(D)$. Under the identification $E_1(\C(-)) \iso \CKh^-(\sm(-))$, these maps induce the expected isomorphisms corresponding to planar isotopy.
\end{theorem}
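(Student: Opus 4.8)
The plan is to realize the MOY I move at the level of the resolution-dependent factors $Q(D_I)$, then tensor with the (resolution-independent) matrix factorization part, and finally check compatibility with the cube differential. So first I would work one resolution at a time. Fix a resolution $I$ of $D$ (equivalently of $D'$, via the natural bijection on crossings), and recall $\C(D_I) = Q(D_I) \tensor \ldplus$, where $\ldplus = \{\vec a, \vec b\}$ with entries $L^+(v), L(v)$ ranging over $\Fixed(D)$. Since $v$ is a fixed vertex of $D$ meeting the edge $e$ twice (on the right), the local edge labels at $v$ are $a, b, c, d$ with, say, $c = d = e$ in $D'$ after the move — so that $L(v) = U_a + U_b - U_c - U_d$ and at the vertex $v$ the two right edges carry the same variable $U_e$. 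The key algebraic observation is that $L(v)$ then equals $U_a + U_b - 2U_e$ (or the appropriate local expression), and $R(D) = R(D')[U_e]$ with one extra polynomial generator; I expect that modulo the linear/nonlocal ideals, $U_e$ becomes redundant — precisely, the relation $L(v)=0$ or a nonlocal relation $N(\Omega)$ for a small disk $\Omega$ around the loop lets one solve for $U_e$ in terms of the other variables. This should give a canonical identification $Q(D_I) \iso Q(D'_I)$ as $R(D')$-modules, and I would make this the content of a preliminary lemma (or cite the analogous computation from \cite{nate} if it appears there).

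Next I would handle the matrix factorization factor. In $\ldplus$ for $D$ we have one tensor factor $\{L^+(v), L(v)\}$ coming from the fixed vertex $v$, which is not present in $\ldplusprime$ (since $v$ becomes bivalent). The idea is to apply \cref{lemma:matrix-factorization-reduction}: choosing $b_1 = L(v)$, provided $L(v)$ is a non-zero-divisor in the appropriate quotient ring and the potential vanishes — both of which hold because $D, D' \in \dr$ and by \cite[Lemma 2.4]{nate} — the projection $\pi$ is a quasi-isomorphism, and excluding this variable/vertex exactly matches the passage from $\C(D)$ to $\C(D')$, paralleling the proof of \cref{thm:vertex-relabeling}. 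The map $\muone$ is then $\pi$ composed with the module identification $Q(D_I) \iso Q(D'_I)$; the map $\muoneprime$ in the other direction is the obvious inclusion-type section, which is a chain map since multiplication by $b_1$ is injective. One should check these are filtered: both maps respect weight in the cube of resolutions since they are built diagramwise on each $\C(D_I)$ with $I$ unchanged.

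Then comes the globalization. I have maps $\muone_I : \C(D_I) \to \C(D'_I)$ for every $I$; I need them to commute with the edge maps $d_{I,J}$, so they assemble into a filtered chain map of the total complexes. Exactly as in the last paragraph of the proof of \cref{thm:vertex-relabeling}, the edge maps are given by scalar multiplication by $1$ or $U_b - U_c$, and $\muone_I$ is $R(D')$-linear, hence commutes with these scalars — here I would invoke \cref{lemma:cone-components} to conclude that a system of maps commuting with the edge maps extends to a map of the iterated mapping cone. The same argument applies to $\muoneprime$. Finally, to see these are filtered \emph{quasi}-isomorphisms (not just $E_1$-quasi-isomorphisms), note that on each associated graded piece they restrict to the $\pi$ of \cref{lemma:matrix-factorization-reduction} up to the module identification, which is a quasi-isomorphism; so $\gr(\muone)$ is a quasi-isomorphism and $\muone$ is a filtered quasi-isomorphism, and likewise $\muoneprime$. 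For the last sentence of the theorem, under $E_1(\C(-)) \iso \CKh^-(\sm(-))$ the loop being removed corresponds to a planar isotopy of the unoriented smoothing, and the induced map on $E_1$ is the identity on the chain-level Khovanov complex (or the canonical isomorphism identifying the two diagrams' Khovanov complexes); this is a direct unwinding of the definitions once the module identification $Q(D_I) \iso Q(D'_I)$ is made explicit.

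The main obstacle I anticipate is the first step: pinning down the precise module identification $Q(D_I) \iso Q(D'_I)$ and verifying that excluding $U_e$ interacts correctly with the nonlocal ideal $N(D_I)$ — one must be careful that the disks $\Omega$ available in $D$ versus $D'$ generate compatible ideals after eliminating $U_e$, and that $L(v)$ really is a non-zero-divisor in the relevant quotient (this is where membership in $\dr$, via the regular sequence condition, does the work). Everything after that is a routine adaptation of the vertex-relabeling argument.
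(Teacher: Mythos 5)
Your overall skeleton (work one resolution at a time, identify the local algebra, then extend to the cube using $R(D')$-linearity of the maps against the edge maps, and conclude filteredness from the componentwise construction) matches the paper, but the two central local steps are wrong, and the error traces to the same source: at the fixed vertex $v$ whose right-hand edge is the loop $e$, the loop variable appears in $L(v)$ with coefficient $+1-1=0$. With the paper's labels ($U_1$ top left, $U_2$ the loop, $U_3$ bottom left) one has $L(v)=U_1-U_3$ and $L^+(v)=U_1+2U_2+U_3$. First, this means your proposed preliminary lemma $Q(D_I)\iso Q(D'_I)$ is false: neither $L(v)$ nor any nonlocal relation $N(\Omega)$ involves $U_2$ (a disk around $v$ gives exactly $U_1-U_3$, and any other disk can be isotoped off the loop), so one can choose generators of $L(D_I)+N(D_I)$ avoiding $U_2$ entirely, and $Q(D_I)$ is a polynomial ring in $U_2$ over $Q(D'_I)$ rather than being isomorphic to it. The paper's \cref{prop:moy-i-prop} accordingly proves $Q(D'_I)\iso Q(D_I)/(U_1+U_2)$, where the extra relation $U_1+U_2$ is \emph{not} part of the defining ideal of $Q(D_I)$ and must be produced by a different mechanism. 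Second, your application of \cref{lemma:matrix-factorization-reduction} with $b_1=L(v)$ cannot work: $L(v)=U_1-U_3$ lies in the nonlocal ideal, hence is zero (in particular a zero divisor) in $Q(D_I)$, and the regular-sequence condition defining $\dr$ concerns only the \emph{free} vertices, so membership in $\dr$ does not make $L(v)$ a non-zero-divisor here. Even formally, excluding the variable against $L(v)$ would only impose $U_1=U_3$, which is already true, and would never eliminate the loop variable.

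The paper's mechanism is different in precisely the way your argument misses: because $L(v)=0$ in $Q(D_I)$, the tensor factor $\{L^+(v),L(v)\}$ of $\ldplus$ becomes an honest two-term complex whose only nonzero differential is multiplication by $L^+(v)=2U_1+2U_2$ (after using $U_3=U_1$). It is the non-zero-divisibility of $L^+(v)$, not of $L(v)$, that is needed, and it is checked directly: $Q(D_I)$ is free polynomial in $U_2$ and $2U_1+2U_2$ is a unit multiple of a monic polynomial in $U_2$. Replacing this two-term complex by its cokernel imposes $U_1+U_2=0$, kills the loop variable, and identifies the result with $Q(D'_I)\tensor\lsplusprime=\C(D'_I)$; the maps are then written explicitly as $\muone([r]\tensor(a,b)\tensor\tilde s)=[rb]\tensor\tilde s$ and $\muoneprime([r]\tensor\tilde s)=[r]\tensor(0,1)\tensor\tilde s$. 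Your remaining steps (commutation with $d_{I,J}$ because the edge maps are multiplication by elements of $R(D')$, filteredness, and the $E_1$ statement) would go through once this local computation is repaired, but as written the proposal's identification of $Q(D_I)$ with $Q(D'_I)$ and its use of \cref{lemma:matrix-factorization-reduction} are genuine gaps.
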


\begin{figure}[ht]
    \centering
    \begin{tikzcd}
\begingroup%
  \makeatletter%
  \providecommand\color[2][]{%
    \errmessage{(Inkscape) Color is used for the text in Inkscape, but the package 'color.sty' is not loaded}%
    \renewcommand\color[2][]{}%
  }%
  \providecommand\transparent[1]{%
    \errmessage{(Inkscape) Transparency is used (non-zero) for the text in Inkscape, but the package 'transparent.sty' is not loaded}%
    \renewcommand\transparent[1]{}%
  }%
  \providecommand\rotatebox[2]{#2}%
  \newcommand*\fsize{\dimexpr\f@size pt\relax}%
  \newcommand*\lineheight[1]{\fontsize{\fsize}{#1\fsize}\selectfont}%
  \ifx\svgwidth\undefined%
    \setlength{\unitlength}{72bp}%
    \ifx\svgscale\undefined%
      \relax%
    \else%
      \setlength{\unitlength}{\unitlength * \real{\svgscale}}%
    \fi%
  \else%
    \setlength{\unitlength}{\svgwidth}%
  \fi%
  \global\let\svgwidth\undefined%
  \global\let\svgscale\undefined%
  \makeatother%
  \begin{picture}(1,1.39999998)%
    \lineheight{1}%
    \setlength\tabcolsep{0pt}%
    \put(0,0){\includegraphics[width=\unitlength,page=1]{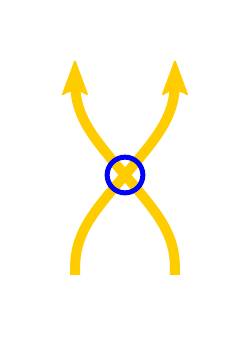}}%
    \put(0.27533335,1.19999999){\makebox(0,0)[lt]{\lineheight{1.25}\smash{\begin{tabular}[t]{l}1\end{tabular}}}}%
    \put(0.66844441,1.19999999){\makebox(0,0)[lt]{\lineheight{1.25}\smash{\begin{tabular}[t]{l}2\end{tabular}}}}%
    \put(0.2688888,0.1195553){\makebox(0,0)[lt]{\lineheight{1.25}\smash{\begin{tabular}[t]{l}3\end{tabular}}}}%
    \put(0.66844434,0.1195553){\makebox(0,0)[lt]{\lineheight{1.25}\smash{\begin{tabular}[t]{l}2\end{tabular}}}}%
  \end{picture}%
\endgroup%
 \ar[r,"\mu_I", start anchor=real east, end anchor=real west, shift left] & 
\begingroup%
  \makeatletter%
  \providecommand\color[2][]{%
    \errmessage{(Inkscape) Color is used for the text in Inkscape, but the package 'color.sty' is not loaded}%
    \renewcommand\color[2][]{}%
  }%
  \providecommand\transparent[1]{%
    \errmessage{(Inkscape) Transparency is used (non-zero) for the text in Inkscape, but the package 'transparent.sty' is not loaded}%
    \renewcommand\transparent[1]{}%
  }%
  \providecommand\rotatebox[2]{#2}%
  \newcommand*\fsize{\dimexpr\f@size pt\relax}%
  \newcommand*\lineheight[1]{\fontsize{\fsize}{#1\fsize}\selectfont}%
  \ifx\svgwidth\undefined%
    \setlength{\unitlength}{28.80000043bp}%
    \ifx\svgscale\undefined%
      \relax%
    \else%
      \setlength{\unitlength}{\unitlength * \real{\svgscale}}%
    \fi%
  \else%
    \setlength{\unitlength}{\svgwidth}%
  \fi%
  \global\let\svgwidth\undefined%
  \global\let\svgscale\undefined%
  \makeatother%
  \begin{picture}(1,3.49999989)%
    \lineheight{1}%
    \setlength\tabcolsep{0pt}%
    \put(0,0){\includegraphics[width=\unitlength,page=1]{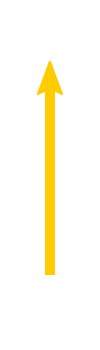}}%
    \put(0.43833314,0.30166626){\makebox(0,0)[lt]{\lineheight{1.25}\smash{\begin{tabular}[t]{l}1\end{tabular}}}}%
    \put(0.43833314,2.99999992){\makebox(0,0)[lt]{\lineheight{1.25}\smash{\begin{tabular}[t]{l}1\end{tabular}}}}%
  \end{picture}%
\endgroup%
 \ar[l, "\mu_I'", start anchor=real west, end anchor=real east, shift left]
    \end{tikzcd}
    \caption{An MOY I move.}
    \label{fig:moy-i}
\end{figure}

First, suppose $S$ and $S'$ are fully singular braid diagrams that differ by an MOY I move, as illustrated in \cref{fig:moy-i}. Specifically, $S$ contains a fixed singular vertex $v$ that meets the same edge twice. We would like to construct filtered chain maps $\muone:\C(S)\to \C(S')$ and $\muoneprime:\C(S')\to\C(S)$. To start, let us characterize $\C(S)$ and $\C(S')$. 

Without loss of generality, assume that the edge which is deleted by the MOY I move is to the right of the vertex. Label this edge with the variable $U_2$, label the top left edge $U_1$, and label the bottom left edge with $U_3$, again as in \cref{fig:moy-i}.

Let $R$ be the polynomial ring over all edges not shown in the local diagram; thus, $R(S') = R[U_1]$ and $R(S) = R(S')[U_2, U_3]$. We relate the associated quotient rings by the following proposition:

\begin{proposition}\label{prop:moy-i-prop}
As $R(S')$ modules, $Q(S') \iso Q(S)/(U_1 + U_2)$.
\end{proposition}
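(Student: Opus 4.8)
The plan is to identify $Q(S')$ with $Q(S)/(U_1+U_2)$ directly, by unwinding the linear and nonlocal ideals and tracking how they change under the MOY~I move. Recall that $Q(S) = R(S)/(L(S)+N(S))$ and $Q(S') = R(S')/(L(S')+N(S'))$, with $R(S) = R[U_1,U_2,U_3]$ and $R(S') = R[U_1]$. Since $v$ is a \emph{fixed} vertex of $S$, its linear term $L(v)$ does not enter $L(S)$; and in passing from $S$ to $S'$ the loop edge $U_2$ is deleted, $v$ becomes bivalent, and its other two edges $U_1$ (above $v$) and $U_3$ (below $v$) are fused into the single edge $U_1$ of $S'$. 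Let $\theta\colon R(S)\to R(S')$ be the surjective $R[U_1]$-algebra homomorphism with $\theta(U_2) = -U_1$ and $\theta(U_3) = U_1$; its kernel is the ideal $(U_1+U_2,\,U_1-U_3)$.

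First I would observe that $U_1-U_3$ already lies in $N(S)$: taking $\Omega$ to be a small closed disk neighbourhood of $v$ together with the loop $U_2$ (which, being incident only to $v$, is entirely contained in $\Omega$), the boundary $\partial\Omega$ meets $S$ transversely in exactly the two edges $U_1$, oriented outward, and $U_3$, oriented inward, so $N(\Omega) = U_1 - U_3$. Hence $Q(S)/(U_1+U_2) = R(S)/\bigl(L(S)+N(S)+(U_1+U_2,\,U_1-U_3)\bigr)$, and since $\theta$ descends to an isomorphism of $R(S')$-algebras $R(S)/(U_1+U_2,\,U_1-U_3)\xrightarrow{\sim}R(S')$, this quotient is identified with $R(S')/\bigl(\theta(L(S))+\theta(N(S))\bigr)$. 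So it suffices to establish the two identities $\theta(L(S)) = L(S')$ and $\theta(N(S)) = N(S')$; the isomorphism asserted by the proposition is then the identification just made, which is visibly $R(S')$-linear.

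For the first identity, $L(S) = \sum_{u\in\Free(S)}(L(u))$ and $\Free(S) = \Free(S')$, since the move alters only the fixed vertex $v$; the loop edge $U_2$ is incident to no free vertex, and for $u\in\Free(S)$ the local edge labels at $u$ in $S'$ coincide with those in $S$ except that an incidence along the former edge $U_3$ becomes an incidence along $U_1$ --- exactly what $\theta$ records --- so $\theta(L(u))$ is the linear term of $u$ in $S'$ and $\theta(L(S)) = L(S')$. For the second, deleting the loop and fusing the remaining two edges at $v$ gives a bijection $\Omega\leftrightarrow\Omega'$ between embedded disks for $S$ and for $S'$, preserving whether the decorated edge is enclosed and matching transverse boundary crossings away from the loop; a disk $\Omega$ for $S$ meets the loop in an even number of transverse points alternating between inward and outward, so $U_2$ lies in both $\In(\Omega)$ and $\Out(\Omega)$ (or in neither), and after the substitution $U_3 = U_1$ on the surviving edges one gets $N(\Omega)|_{U_3 = U_1} = U_2^{\epsilon}N(\Omega')$ with $\epsilon\in\{0,1\}$. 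Applying $\theta$ yields $\theta(N(\Omega)) = (-U_1)^{\epsilon}N(\Omega')$, and running the correspondence in both directions gives $\theta(N(S)) = N(S')$.

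I expect the nonlocal-ideal bookkeeping of this last step to be the main obstacle: the disk correspondence must be set up carefully enough to handle disks whose boundary passes close to the now-bivalent vertex $v$, and the case in which a single edge crosses $\partial\Omega$ an odd number of times and hence lies in both $\In(\Omega)$ and $\Out(\Omega)$. It is worth noting that the specific generator $U_1+U_2$ --- as opposed to $U_2 - cU_1$ for any $c\in\Q^\times$, all of which yield $R(S')$-module isomorphic quotients --- is the one needed for the chain-level content of \cref{thm:moy-i}: in $Q(S)$, where $U_1 = U_3$, one has $L^+(v) = U_1+2U_2+U_3 = 2(U_1+U_2)$, so that $(U_1+U_2)$ is precisely the ideal generated by $L^+(v)$, which is what lets one match $\C(S)$ with $\C(S')$ by excising the matrix factorization factor at $v$ as in \cref{lemma:matrix-factorization-reduction}.
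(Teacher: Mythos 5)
Your proof is correct and follows essentially the same route as the paper's: both expand $Q(S)/(U_1+U_2)$ as $R(S)/(L(S)+N(S)+(U_1+U_2))$, invoke the nonlocal relation $U_1-U_3$ coming from a small disk around $v$ and the loop to eliminate $U_2$ and $U_3$, and then identify the surviving linear and nonlocal relations with $L(S')$ and $N(S')$ by the same isotopy argument (regions avoid the loop, and intersections with the edge $U_3$ are moved to $U_1$). Your map $\theta$ simply makes explicit the substitution $U_2\mapsto -U_1$, $U_3\mapsto U_1$ that the paper performs implicitly, with slightly more careful bookkeeping for $\theta(L(S))=L(S')$ than the paper's bare assertion $L(S)=L(S')$.
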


\begin{proof}
We expand the right-hand side as a quotient of a free $R(S')$-module:
\begin{align*}
    Q(S)/(U_1 + U_2)
    &\iso Q(S) \tensor_{R(S)} R(S)/(U_1 + U_2) \\
    &\iso R(S)/(L(S)+N(S))\otimes R(S)/(U_1+U_2)\\
    &\iso R(S)/(L(S)+N(S)+(U_1+U_2))\\
    &\iso R(S')[U_2,U_3]/(L(S)+N(S)+(U_1+U_2))\\
    &\iso R(S')[U_2,U_3]/(L(S)+\tilde{N}(S)+(U_2-U_3)+(U_1+U_2))\\
    &\iso R(S')/(L(S)+\tilde{N}(S)) \,.
\end{align*}

In the above, $\tilde{N}(S)$ is the sum of the non-local relations other than $U_1-U_3$; this is exactly equal to $N(S')$, as any region intersecting these local diagrams can be made to avoid $U_2$ and any intersections with $U_3$ can be isotoped to intersect $U_1$ instead. Further, $L(S)=L(S')$. Thus we have $Q(S)/(U_1+U_2)\iso R(S')/(L(S)+\tilde{N}(S))=R(S')/(L(S')+N(S'))=Q(S')$, as desired. 
\end{proof}

\begin{proposition}
The chain complexes $\C(S)$ and $\C(S')$ are quasi-isomorphic as complexes over $R(S')$. 
\end{proposition}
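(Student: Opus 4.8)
The plan is to run the argument of the proof of \cref{thm:vertex-relabeling}, now peeling off the fixed vertex $v$ at which the MOY I move takes place and using \cref{prop:moy-i-prop} to recognize what is left. Since $S'$ is obtained from $S$ by turning $v$ into a bivalent vertex, we have $\Fixed(S) = \Fixed(S') \sqcup \{v\}$ and $\Free(S') = \Free(S)$, and in the notation before \cref{prop:moy-i-prop} we have $R(S) = R(S')[U_2, U_3]$. View $\C(S) = Q(S) \tensor_{R(S)} \lsplus$ as a matrix factorization $\{\vec{a}, \vec{b}\}$ over $R = Q(S)$, ordering $\Fixed(S)$ so that $v$ is first; then $a_1 = L^+(v)$, $b_1 = L(v)$, and $\{\vec{a}', \vec{b}'\}$ (obtained by deleting the first components) is the matrix factorization of $\Fixed(S')$ over $Q(S)$. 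By \cite[Lemma 2.4]{nate} the potential of $\C(S)$ is $0$.

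The essential point is a local computation. With $U_1, U_2, U_3$ labelled as in \cref{fig:moy-i}, so that $U_2$ is the doubled edge while $U_1$ and $U_3$ are the surviving outgoing and incoming edges at $v$, the embedded disk that encloses $v$ together with the doubled edge $U_2$ contributes the nonlocal relation $U_1 - U_3$ to $N(S)$; hence in $Q(S)$ we have $b_1 = L(v) = U_1 - U_3 = 0$ and therefore $a_1 = L^+(v) = U_1 + 2U_2 + U_3 = 2(U_1 + U_2)$, so $(a_1) = (U_1 + U_2)$ as ideals of $Q(S)$ (as $2 \in \Q^\times$). Assuming that $a_1$ — equivalently $U_1 + U_2$ — is a non-zero-divisor in $Q(S)$, the version of \cref{lemma:matrix-factorization-reduction} obtained by interchanging the two maps of the factor $\shortfact{R}{b_1}{R}{a_1}$ (a homological shift, with the proof otherwise unchanged) applies with $a_1$ in the role of the non-zero-divisor and gives a quasi-isomorphism
\begin{equation*}
    \C(S) = \{\vec{a}, \vec{b}\} \homotopic \{\vec{a}', \vec{b}'\} \tensor_{Q(S)} Q(S)/(a_1) = \{\vec{a}', \vec{b}'\} \tensor_{Q(S)} Q(S)/(U_1 + U_2) \,.
\end{equation*}

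To finish, \cref{prop:moy-i-prop} identifies $Q(S)/(U_1 + U_2) \iso Q(S')$ as $R(S')$-modules, via the isomorphism sending $U_2 \mapsto -U_1$ and $U_3 \mapsto U_1$. Under this identification $\{\vec{a}', \vec{b}'\} \tensor_{Q(S)} Q(S') = \lsplusprime \tensor_{R(S')} Q(S') = \C(S')$: no vertex of $\Fixed(S')$ is incident to the doubled edge $U_2$, and the move only merges the edge $U_3$ into $U_1$, so the linear terms $L(w)$ and $L^+(w)$ of the vertices $w \in \Fixed(S')$ computed in $S$ match those computed in $S'$ precisely after this substitution. Using associativity of $\tensor$ and base change along the quotient $Q(S) \to Q(S')$, this yields $\C(S) \homotopic \C(S')$ as complexes over $R(S')$.

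Everything above is a direct transcription of the bookkeeping in the proof of \cref{thm:vertex-relabeling}, with one exception that I expect to be the main obstacle: one must show that $L^+(v)$ — equivalently $U_1 + U_2$ — is a non-zero-divisor in $Q(S)$ (equivalently, that the matrix factorization $\shortfact{Q(S)}{0}{Q(S)}{L^+(v)}$ over $Q(S)$ has homology $Q(S)/(U_1 + U_2)$ with no extra annihilator summand). This should come out of the explicit description of the ideal $L(S) + N(S)$ near the MOY I site worked out in the proof of \cref{prop:moy-i-prop}.
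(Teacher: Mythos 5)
Your proposal follows the paper's proof in all essentials: use the nonlocal relation $U_1 - U_3$ to reduce the factor at $v$ to a two-term complex with zero map in one direction and $2(U_1+U_2)$ in the other, replace that factor by the quotient $Q(S)/(U_1+U_2)$, and then invoke \cref{prop:moy-i-prop} together with the identification of the remaining tensor factor with $\lsplusprime$. Your mirrored version of \cref{lemma:matrix-factorization-reduction} (with $a_1$ playing the role of the non-zero-divisor, justified because the potential vanishes over $Q(S)$ by Dowlin's Lemma 2.4) is just a repackaging of the paper's step ``replace the mapping cone of $2U_1+2U_2$ by its cokernel,'' and the final bookkeeping matching $L(w)$, $L^+(w)$ for $w \in \Fixed(S')$ under the substitution $U_3 \mapsto U_1$ agrees with the paper's $\optilde{\lsplus} = \lsplusprime$.

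The one genuine gap is exactly the step you assume and flag at the end: that $U_1+U_2$ (equivalently $L^+(v)$) is a non-zero-divisor in $Q(S)$. Without this, the cokernel replacement is not a quasi-isomorphism, so the whole argument hinges on it; it cannot be left as an expectation. The paper closes it with a short argument that you should supply: by the reasoning in the proof of \cref{prop:moy-i-prop}, one can choose a generating set for $L(S)+N(S)$ none of whose elements involves $U_2$ --- the doubled edge $U_2$ meets only the fixed vertex $v$, so it appears in no linear term $L(w)$ for $w \in \Free(S)$, the terms $\pm U_2$ cancel in $L(v)$ (which in any case is excluded from $L(S)$ since $v$ is fixed), and any disk whose boundary crosses the edge $U_2$ can be isotoped to avoid it. Consequently $Q(S)$ is a polynomial ring in the variable $U_2$ over the quotient of $R(S')[U_3]$ by those generators, and $2U_1+2U_2$ is a unit multiple of a polynomial that is monic in $U_2$, hence a non-zero-divisor. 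Adding this paragraph makes your proof complete, and it then coincides with the paper's.
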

\begin{proof}
We can use \cref{prop:moy-i-prop} to expand $\C(S)$: 
\begin{align*}
    \C(S) &= Q(S) \tensor_{R(S)}\lsplus \\
    &= Q(S) \tensor \left( \shortfact{R(S)}{U_1-U_3}{R(S)}{U_1+2U_2+U_3}\tensor\lsplusdotdotdot\right) \\
    &\iso Q(S) \tensor  \shortfact{R(S)}{0}{R(S)}{2U_1+2U_2}\tensor\lsplusdotdotdot
    && \text{(using relation $U_1-U_3$ in $N(S)$)} \\
    &\homotopic Q(S) \tensor R(S)/(U_1 + U_2)\tensor\lsplusdotdotdot
    && \text{(replacing $2U_1 + 2U_2$ by the cokernel)} \\
    &\iso \left(Q(S) \tensor R(S)/(U_1 + U_2)\right) \tensor \lsplusdotdotdot \\
    &\iso Q(S') \tensor_{R(S')} \lsplusprime && \text{(by \cref{prop:moy-i-prop})} \\
    &= \C(S') \,.
\end{align*}
In the above, let $\lsplusdotdotdot=\bigtensor_{w \in \Fixed(D)\setminus\{v\}} \shortfact{R(D)}{L(w)}{R(D)}{L^+(w)}$, and note $\lsplusdotdotdot=\lsplusprime$. Note that we may replace the mapping cone of $2U_1 + 2U_2$ by its cokernel in the fourth line only after checking that $2U_1 + 2U_2$ is not a zero divisor in $Q(S)$; by the logic in the proof of \cref{prop:moy-i-prop}, we may choose a generating set of relations for $N(S) + L(S)$, none of which contain a term with a nonzero power of $U_2$. Therefore, $Q(S)$ is isomorphic to a free polynomial ring over $U_2$; since $2U_1 + 2U_2$ is a unit multiple (over $\Q$) of a monic polynomial in $U_2$, we therefore get that it is not a zero divisor in $Q(S)$.
\end{proof}

Let $\muone: \C(S) \to \C(S')$ be the quotient map implied by the above calculations. Explicitly on a simple tensor, $\muone([r]\tensor (a,b)\tensor \tilde{s})=[rb]\tensor \tilde{s}$. Let $\muoneprime: \C(S') \to \C(S)$ be the splitting of $\muone$ given by inclusion into the first $R(S)$ summand in the equivalence of $R(S)/(U_1+U_2)$ and $\shortfact{R(S)}{0}{R(S)}{2U_1+2U_2}$ in the above proof. Explicitly on a simple tensor, $\muoneprime([r]\tensor \tilde{s})=[r]\tensor(0,1)\tensor\tilde{s}$. 

For partially singular braid diagrams $D$ and $D'$ related by an MOY I move, extend both maps to the cube of resolutions by defining $\muone: \C(D_I) \to \C(D'_I)$ and $\muoneprime: \C(D'_I) \to \C(D_I)$ as above for each $I \in \{0,1\}^{c(D)}$.

\begin{proof}[Proof of \cref{thm:moy-i}]
It is clear that $\muone$ and $\muoneprime$ are filtered maps, since they are defined component-wise on the cube of resolutions.

We need to check that $\muone$ and $\muoneprime$ are chain maps, i.e.\ that they commute with the edge map $d_1$. Let $I,J \in \{0,1\}^{c(D)}$ with $I \coveredby J$. If $I$ and $J$ differ at a positive crossing, then $d_{I,J}$ is given by $\oldphi_+ \tensor \ldplus = 1 \tensor \ldplus$. Otherwise, $d_{I,J}$ is given by $\oldphi_- \tensor \ldplus = (U_b - U_c) \tensor \ldplus$. Either way, the edge maps are given by multiplication by an element of $R(D')$. Since $\mutwo$ and $\mutwoprime$ were defined to be $R(D')$-linear, we get that they commute with $d_1$.
\end{proof}

\subsection{MOY II}\label{sec:moy-ii}

Suppose $D$ and $D'$ are partially singular braid diagrams with $D'$ the result of applying an MOY II move to $D$ and reducing the number of crossings, as shown in \cref{fig:moy-ii}. In words, $D$ contains a free vertex $v_1$, a fixed vertex $v_2$, and two edges $e_5$ and $e_6$ from $v_2$ to $v_1$. The diagram $D'$ is obtained from $D$ by removing $e_5$ and $e_6$ and merging $v_1$ and $v_2$ into a single fixed vertex.

\begin{figure}[ht]
    \centering
    \begin{tikzcd}
\begingroup%
  \makeatletter%
  \providecommand\color[2][]{%
    \errmessage{(Inkscape) Color is used for the text in Inkscape, but the package 'color.sty' is not loaded}%
    \renewcommand\color[2][]{}%
  }%
  \providecommand\transparent[1]{%
    \errmessage{(Inkscape) Transparency is used (non-zero) for the text in Inkscape, but the package 'transparent.sty' is not loaded}%
    \renewcommand\transparent[1]{}%
  }%
  \providecommand\rotatebox[2]{#2}%
  \newcommand*\fsize{\dimexpr\f@size pt\relax}%
  \newcommand*\lineheight[1]{\fontsize{\fsize}{#1\fsize}\selectfont}%
  \ifx\svgwidth\undefined%
    \setlength{\unitlength}{64.79999828bp}%
    \ifx\svgscale\undefined%
      \relax%
    \else%
      \setlength{\unitlength}{\unitlength * \real{\svgscale}}%
    \fi%
  \else%
    \setlength{\unitlength}{\svgwidth}%
  \fi%
  \global\let\svgwidth\undefined%
  \global\let\svgscale\undefined%
  \makeatother%
  \begin{picture}(1,2.22222228)%
    \lineheight{1}%
    \setlength\tabcolsep{0pt}%
    \put(0,0){\includegraphics[width=\unitlength,page=1]{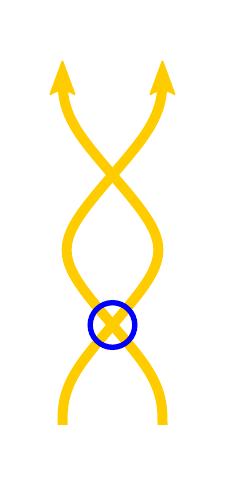}}%
    \put(0.23395047,2.05555564){\makebox(0,0)[lt]{\lineheight{1.25}\smash{\begin{tabular}[t]{l}1\end{tabular}}}}%
    \put(0.68716029,2.05555564){\makebox(0,0)[lt]{\lineheight{1.25}\smash{\begin{tabular}[t]{l}2\end{tabular}}}}%
    \put(0.13098754,1.11234582){\makebox(0,0)[lt]{\lineheight{1.25}\smash{\begin{tabular}[t]{l}5\end{tabular}}}}%
    \put(0.79753069,1.11234582){\makebox(0,0)[lt]{\lineheight{1.25}\smash{\begin{tabular}[t]{l}6\end{tabular}}}}%
    \put(0.24320972,0.1328394){\makebox(0,0)[lt]{\lineheight{1.25}\smash{\begin{tabular}[t]{l}3\end{tabular}}}}%
    \put(0.68685166,0.13358014){\makebox(0,0)[lt]{\lineheight{1.25}\smash{\begin{tabular}[t]{l}4\end{tabular}}}}%
  \end{picture}%
\endgroup%
 \ar[r,"\mu_{II}", start anchor=real east, end anchor=real west, shift left] & 
\begingroup%
  \makeatletter%
  \providecommand\color[2][]{%
    \errmessage{(Inkscape) Color is used for the text in Inkscape, but the package 'color.sty' is not loaded}%
    \renewcommand\color[2][]{}%
  }%
  \providecommand\transparent[1]{%
    \errmessage{(Inkscape) Transparency is used (non-zero) for the text in Inkscape, but the package 'transparent.sty' is not loaded}%
    \renewcommand\transparent[1]{}%
  }%
  \providecommand\rotatebox[2]{#2}%
  \newcommand*\fsize{\dimexpr\f@size pt\relax}%
  \newcommand*\lineheight[1]{\fontsize{\fsize}{#1\fsize}\selectfont}%
  \ifx\svgwidth\undefined%
    \setlength{\unitlength}{64.79999828bp}%
    \ifx\svgscale\undefined%
      \relax%
    \else%
      \setlength{\unitlength}{\unitlength * \real{\svgscale}}%
    \fi%
  \else%
    \setlength{\unitlength}{\svgwidth}%
  \fi%
  \global\let\svgwidth\undefined%
  \global\let\svgscale\undefined%
  \makeatother%
  \begin{picture}(1,2.22222228)%
    \lineheight{1}%
    \setlength\tabcolsep{0pt}%
    \put(0.23395047,2.05555564){\makebox(0,0)[lt]{\lineheight{1.25}\smash{\begin{tabular}[t]{l}1\end{tabular}}}}%
    \put(0.68716029,2.05555564){\makebox(0,0)[lt]{\lineheight{1.25}\smash{\begin{tabular}[t]{l}2\end{tabular}}}}%
    \put(0.24320972,0.1328394){\makebox(0,0)[lt]{\lineheight{1.25}\smash{\begin{tabular}[t]{l}3\end{tabular}}}}%
    \put(0.68685166,0.13358014){\makebox(0,0)[lt]{\lineheight{1.25}\smash{\begin{tabular}[t]{l}4\end{tabular}}}}%
    \put(0,0){\includegraphics[width=\unitlength,page=1]{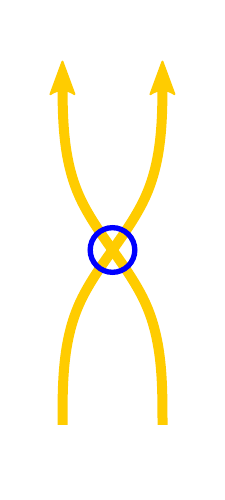}}%
  \end{picture}%
\endgroup%
 \ar[l, "\mu_{II}'", start anchor=real west, end anchor=real east, shift left]
    \end{tikzcd}
    \caption{An MOY II move.}
    \label{fig:moy-ii}
\end{figure}

\begin{theorem}\label{thm:moy-ii}
There exists a direct sum decomposition $\C(D) \iso \C(D') \directsum \C(D')$ as filtered chain complexes over $R(D')$. Define $\mu_{II}: \C(D) \to \C(D')$ to be projection onto the second summand, and define $\mu_{II}': \C(D') \to \C(D)$ to be inclusion into the first summand. Under the identification $E_1(\C(-)) \iso \CKh^-(\sm(-))$, the maps $\mutwo$ and $\mutwoprime$ induce the maps on Khovanov homology corresponding to the cobordisms which delete and introduce a circle, respectively.
\end{theorem}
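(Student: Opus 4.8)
The plan is to follow the proof of \cref{thm:moy-i}: first prove the statement for fully singular braid diagrams $S, S' \in \dr$ related by a single MOY II move, and then promote it to the cube of resolutions. Adopt the labels of \cref{fig:moy-ii}: in $S$ the free vertex $v_1$ sits directly above the fixed vertex $v_2$, the two edges between them are $5$ (left) and $6$ (right), the edges above $v_1$ are $1$ and $2$, and the edges below $v_2$ are $3$ and $4$; in $S'$ the single merged fixed vertex $v$ has edges $1,2$ above and $3,4$ below. Let $R(S')$ be the polynomial ring on the edge variables other than $U_5, U_6$, so that $R(S) = R(S')[U_5,U_6]$. A short computation gives $L(v_1) = U_1+U_2-U_5-U_6$, $L(v_2) = U_5+U_6-U_3-U_4$ and $L(v) = U_1+U_2-U_3-U_4$, with $L^+$ the corresponding sums of the four variables.

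The first, and main, step is the MOY II analogue of \cref{prop:moy-i-prop}: as $R(S')$-modules,
\[
    Q(S) \iso Q(S') \tensor_{R(S')} R(S')[U_5,U_6]/(U_5+U_6-U_1-U_2,\ U_5U_6-U_1U_2) \iso Q(S')[U_5]/((U_5-U_1)(U_5-U_2)),
\]
which exhibits $Q(S)$ as a free $Q(S')$-module of rank $2$ with basis $\{1, U_5\}$ (and $U_6 = U_1+U_2-U_5$). Two relations are present for free: the linear relation $L(v_1)$ lies in $L(S)$ because $v_1 \in \Free(S)$, and $U_1U_2 - U_5U_6 = N(\Omega)$ for $\Omega$ a small disk encircling only $v_1$, so it lies in $N(S)$. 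The content of the proposition is that, modulo these two relations, every other generator of $L(S)+N(S)$ becomes a relation in the $R(S')$-variables, and that the resulting relations generate exactly $L(S')+N(S')$. For $L(S)$ this is immediate, since each free vertex of $S$ besides $v_1$ is a free vertex of $S'$ with the same linear term. For $N(S)$ it is the ``bigon'' version of the isotopy argument inside the proof of \cref{prop:moy-i-prop}: any disk meeting the local picture can be isotoped either to avoid $e_5$ and $e_6$ entirely, in which case $N(\Omega)$ is a generator of $N(S')$, or to cross both $e_5$ and $e_6$, in which case $U_5U_6$ is a factor of $N(\Omega)$ and is replaced by $U_1U_2$ using $U_5U_6-U_1U_2$. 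I expect making this reduction precise to be the main obstacle.

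Granting the proposition, we argue as in \cref{thm:moy-i}. The only difference between $\lsplus$ and $\lsplusprime$ is that $\lsplus$ contributes the matrix factorization factor $\shortfact{R(S)}{L(v_2)}{R(S)}{L^+(v_2)}$ of the fixed vertex $v_2$ where $\lsplusprime$ contributes $\shortfact{R(S')}{L(v)}{R(S')}{L^+(v)}$ of the merged fixed vertex $v$; all other fixed vertices are common to the two, and their linear terms involve neither $U_5$ nor $U_6$. Since $U_5+U_6 = U_1+U_2$ in $Q(S)$, there $L(v_2) = L(v)$ and $L^+(v_2) = L^+(v)$, so tensoring with $Q(S)$ identifies these two factors. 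Hence, as curved complexes over $R(S')$,
\[
    \C(S) = Q(S)\tensor_{R(S)}\lsplus \iso Q(S)\tensor_{R(S')}\lsplusprime \iso \left(Q(S')\tensor_{R(S')}\lsplusprime\right)^{\directsum 2} = \C(S')^{\directsum 2},
\]
the last isomorphism being induced by the $Q(S')$-basis $\{1, U_5\}$ of $Q(S)$ from the proposition: multiplication by $L(v)$ and $L^+(v)$, being by elements of $R(S')$, acts $Q(S')$-linearly and hence diagonally in this basis (the two copies of $\C(S')$ differ by an internal quantum shift coming from $\deg U_5$, which is invisible to the weight filtration). This fixes the decomposition, and hence defines $\mutwo$ and $\mutwoprime$.

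Finally, for $D, D' \in \dr$ related by an MOY II move disjoint from all crossings, $c(D) = c(D')$ and for each $I \in \{0,1\}^{c(D)}$ the diagrams $D_I, D'_I$ are related by the same move, so the previous step gives $\C(D_I) \iso \C(D'_I)^{\directsum 2}$ compatibly in $I$. The edge maps $d_{I,J}$ are multiplication by $1$ or by $U_b - U_c$ for edges $b,c$ outside the local picture, hence by elements of $R(D')$; since the decomposition is $R(D')$-linear it commutes with $d_1$, and being defined resolution-by-resolution it respects the weight filtration, so $\C(D) \iso \C(D')\directsum\C(D')$ as filtered chain complexes over $R(D')$ with $\mutwo$ and $\mutwoprime$ the stated projection and inclusion. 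For the last assertion, the unoriented smoothing turns the bigon between $v_1$ and $v_2$ into a split unknot, so $\sm(D) = \sm(D')\sqcup S^1$ and $\CKh^-(\sm(D)) \iso \CKh^-(\sm(D'))\tensor V$ for the circle Frobenius algebra $V$. Unwinding the identification of $E_1(\C(-))$ with $\CKh^-(\sm(-))$, the algebraic decomposition above matches this tensor decomposition, with the first summand the image of the unit on the new circle and the second a complementary summand; hence $\mutwoprime$ induces the birth cobordism map and $\mutwo$ the death cobordism map. The one remaining point is the matching of the two summands with the two generators of $V$, which follows from a degree count together with the observation that $\mutwo \circ \mutwoprime = 0$, mirroring $\varepsilon \circ \iota = 0$.
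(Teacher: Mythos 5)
Your proposal is correct and follows essentially the same route as the paper: the rank-two freeness of $Q(S)$ over $Q(S')$ (the paper uses the basis $\{1,U_6\}$ where you use $\{1,U_5\}$), the identification of the fixed-vertex factors via $U_5+U_6=U_1+U_2$ in $Q(S)$, the resolution-by-resolution extension using $R(D')$-linearity of the edge maps and the weight filtration, and the identification of $\mutwoprime$ and $\mutwo$ with the birth and death cobordism maps. The only differences are cosmetic — the choice of basis and a slightly less explicit verification of the induced Khovanov maps (the paper simply computes that $\mutwoprime$ is multiplication by $1$ and that $\mutwo$ sends $1\mapsto 0$, $U_6\mapsto 1$) — so no changes are needed.
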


To start, let $S$ and $S'$ be fully singular braid diagrams again with $S'$ the result of applying an MOY II move to $S$ reducing the number of crossings. Let $R$ be the polynomial ring over all edges not shown in the local diagrams, so that $R(S') = R[U_1, U_2, U_3, U_4]$, and $R(S) = R(S')[U_5, U_6]$. 

\begin{proposition}
As an $R(S')$-module, $Q(S) \iso Q(S')\spannedby{1} \directsum Q(S')\spannedby{U_6}$. 
\end{proposition}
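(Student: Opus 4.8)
The plan is to follow the template of \cref{prop:moy-i-prop}: use the linear relation at the free vertex to eliminate one of the two internal edge variables, match the surviving nonlocal relations with those of $S'$, and then read off the splitting from a monic quadratic relation in the remaining variable. Label the edges of the MOY II configuration as in \cref{fig:moy-ii}: edges $1,2$ at the top, internal edges $5$ (left) and $6$ (right) running between the two singular vertices, and edges $3,4$ at the bottom; write $R$ for the polynomial ring over the edges not pictured, so $R(S') = R[U_1,U_2,U_3,U_4]$ and $R(S) = R(S')[U_5,U_6]$. The top vertex $v_1$ is free, so $L(v_1) = U_1 + U_2 - U_5 - U_6$ lies in the linear ideal $L(S)$, and working modulo $L(v_1)$ we may substitute $U_5 = U_1 + U_2 - U_6$, identifying $R(S)/(L(v_1)) \iso R(S')[U_6]$. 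No free vertex of $S$ other than $v_1$ is incident to the internal edges $e_5,e_6$, so the remaining generators of $L(S)$ involve neither $U_5$ nor $U_6$; under the identification above the image of $L(S)$ is exactly $L(S')$.

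Next I would analyze the nonlocal ideal, arguing as in the $\tilde N$ discussion in the proof of \cref{prop:moy-i-prop}. Every admissible region $\Omega$ for $S$ either crosses neither of $e_5,e_6$ --- in which case it contains both of $v_1,v_2$ or neither, $N(\Omega)$ involves no internal variable, and (after an isotopy making $\Omega$ admissible for $S'$) $N(\Omega)$ is a generator of $N(S')$ --- or it crosses both $e_5$ and $e_6$ and thereby separates $v_1$ from $v_2$. In the latter case, say $\Omega$ contains $v_1$; then $N(\Omega) = U_1 U_2 \cdot m_{\mathrm{out}} - U_5 U_6 \cdot m_{\mathrm{in}}$ for monomials $m_{\mathrm{out}}, m_{\mathrm{in}}$ coming from the rest of $\partial\Omega$, and modulo the relation $U_5 U_6 - U_1 U_2$ coming from the small disk about $v_1$ this reduces to $U_1 U_2 \cdot N(\Omega')$, where $\Omega'$ is $\Omega$ shrunk past $v_1$ and $N(\Omega')$ is a generator of $N(S')$; if instead $\Omega$ contains $v_2$, one factors out $U_3 U_4$ using the disk about $v_2$. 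Conversely, every generator of $N(S')$ comes from a region of $S$. Hence, after the substitution $U_5 = U_1 + U_2 - U_6$, the image of $N(S)$ is $N(S') + (U_5 U_6 - U_1 U_2)$, and $U_5 U_6 - U_1 U_2 \mapsto (U_1 + U_2 - U_6)U_6 - U_1 U_2 = -(U_6 - U_1)(U_6 - U_2)$.

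Combining the two steps gives
\[
  Q(S) \iso R(S')[U_6] / \bigl(L(S') + N(S') + ((U_6 - U_1)(U_6 - U_2))\bigr) \iso Q(S')[U_6] / \bigl((U_6 - U_1)(U_6 - U_2)\bigr).
\]
Since $(U_6 - U_1)(U_6 - U_2) = U_6^2 - (U_1 + U_2)U_6 + U_1 U_2$ is a monic polynomial of degree $2$ in $U_6$ with coefficients in $Q(S')$, the quotient $Q(S')[U_6] / ((U_6 - U_1)(U_6 - U_2))$ is a free $Q(S')$-module with basis $\{1, U_6\}$. Because the $R(S')$-module structure on $Q(S)$ factors through $Q(S')$, this yields $Q(S) \iso Q(S')\spannedby{1} \directsum Q(S')\spannedby{U_6}$ as $R(S')$-modules, as claimed.

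The one genuinely delicate point is the bookkeeping in the nonlocal-ideal step: one must check that \emph{every} admissible region for $S$, including regions whose boundary winds around $e_5$ or $e_6$ more than once, reduces modulo the single quadratic relation $U_5 U_6 - U_1 U_2$ to a region admissible for $S'$, and conversely that no generator of $N(S')$ is lost. This is the direct analogue of the argument already carried out for $\tilde N(S)$ in \cref{prop:moy-i-prop}, and I would organize it in the same way.
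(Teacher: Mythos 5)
Your proof is correct and follows essentially the same route as the paper's: the paper simply writes $Q(S) = Q(S')[U_5,U_6]/(U_5+U_6-U_1-U_2,\ U_5U_6-U_1U_2)$, dismissing all other nonlocal relations with a one-line remark that such regions can be isotoped away from $U_5$ and $U_6$, and then reads off the basis $\{1,U_6\}$ from the monic quadratic exactly as you do. Your more detailed region bookkeeping is finer than what the paper records; if you keep it, note that the reduction should be $N(\Omega)\equiv N(\Omega')$ (the in-product's $U_5U_6$ is replaced by $U_1U_2$ when the boundary is pushed past $v_1$, with no extra factor $U_1U_2$, and without assuming $e_1,e_2$ cross $\partial\Omega$ outward), and that regions meeting $e_5$ or $e_6$ an even number of times without separating $v_1$ from $v_2$ also occur --- neither point affects the conclusion.
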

\begin{proof}
First, note that $Q(S) = Q(S')[U_5, U_6]/(U_5 + U_6 - U_1 - U_2, U_5U_6 - U_1U_2)$. We do not need to consider any other non-local relations, as any region $\Omega$ intersecting these diagrams can be isotoped away from $U_5$ and $U_6$ to give an equivalent or stronger relation. We want to prove that $\{1, U_6\}$ is a basis for $Q(S)$ over $Q(S')$. To see that $\{1,U_6\}$ is a generating set, it is enough to note that in $Q(S)$, $U_5=U_1+U_2-U_6$, and that $(U_1+U_2-U_6)U_6-U_1U_2=0$, so $U_6^2=(U_1+U_2)U_6-U_3U_4$. Linear independence follows from the fact that $U_6^2-(U_1 + U_2)U_6 + U_1U_2$ is a monic polynomial of degree 2 in $U_6$.
\end{proof}

Using this proposition, we can decompose 
\begin{align*}
    \C(S)
    &\iso Q(S) \tensor_{R(S)} \lsplus \\
    &\iso Q(S) \tensor_{R(S)} \left( \bigtensor_{v \in \Fixed(S)} \shortfact{R(S)}{L(v)}{R(S)}{L^+(v)} \right) \\
    &\iso Q(S) \tensor_{R(S)} \left( \bigtensor_{v \in \Fixed(S')} \shortfact{R(S)}{L(v)}{R(S)}{L^+(v)} \right) \\
    &\iso \bigtensor_{v \in \Fixed(S')} \shortfact{Q(S)}{L(v)}{Q(S)}{L^+(v)} \\
    &\iso \bigtensor_{v \in \Fixed(S')} \shortfact{Q(S')\spannedby{1} \directsum Q(S')\spannedby{U_6}}{L(v)}{Q(S')\spannedby{1} \directsum Q(S')\spannedby{U_6}}{L^+(v)} \\
    &\iso \left( \bigtensor_{v \in \Fixed(S')} \shortfact{Q(S')\spannedby{1}}{L(v)}{Q(S')\spannedby{1}}{L^+(v)} \right) \directsum \left( \bigtensor_{v \in \Fixed(S')} \shortfact{Q(S')\spannedby{U_6}}{L(v)}{Q(S')\spannedby{U_6}}{L^+(v)} \right) \\
    &\iso \left(Q(S')\spannedby{1} \tensor_{R(S')} \lsplusprime\right) \directsum \left(Q(S')\spannedby{U_6} \tensor_{R(S')} \lsplusprime\right) \\
    &\iso \C(S')\spannedby{1} \directsum \C(S')\spannedby{U_6} \,.
\end{align*}

Define $\mutwo: \C(S) \to \C(S')$ to be projection onto the second summand in the above decomposition, and define $\mutwoprime: \C(S') \to \C(S)$ to be inclusion into the first summand. For partially singular braid diagrams $D$ and $D'$ related by an MOY II move, extend both maps to the cube of resolutions by defining $\mutwo: \C(D_I) \to \C(D'_I)$ and $\mutwoprime: \C(D'_I) \to \C(D_I)$ as above for each $I \in \{0,1\}^{c(D)}$.

\begin{proof}[Proof of \cref{thm:moy-ii}]
It is clear that $\mutwo$ and $\mutwoprime$ are filtered maps, since they are defined component-wise on the cube of resolutions. Next, we need to check that $\mutwo$ and $\mutwoprime$ are chain maps, i.e.\ that they commute with the edge map $d_1$. Let $I,J \in \{0,1\}^{c(D)}$ with $I \coveredby J$. If $I$ and $J$ differ at a positive crossing, then $d_{I,J}$ is given by $\oldphi_+ \tensor \ldplus = 1 \tensor \ldplus$. Otherwise, $d_{I,J}$ is given by $\oldphi_- \tensor \ldplus = (U_b - U_c) \tensor \ldplus$. Either way, the edge maps are given by multiplication by an element of $R(D')$. Since $\mutwo$ and $\mutwoprime$ were defined to be $R(D')$-linear, we get that they commute with $d_1$.

We used a direct sum decomposition of $\C(S)$ to define these maps on complete resolutions. We can see this direct sum decomposition on the cube of resolutions as well. Specifically, we have a split exact sequence:
\begin{equation*}
    \xymatrix{0\ar[r]&
    \C(D')\ar@<1ex>[r]^{\mutwoprime}&
    \C(D)\ar@<1ex>[r]^{\mutwo}\ar@<1ex>[l]^{1 \tensor \ldplus}&
    \C(D')\ar[r]\ar@<1ex>[l]^{(U_6 - U_1) \tensor \ldplus}&
    0} \nospaceperiod
\end{equation*}

Finally, we want to show that these maps induce the correct morphisms on the Khovanov complex. The cobordism corresponding to the introduction of a circle is induced by multiplication by $1$ \cite{barnatan2005}. This should correspond to $\mutwoprime$, which we can see also induces multiplication by $1$ on homology. The cobordism corresponding to the deletion of a circle should send $1 \mapsto 0$ and $X \mapsto 1$, where $X$ is a variable associated to the shrinking circle. In our case, $\mutwo$ maps $1 \mapsto 0$ and $U_6 \mapsto 1$, inducing this same map on homology.
\end{proof}

One can repeat the same argument to show that we also have similar MOY II decompositions for the cases in \cref{fig:moy-ii-variations}.

\begin{figure}[ht]
    \centering
    \begin{tikzcd}
\begingroup%
  \makeatletter%
  \providecommand\color[2][]{%
    \errmessage{(Inkscape) Color is used for the text in Inkscape, but the package 'color.sty' is not loaded}%
    \renewcommand\color[2][]{}%
  }%
  \providecommand\transparent[1]{%
    \errmessage{(Inkscape) Transparency is used (non-zero) for the text in Inkscape, but the package 'transparent.sty' is not loaded}%
    \renewcommand\transparent[1]{}%
  }%
  \providecommand\rotatebox[2]{#2}%
  \newcommand*\fsize{\dimexpr\f@size pt\relax}%
  \newcommand*\lineheight[1]{\fontsize{\fsize}{#1\fsize}\selectfont}%
  \ifx\svgwidth\undefined%
    \setlength{\unitlength}{64.79999828bp}%
    \ifx\svgscale\undefined%
      \relax%
    \else%
      \setlength{\unitlength}{\unitlength * \real{\svgscale}}%
    \fi%
  \else%
    \setlength{\unitlength}{\svgwidth}%
  \fi%
  \global\let\svgwidth\undefined%
  \global\let\svgscale\undefined%
  \makeatother%
  \begin{picture}(1,2.22222228)%
    \lineheight{1}%
    \setlength\tabcolsep{0pt}%
    \put(0,0){\includegraphics[width=\unitlength,page=1]{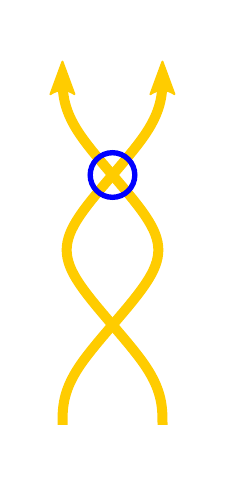}}%
    \put(0.23395047,2.05555564){\makebox(0,0)[lt]{\lineheight{1.25}\smash{\begin{tabular}[t]{l}1\end{tabular}}}}%
    \put(0.68716029,2.05555564){\makebox(0,0)[lt]{\lineheight{1.25}\smash{\begin{tabular}[t]{l}2\end{tabular}}}}%
    \put(0.13098754,1.11234582){\makebox(0,0)[lt]{\lineheight{1.25}\smash{\begin{tabular}[t]{l}5\end{tabular}}}}%
    \put(0.79753069,1.11234582){\makebox(0,0)[lt]{\lineheight{1.25}\smash{\begin{tabular}[t]{l}6\end{tabular}}}}%
    \put(0.24320972,0.1328394){\makebox(0,0)[lt]{\lineheight{1.25}\smash{\begin{tabular}[t]{l}3\end{tabular}}}}%
    \put(0.68685166,0.13358014){\makebox(0,0)[lt]{\lineheight{1.25}\smash{\begin{tabular}[t]{l}4\end{tabular}}}}%
  \end{picture}%
\endgroup%
 \ar[r,"\mu_{II}", start anchor=real east, end anchor=real west, shift left] &  \ar[l, "\mu_{II}'", start anchor=real west, end anchor=real east, shift left] & 
\begingroup%
  \makeatletter%
  \providecommand\color[2][]{%
    \errmessage{(Inkscape) Color is used for the text in Inkscape, but the package 'color.sty' is not loaded}%
    \renewcommand\color[2][]{}%
  }%
  \providecommand\transparent[1]{%
    \errmessage{(Inkscape) Transparency is used (non-zero) for the text in Inkscape, but the package 'transparent.sty' is not loaded}%
    \renewcommand\transparent[1]{}%
  }%
  \providecommand\rotatebox[2]{#2}%
  \newcommand*\fsize{\dimexpr\f@size pt\relax}%
  \newcommand*\lineheight[1]{\fontsize{\fsize}{#1\fsize}\selectfont}%
  \ifx\svgwidth\undefined%
    \setlength{\unitlength}{64.79999828bp}%
    \ifx\svgscale\undefined%
      \relax%
    \else%
      \setlength{\unitlength}{\unitlength * \real{\svgscale}}%
    \fi%
  \else%
    \setlength{\unitlength}{\svgwidth}%
  \fi%
  \global\let\svgwidth\undefined%
  \global\let\svgscale\undefined%
  \makeatother%
  \begin{picture}(1,2.22222228)%
    \lineheight{1}%
    \setlength\tabcolsep{0pt}%
    \put(0,0){\includegraphics[width=\unitlength,page=1]{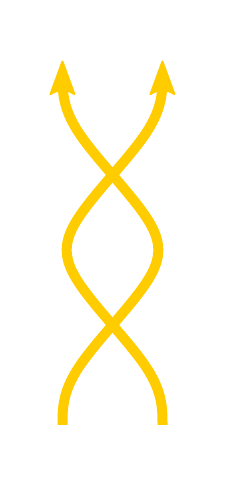}}%
    \put(0.23395047,2.05555564){\makebox(0,0)[lt]{\lineheight{1.25}\smash{\begin{tabular}[t]{l}1\end{tabular}}}}%
    \put(0.68716029,2.05555564){\makebox(0,0)[lt]{\lineheight{1.25}\smash{\begin{tabular}[t]{l}2\end{tabular}}}}%
    \put(0.13098754,1.11234582){\makebox(0,0)[lt]{\lineheight{1.25}\smash{\begin{tabular}[t]{l}5\end{tabular}}}}%
    \put(0.79753069,1.11234582){\makebox(0,0)[lt]{\lineheight{1.25}\smash{\begin{tabular}[t]{l}6\end{tabular}}}}%
    \put(0.24320972,0.1328394){\makebox(0,0)[lt]{\lineheight{1.25}\smash{\begin{tabular}[t]{l}3\end{tabular}}}}%
    \put(0.68685166,0.13358014){\makebox(0,0)[lt]{\lineheight{1.25}\smash{\begin{tabular}[t]{l}4\end{tabular}}}}%
  \end{picture}%
\endgroup%
 \ar[r,"\mu_{II}", start anchor=real east, end anchor=real west, shift left] & 
\begingroup%
  \makeatletter%
  \providecommand\color[2][]{%
    \errmessage{(Inkscape) Color is used for the text in Inkscape, but the package 'color.sty' is not loaded}%
    \renewcommand\color[2][]{}%
  }%
  \providecommand\transparent[1]{%
    \errmessage{(Inkscape) Transparency is used (non-zero) for the text in Inkscape, but the package 'transparent.sty' is not loaded}%
    \renewcommand\transparent[1]{}%
  }%
  \providecommand\rotatebox[2]{#2}%
  \newcommand*\fsize{\dimexpr\f@size pt\relax}%
  \newcommand*\lineheight[1]{\fontsize{\fsize}{#1\fsize}\selectfont}%
  \ifx\svgwidth\undefined%
    \setlength{\unitlength}{64.79999828bp}%
    \ifx\svgscale\undefined%
      \relax%
    \else%
      \setlength{\unitlength}{\unitlength * \real{\svgscale}}%
    \fi%
  \else%
    \setlength{\unitlength}{\svgwidth}%
  \fi%
  \global\let\svgwidth\undefined%
  \global\let\svgscale\undefined%
  \makeatother%
  \begin{picture}(1,2.22222228)%
    \lineheight{1}%
    \setlength\tabcolsep{0pt}%
    \put(0.23395047,2.05555564){\makebox(0,0)[lt]{\lineheight{1.25}\smash{\begin{tabular}[t]{l}1\end{tabular}}}}%
    \put(0.68716029,2.05555564){\makebox(0,0)[lt]{\lineheight{1.25}\smash{\begin{tabular}[t]{l}2\end{tabular}}}}%
    \put(0.24320972,0.1328394){\makebox(0,0)[lt]{\lineheight{1.25}\smash{\begin{tabular}[t]{l}3\end{tabular}}}}%
    \put(0.68685166,0.13358014){\makebox(0,0)[lt]{\lineheight{1.25}\smash{\begin{tabular}[t]{l}4\end{tabular}}}}%
    \put(0,0){\includegraphics[width=\unitlength,page=1]{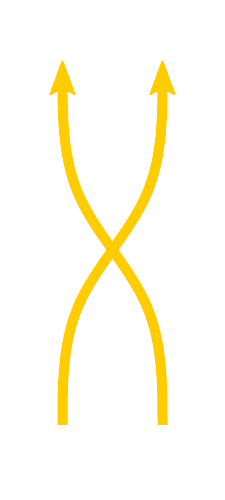}}%
  \end{picture}%
\endgroup%
 \ar[l, "\mu_{II}'", start anchor=real west, end anchor=real east, shift left]
    \end{tikzcd}
    \caption{Variations of the MOY II move.}
    \label{fig:moy-ii-variations}
\end{figure}

\subsection{MOY III}\label{sec:moy-iii}

Suppose $D$ and $D'$ are fully singular braid diagrams with $D'$ the result of applying an MOY III move to $D$ and reducing the number of crossings, as shown in \cref{fig:moy-iii}. In words, $D$ contains a fixed vertex $v_1$, free vertices $v_2$ and $v_3$, and edges $e_7: v_2 \to v_1$, $e_8: v_3 \to v_1$, and $e_9: v_3 \to v_2$. The diagram $D'$ is obtained from $D$ by removing the edges $e_7$, $e_8$, and $e_9$, merging $v_1$ and $v_3$ into a single fixed vertex, removing $v_2$, and merging $e_6$ into $e_3$.

\begin{figure}[ht]
    \centering
    \begin{tikzcd}
    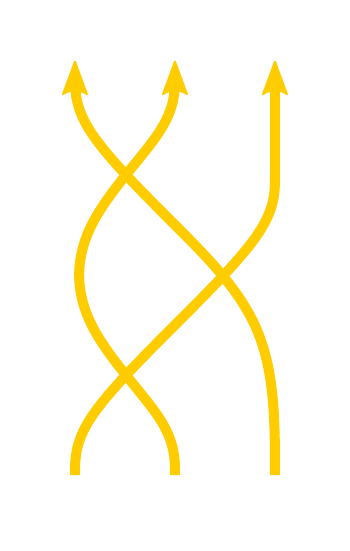 \ar[r,"\mu_{III}", start anchor=real east, end anchor=real west, shift left] & 
\begingroup%
  \makeatletter%
  \providecommand\color[2][]{%
    \errmessage{(Inkscape) Color is used for the text in Inkscape, but the package 'color.sty' is not loaded}%
    \renewcommand\color[2][]{}%
  }%
  \providecommand\transparent[1]{%
    \errmessage{(Inkscape) Transparency is used (non-zero) for the text in Inkscape, but the package 'transparent.sty' is not loaded}%
    \renewcommand\transparent[1]{}%
  }%
  \providecommand\rotatebox[2]{#2}%
  \newcommand*\fsize{\dimexpr\f@size pt\relax}%
  \newcommand*\lineheight[1]{\fontsize{\fsize}{#1\fsize}\selectfont}%
  \ifx\svgwidth\undefined%
    \setlength{\unitlength}{100.79999828bp}%
    \ifx\svgscale\undefined%
      \relax%
    \else%
      \setlength{\unitlength}{\unitlength * \real{\svgscale}}%
    \fi%
  \else%
    \setlength{\unitlength}{\svgwidth}%
  \fi%
  \global\let\svgwidth\undefined%
  \global\let\svgscale\undefined%
  \makeatother%
  \begin{picture}(1,1.57142863)%
    \lineheight{1}%
    \setlength\tabcolsep{0pt}%
    \put(0,0){\includegraphics[width=\unitlength,page=1]{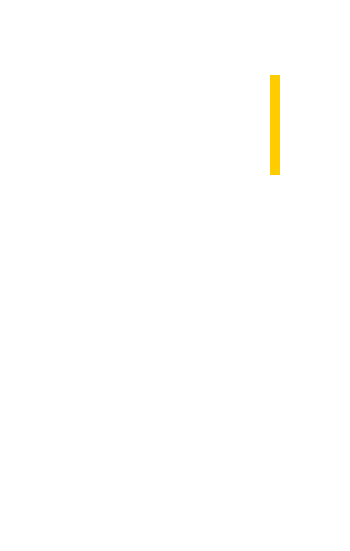}}%
    \put(0.19666662,1.42857149){\makebox(0,0)[lt]{\lineheight{1.25}\smash{\begin{tabular}[t]{l}1\end{tabular}}}}%
    \put(0.47746026,1.42857149){\makebox(0,0)[lt]{\lineheight{1.25}\smash{\begin{tabular}[t]{l}2\end{tabular}}}}%
    \put(0.76349202,1.42936514){\makebox(0,0)[lt]{\lineheight{1.25}\smash{\begin{tabular}[t]{l}3\end{tabular}}}}%
    \put(0.76349202,0.08539671){\makebox(0,0)[lt]{\lineheight{1.25}\smash{\begin{tabular}[t]{l}3\end{tabular}}}}%
    \put(0.19154755,0.0858729){\makebox(0,0)[lt]{\lineheight{1.25}\smash{\begin{tabular}[t]{l}4\end{tabular}}}}%
    \put(0.47706345,0.08619033){\makebox(0,0)[lt]{\lineheight{1.25}\smash{\begin{tabular}[t]{l}5\end{tabular}}}}%
    \put(0,0){\includegraphics[width=\unitlength,page=2]{moy-iii-prime.pdf}}%
  \end{picture}%
\endgroup%
 \ar[l, "\mu_{III}'", start anchor=real west, end anchor=real east, shift left]
    \end{tikzcd}
    \caption{An MOY III move.}
    \label{fig:moy-iii}
\end{figure}

\begin{theorem}\label{thm:moy-iii}
There exist $R(D')$-linear filtered quasi-isomorphisms $\muthree: \C(D) \to \C(D')$ and $\muthreeprime: \C(D') \to \C(D)$. Furthermore, $\C(D')$ is isomorphic to a direct summand of $\C(D)$. Under the identification $E_1(\C(-)) \iso \CKh^-(\sm(-))$, these maps induce the expected isomorphisms corresponding to planar isotopy.
\end{theorem}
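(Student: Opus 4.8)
The plan is to follow the template of the proofs of \cref{thm:moy-i} and \cref{thm:moy-ii}. First I would reduce to the case of fully singular braid diagrams $S$ and $S'$ related by an MOY III move, and then extend $\muthree$ and $\muthreeprime$ to partially singular diagrams $D$ and $D'$ by defining them componentwise on the cube of resolutions. As in the previous two subsections, maps defined componentwise on the cube are automatically filtered, and since every edge map $d_{I,J}$ is multiplication by an element of $R(D')$ (namely $1$ or $U_b - U_c$), any family of $R(D')$-linear maps $\C(D_I) \to \C(D'_I)$ automatically commutes with $d_1$. So it suffices to construct the maps, verify the quasi-isomorphism property, and exhibit the direct-summand splitting for fully singular $S$ and $S'$.

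For fully singular $S$ and $S'$, the heart of the argument is a module-theoretic analysis of $Q(S)$ over $R(S')$, in the spirit of \cref{prop:moy-i-prop} and the corresponding proposition for MOY II. Writing $R(S) = R(S')[U_6, U_7, U_8, U_9]$ with $R$ collecting the variables of edges outside the local picture, I would use the two linear relations $L(v_2)$ and $L(v_3)$ contributed by the free vertices inside the triangle, together with the nonlocal relations $N(\Omega)$ coming from disks $\Omega$ that enclose portions of the triangle region, to eliminate the four new variables $U_6, \dots, U_9$. The expected outcome is a presentation of $\C(S)$ as a tensor product of $\C(S')$ with an explicit matrix factorization of potential $0$; after a change of basis of this factorization (the operation recorded just before \cref{lemma:matrix-factorization-reduction}) one should be able to arrange that some entry $b_1$ of $\vec b$ is, up to a rational unit, monic in one of the eliminated variables and hence a non-zero-divisor. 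Invoking \cref{lemma:matrix-factorization-reduction} --- with the non-zero-divisor hypothesis supplied by $S' \in \dr$ together with this monicity, exactly as in the closing argument of the MOY I decomposition and in the proof of \cref{thm:vertex-relabeling} --- then collapses the factorization, exhibiting $\C(S)$ as $\C(S')$ together with a complementary summand whose associated graded is acyclic. Taking $\muthree$ to be the projection onto the $\C(S')$ factor and $\muthreeprime$ the splitting inclusion (as was done for $\muone$, $\muoneprime$ and $\mutwo$, $\mutwoprime$) yields the $R(D')$-linear filtered quasi-isomorphisms and, at the same time, the direct-summand claim.

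For the statement about the $E_1$-page, I would note that an MOY III move induces a planar isotopy of the unoriented smoothings, so $\sm(D) \cong \sm(D')$ and the maps that $\muthree$ and $\muthreeprime$ are supposed to induce on $\CKh^-$ are, up to chain homotopy, the identity. Tracing the projection and the splitting inclusion through the identification $E_1(\C(-)) \iso \CKh^-(\sm(-))$ --- exactly as in the final paragraphs of the proofs of \cref{thm:moy-i} and \cref{thm:moy-ii} --- confirms that they induce the expected maps.

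The main obstacle is the ring computation. The MOY III region has more edges and more interacting nonlocal relations than the MOY I or MOY II regions, so carefully carrying out the elimination of $U_6, \dots, U_9$, tracking the change of basis on $\ldplus$, and verifying the non-zero-divisor hypothesis required to apply \cref{lemma:matrix-factorization-reduction} (equivalently, to replace a mapping cone by its cokernel) is where essentially all of the work lies. Once that presentation is in hand, the remaining steps --- checking that the maps are filtered chain maps and carrying out the $E_1$-page identification --- are routine and parallel the earlier two cases.
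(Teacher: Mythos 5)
Your outer scaffolding (construct the maps for fully singular diagrams, extend componentwise over the cube, and use $R(D')$-linearity to see that they commute with the edge maps $d_1$) matches the paper. But the heart of your argument is deferred to an ``expected outcome'' of a ring computation you do not perform, and the tool you cite cannot deliver the conclusion you need. \cref{lemma:matrix-factorization-reduction} produces a quasi-isomorphism onto a quotient $C' \tensor R/(b_1)$; it does not produce a direct sum decomposition, so even if your collapse went through it would not yield the clause of \cref{thm:moy-iii} asserting that $\C(D')$ is a \emph{summand} of $\C(D)$ --- and that splitting, with its explicit complement $\Upsilon$ and explicit generator, is exactly what the later Reidemeister III, stabilization, and conjugation arguments feed into \cref{lemma:gaussian-elimination}. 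Moreover, the ``expected'' shape of the computation is not right: in the MOY III configuration the complement $\Upsilon$ is a nonzero ($E_1$-acyclic) complex, $Q(S)$ is not visibly $Q(S')$ tensored with anything, and the fixed vertex of $S$ carries linear forms in the internal variables $U_7, U_8$ that do not reduce, modulo the linear ideal, to the forms at the merged fixed vertex of $S'$ (they differ by a term of the form $U_3 - U_6$). After eliminating $U_7, U_8$ with $L(v_2), L(v_3)$ one is still left with two variables subject only to quadratic nonlocal relations, and you never identify the non-zero-divisor $b_1$ whose existence you attribute to $S' \in \dr$. So both the quasi-isomorphism claim and the summand claim remain unproved in your plan; a direct Khovanov--Rozansky-style decomposition could probably be pushed through, but it would require genuinely new work that your sketch does not supply.

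For comparison, the paper sidesteps the ring analysis entirely: it introduces the intermediate fully singular diagram $S''$ of \cref{fig:moy-iii-prime-prime}, obtained by replacing the middle free vertex by its oriented smoothing, and defines $\muthree = \mutwo \comp (1 \tensor \lsplus)$ and $\muthreeprime = ((U_9 - U_3) \tensor \lsplus) \comp \mutwoprime$, reusing the MOY II decomposition of $\C(S'')$ from \cref{sec:moy-ii}. The identity $\muthree \comp \muthreeprime = \id_{\C(S')}$ is then a two-line matrix computation (\cref{prop:moy-iii-splits}), which immediately gives the decomposition $\C(S) \iso \Upsilon\spannedby{1} \directsum \C(S')\spannedby{U_9 - U_3}$ and hence the summand statement; the only new inputs are the observation that $1 \tensor \lsplus$ and $(U_9 - U_3)\tensor\lsplus$ are well-defined (they are the edge maps one would have if $v_2$ were a crossing) and the already-proved \cref{thm:moy-ii}. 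If you want to salvage your approach, you would need either to carry out the MOY III module computation in full and prove the splitting directly, or to adopt the paper's composite construction.
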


As for MOY I and II, we will again start by defining these maps on fully singular braid diagrams then extending them to the cube of resolutions. Let $S$ and $S'$ be fully singular braid diagrams with $S'$ the result of applying an MOY III move to $S$ reducing the number of crossings as in \cref{fig:moy-iii}. We will prove that $\C(S)\cong \C(S')\oplus\Upsilon_L$, where $\Upsilon_L$ is some acyclic complex. Furthermore, the MOY III move has a nontrivial horizontal mirroring. We will prove that in the case where $S$ and $S'$ are connected by an MOY III move which is the mirror of \cref{fig:moy-iii}, we have $\C(S)\cong\C(S')\oplus\Upsilon_R$. While it is true that $\Upsilon_R=\Upsilon_L$, we will neither need this fact nor prove it in this paper. Nevertheless, we may refer to the complex as $\Upsilon$ regardless.

\begin{figure}[ht]
    \centering
    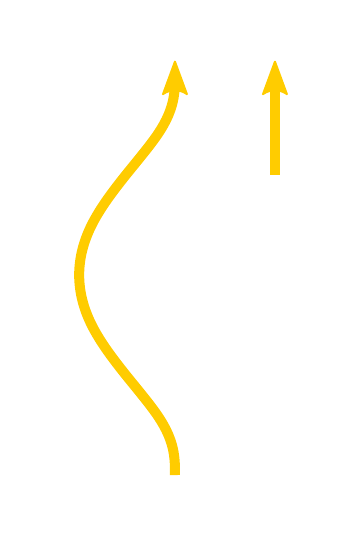
    \caption{The fully singular diagram $S''$ used in the definitions of $\muthree$ and $\muthreeprime$. }
    \label{fig:moy-iii-prime-prime}
\end{figure}

We will construct a map $\muthree:\C(S')\to\C(S)$ and another map $\muthreeprime:\C(S)\to\C(S')$ which splits $\muthree$, thus proving that $\C(S')$ is a direct summand of $\C(S)$. Let $S''$ be the fully singular braid diagram where the middle singular vertex $v_2$ is replaced by the oriented smoothing as in \cref{fig:moy-iii-prime-prime}, so that we may define the map $1 \tensor \lsplus: \C(S) \to \C(S'')$. We may then apply an MOY II move $\mutwo$ to the left two strands in $S''$ to get a map $\mu_{II}: \C(S'') \to \C(S')$. Therefore, we define $\muthree = \mutwo \comp (1 \tensor \lsplus)$. We can also reverse the order of these operations to define $\muthreeprime = ((U_9 - U_3) \tensor \ldplus) \comp \mutwoprime$. Note that the maps $1 \tensor \lsplus$ and $(U_9-U_3)\tensor \lsplus$ are well-defined since if $v_2$ were replaced by a positive or negative crossing, these would simply be multiples of the edge maps corresponding to resolutions of that crossing.

\begin{proposition}\label{prop:moy-iii-splits}
$\muthree$ splits $\muthreeprime$, i.e.\ $\muthree \comp \muthreeprime = \id_{\C(S')}$
\end{proposition}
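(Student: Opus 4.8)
The plan is to verify $\muthree\comp\muthreeprime=\id_{\C(S')}$ by a direct computation, pushing an element of $\C(S')$ through the four constituent maps, in the spirit of the explicit MOY~I maps for which the analogous composite $\muone\comp\muoneprime$ is visibly the identity. Unwinding the definitions gives
\[
\muthree\comp\muthreeprime \;=\; \mutwo\comp(1\tensor\lsplus)\comp\big((U_9-U_3)\tensor\lsplus\big)\comp\mutwoprime,
\]
a composition $\C(S')\to\C(S'')\to\C(S)\to\C(S'')\to\C(S')$. Each of the two inner arrows is $\phi\tensor\id_{\lsplus}$ for a map $\phi$ of quotient modules, so their composite is $(\phi_+\comp\phi_-)\tensor\id_{\lsplus}$, where $\phi_-\colon Q(S'')\to Q(S)$ is the module map with $\phi_-(1)=U_9-U_3$ and $\phi_+\colon Q(S)\to Q(S'')$ is the quotient map with $\phi_+(1)=1$.

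The crux is to identify $\phi_+\comp\phi_-$ with multiplication by the scalar $U_9-U_3$ on $Q(S'')$. Since $Q(S'')$ is cyclic, generated by $1$, the map $\phi_-$ acts by $[p]\mapsto(U_9-U_3)\,p$, and hence $\phi_+(\phi_-([p]))=(U_9-U_3)[p]$; independence of the chosen representative is exactly the well-definedness of $\phi_-$, equivalently the fact that $U_9-U_3$ annihilates the ideal $\ker\phi_+$ of $Q(S)$ — a consequence of the linear and nonlocal relations at $v_2$ that was already invoked when these maps were defined. Substituting this into the MOY~II decomposition $\C(S'')\iso\C(S')\spannedby{1}\directsum\C(S')\spannedby{U_9}$ of \cref{thm:moy-ii}, in which $\mutwoprime$ includes $\C(S')$ as the $\spannedby{1}$ summand and $\mutwo$ extracts the $\spannedby{U_9}$-component, one computes for $q\in\C(S')$: multiplication by the scalar $U_9-U_3$ carries $\mutwoprime(q)$ to an element whose $\spannedby{U_9}$-component is $q$, with $\spannedby{1}$-correction $-U_3q$ (a legitimate element of $\C(S')$, since $U_3$ survives as a variable of $R(S')$); applying $\mutwo$ then returns $q$. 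This shows $\muthree\comp\muthreeprime=\id_{\C(S')}$.

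I expect the main obstacle to be precisely this last identification and its compatibility with the MOY~II decomposition. One must be careful that the strand of $S''$ realizing the distinguished bigon edge ``$U_6$'' of \cref{thm:moy-ii} is the one carrying $U_9$ rather than the other bigon variable — otherwise the quadratic MOY~II relation rewrites $q\cdot U_9$ modulo the $\spannedby{1}$ summand and reverses the sign — and that $U_3$ indeed persists as a variable of $R(S')$ so that $-U_3q$ has no $\spannedby{U_9}$-component. Once these conventions are pinned down, the remainder is a routine unwinding of the definitions of $\mutwo$, $\mutwoprime$, and $\phi_\pm$.
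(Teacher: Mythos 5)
Your proof is correct and is essentially the paper's own argument: you collapse the middle composite $(1\tensor\lsplus)\comp((U_9-U_3)\tensor\lsplus)$ to multiplication by $U_9-U_3$ on $\C(S'')$, and then check that in the MOY II decomposition $\C(S'')\iso\C(S')\spannedby{1}\directsum\C(S')\spannedby{U_9}$ the composite $\mutwo\comp(U_9-U_3)\comp\mutwoprime$ sends $q\mapsto q$, which is exactly the matrix computation in the paper's proof. The convention points you flag are resolved as you expect: the right-hand bigon edge of $S''$ carries $U_9$, so the basis is $\{1,U_9\}$, and $U_3$ remains an edge variable of $S'$, so the correction term $-U_3q$ lies entirely in the $\spannedby{1}$ summand.
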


\begin{proof}
We expand out the definitions of $\muthree$ and $\muthreeprime$ to get
\begin{align*}
    \muthree \comp \muthreeprime &= \mutwo \comp (1 \tensor \lsplus) \comp ((U_9 - U_3) \tensor \lsplus) \comp \mutwoprime \\
    &= \mutwo \comp ((U_9 - U_3) \tensor \lsplus) \comp \mutwoprime \\
    &= \pmat{0 & 1} \pmat{-U_3 & -U_4U_5 \\ 1 & U_4 + U_5 - U_3} \pmat{1 \\ 0} \tensor \lsplus \\
    &= \pmat{1} \tensor \lsplus \\
    &= \id_{\C(S')} \,. \qedhere
\end{align*}
\end{proof}

Therefore, we get a direct sum decomposition $\C(S) \iso \Upsilon\spannedby{1} \directsum \C(S')\spannedby{U_9 - U_3}$. For partially singular braid diagrams $D$ and $D'$ related by an MOY III move, extend both maps to the cube of resolutions by defining $\muthree: \C(D_I) \to \C(D'_I)$ and $\muthreeprime: \C(D'_I) \to \C(D_I)$ as above for each $I \in \{0,1\}^{c(D)}$.

\begin{proof}[Proof of \cref{thm:moy-iii}]
It is clear that $\muthree$ and $\muthreeprime$ are filtered maps, since they are defined component-wise on the cube of resolutions. Furthermore, we can extend our proof of \cref{prop:moy-iii-splits} to partially-singular braid diagrams since both the edge maps and MOY II maps are defined on such diagrams, so $\C(D')$ really is a summand of $\C(D)$.

We also need to check that $\muthree$ and $\muthreeprime$ are chain maps, i.e.\ that they commute with the edge maps $d_1$. Let $I,J \in \{0,1\}^{c(D)}$ with $I \coveredby J$. If $I$ and $J$ differ at a positive crossing, then $d_{I,J}$ is given by $\oldphi_+ \tensor \ldplus = 1 \tensor \ldplus$. Otherwise, $d_{I,J}$ is given by $\oldphi_- \tensor \ldplus = (U_b - U_c) \tensor \ldplus$. Either way, the edge maps are given by multiplication by an element of $R(D')$. Since $\muthree$ and $\muthreeprime$ were defined to be $R(D')$-linear, we get that they commute with $d_1$.
\end{proof}

As before, a similar argument shows that we also have MOY III decompositions for the cases in \cref{fig:moy-iii-variations}.

\begin{figure}[ht]
    \centering
    \begin{tikzcd}[column sep=small]
    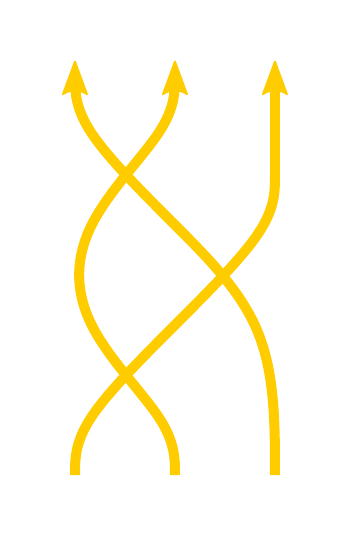 \ar[r,"\mu_{III}", start anchor=real east, end anchor=real west, shift left] &  \ar[l, "\mu_{III}'", start anchor=real west, end anchor=real east, shift left] &[-25pt] 
\begingroup%
  \makeatletter%
  \providecommand\color[2][]{%
    \errmessage{(Inkscape) Color is used for the text in Inkscape, but the package 'color.sty' is not loaded}%
    \renewcommand\color[2][]{}%
  }%
  \providecommand\transparent[1]{%
    \errmessage{(Inkscape) Transparency is used (non-zero) for the text in Inkscape, but the package 'transparent.sty' is not loaded}%
    \renewcommand\transparent[1]{}%
  }%
  \providecommand\rotatebox[2]{#2}%
  \newcommand*\fsize{\dimexpr\f@size pt\relax}%
  \newcommand*\lineheight[1]{\fontsize{\fsize}{#1\fsize}\selectfont}%
  \ifx\svgwidth\undefined%
    \setlength{\unitlength}{100.79999828bp}%
    \ifx\svgscale\undefined%
      \relax%
    \else%
      \setlength{\unitlength}{\unitlength * \real{\svgscale}}%
    \fi%
  \else%
    \setlength{\unitlength}{\svgwidth}%
  \fi%
  \global\let\svgwidth\undefined%
  \global\let\svgscale\undefined%
  \makeatother%
  \begin{picture}(1,1.57142863)%
    \lineheight{1}%
    \setlength\tabcolsep{0pt}%
    \put(0,0){\includegraphics[width=\unitlength,page=1]{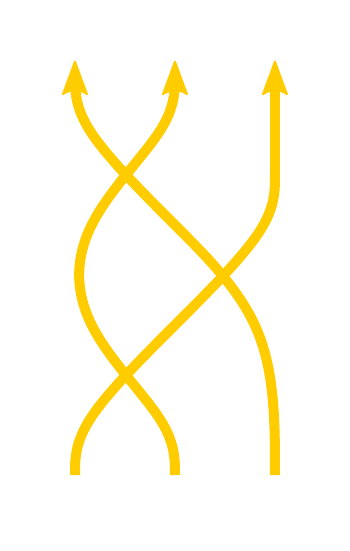}}%
    \put(0.19666662,1.42857149){\makebox(0,0)[lt]{\lineheight{1.25}\smash{\begin{tabular}[t]{l}1\end{tabular}}}}%
    \put(0.47746026,1.42857149){\makebox(0,0)[lt]{\lineheight{1.25}\smash{\begin{tabular}[t]{l}2\end{tabular}}}}%
    \put(0.76349202,1.42936514){\makebox(0,0)[lt]{\lineheight{1.25}\smash{\begin{tabular}[t]{l}3\end{tabular}}}}%
    \put(0.19154755,0.0858729){\makebox(0,0)[lt]{\lineheight{1.25}\smash{\begin{tabular}[t]{l}4\end{tabular}}}}%
    \put(0.47706345,0.08619033){\makebox(0,0)[lt]{\lineheight{1.25}\smash{\begin{tabular}[t]{l}5\end{tabular}}}}%
    \put(0.76269834,0.08539671){\makebox(0,0)[lt]{\lineheight{1.25}\smash{\begin{tabular}[t]{l}6\end{tabular}}}}%
    \put(0.54892849,0.97166662){\makebox(0,0)[lt]{\lineheight{1.25}\smash{\begin{tabular}[t]{l}7\end{tabular}}}}%
    \put(0.10142845,0.75738094){\makebox(0,0)[lt]{\lineheight{1.25}\smash{\begin{tabular}[t]{l}8\end{tabular}}}}%
    \put(0.54880947,0.54309526){\makebox(0,0)[lt]{\lineheight{1.25}\smash{\begin{tabular}[t]{l}9\end{tabular}}}}%
  \end{picture}%
\endgroup%
 \ar[r,"\mu_{III}", start anchor=real east, end anchor=real west, shift left] & 
\begingroup%
  \makeatletter%
  \providecommand\color[2][]{%
    \errmessage{(Inkscape) Color is used for the text in Inkscape, but the package 'color.sty' is not loaded}%
    \renewcommand\color[2][]{}%
  }%
  \providecommand\transparent[1]{%
    \errmessage{(Inkscape) Transparency is used (non-zero) for the text in Inkscape, but the package 'transparent.sty' is not loaded}%
    \renewcommand\transparent[1]{}%
  }%
  \providecommand\rotatebox[2]{#2}%
  \newcommand*\fsize{\dimexpr\f@size pt\relax}%
  \newcommand*\lineheight[1]{\fontsize{\fsize}{#1\fsize}\selectfont}%
  \ifx\svgwidth\undefined%
    \setlength{\unitlength}{100.79999828bp}%
    \ifx\svgscale\undefined%
      \relax%
    \else%
      \setlength{\unitlength}{\unitlength * \real{\svgscale}}%
    \fi%
  \else%
    \setlength{\unitlength}{\svgwidth}%
  \fi%
  \global\let\svgwidth\undefined%
  \global\let\svgscale\undefined%
  \makeatother%
  \begin{picture}(1,1.57142863)%
    \lineheight{1}%
    \setlength\tabcolsep{0pt}%
    \put(0,0){\includegraphics[width=\unitlength,page=1]{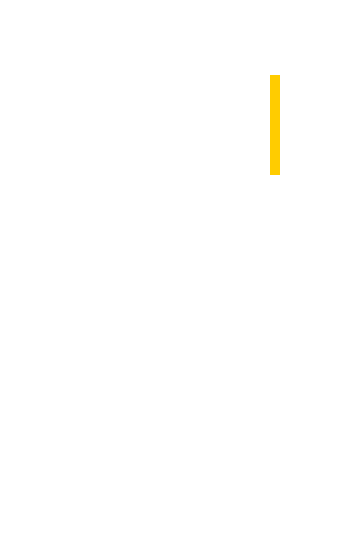}}%
    \put(0.19666662,1.42857149){\makebox(0,0)[lt]{\lineheight{1.25}\smash{\begin{tabular}[t]{l}1\end{tabular}}}}%
    \put(0.47746026,1.42857149){\makebox(0,0)[lt]{\lineheight{1.25}\smash{\begin{tabular}[t]{l}2\end{tabular}}}}%
    \put(0.76349202,1.42936514){\makebox(0,0)[lt]{\lineheight{1.25}\smash{\begin{tabular}[t]{l}3\end{tabular}}}}%
    \put(0.76349202,0.08539671){\makebox(0,0)[lt]{\lineheight{1.25}\smash{\begin{tabular}[t]{l}3\end{tabular}}}}%
    \put(0.19154755,0.0858729){\makebox(0,0)[lt]{\lineheight{1.25}\smash{\begin{tabular}[t]{l}4\end{tabular}}}}%
    \put(0.47706345,0.08619033){\makebox(0,0)[lt]{\lineheight{1.25}\smash{\begin{tabular}[t]{l}5\end{tabular}}}}%
    \put(0,0){\includegraphics[width=\unitlength,page=2]{moy-iii-c-prime.pdf}}%
  \end{picture}%
\endgroup%
 \ar[l, "\mu_{III}'", start anchor=real west, end anchor=real east, shift left]
    \end{tikzcd}
    \caption{Variations of the MOY III move.}
    \label{fig:moy-iii-variations}
\end{figure}
\section{Invariance}\label{sec:invariance}

In this section, we prove that $\C(\beta)$ is an invariant of the braid closure $\cl(\beta)$ by showing that it is invariant under each of the four moves of \cref{thm:markov}. The first two moves, Reidmeister II and III, apply to any partially singular braid diagram $D$, whereas the second two moves, stabilization and conjugation, are specific to diagrams of the form $D = I_n(\beta)$. 

\subsection{Reidemeister II}
We begin by proving invariance under Reidemeister II moves. There are two distinct such moves, but they are mirror images of each other, and their proofs are almost identical. We prove one of the cases in detail below. 

\begin{figure}[ht]
    \centering
    \begin{tikzcd}
\begingroup%
  \makeatletter%
  \providecommand\color[2][]{%
    \errmessage{(Inkscape) Color is used for the text in Inkscape, but the package 'color.sty' is not loaded}%
    \renewcommand\color[2][]{}%
  }%
  \providecommand\transparent[1]{%
    \errmessage{(Inkscape) Transparency is used (non-zero) for the text in Inkscape, but the package 'transparent.sty' is not loaded}%
    \renewcommand\transparent[1]{}%
  }%
  \providecommand\rotatebox[2]{#2}%
  \newcommand*\fsize{\dimexpr\f@size pt\relax}%
  \newcommand*\lineheight[1]{\fontsize{\fsize}{#1\fsize}\selectfont}%
  \ifx\svgwidth\undefined%
    \setlength{\unitlength}{72bp}%
    \ifx\svgscale\undefined%
      \relax%
    \else%
      \setlength{\unitlength}{\unitlength * \real{\svgscale}}%
    \fi%
  \else%
    \setlength{\unitlength}{\svgwidth}%
  \fi%
  \global\let\svgwidth\undefined%
  \global\let\svgscale\undefined%
  \makeatother%
  \begin{picture}(1,2.0999999)%
    \lineheight{1}%
    \setlength\tabcolsep{0pt}%
    \put(0,0){\includegraphics[width=\unitlength,page=1]{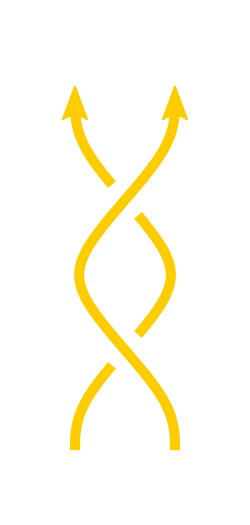}}%
    \put(0.26888881,0.11955524){\makebox(0,0)[lt]{\lineheight{1.25}\smash{\begin{tabular}[t]{l}3\end{tabular}}}}%
    \put(0.66816659,0.12022191){\makebox(0,0)[lt]{\lineheight{1.25}\smash{\begin{tabular}[t]{l}4\end{tabular}}}}%
    \put(0.27533326,1.89999991){\makebox(0,0)[lt]{\lineheight{1.25}\smash{\begin{tabular}[t]{l}1\end{tabular}}}}%
    \put(0.66844436,1.89999991){\makebox(0,0)[lt]{\lineheight{1.25}\smash{\begin{tabular}[t]{l}2\end{tabular}}}}%
    \put(0.1178888,1.00111104){\makebox(0,0)[lt]{\lineheight{1.25}\smash{\begin{tabular}[t]{l}5\end{tabular}}}}%
    \put(0.81777767,1.00111104){\makebox(0,0)[lt]{\lineheight{1.25}\smash{\begin{tabular}[t]{l}6\end{tabular}}}}%
  \end{picture}%
\endgroup%
 \ar[<->,r, start anchor=real east, end anchor=real west, shift left] & 
\begingroup%
  \makeatletter%
  \providecommand\color[2][]{%
    \errmessage{(Inkscape) Color is used for the text in Inkscape, but the package 'color.sty' is not loaded}%
    \renewcommand\color[2][]{}%
  }%
  \providecommand\transparent[1]{%
    \errmessage{(Inkscape) Transparency is used (non-zero) for the text in Inkscape, but the package 'transparent.sty' is not loaded}%
    \renewcommand\transparent[1]{}%
  }%
  \providecommand\rotatebox[2]{#2}%
  \newcommand*\fsize{\dimexpr\f@size pt\relax}%
  \newcommand*\lineheight[1]{\fontsize{\fsize}{#1\fsize}\selectfont}%
  \ifx\svgwidth\undefined%
    \setlength{\unitlength}{72bp}%
    \ifx\svgscale\undefined%
      \relax%
    \else%
      \setlength{\unitlength}{\unitlength * \real{\svgscale}}%
    \fi%
  \else%
    \setlength{\unitlength}{\svgwidth}%
  \fi%
  \global\let\svgwidth\undefined%
  \global\let\svgscale\undefined%
  \makeatother%
  \begin{picture}(1,2.0999999)%
    \lineheight{1}%
    \setlength\tabcolsep{0pt}%
    \put(0,0){\includegraphics[width=\unitlength,page=1]{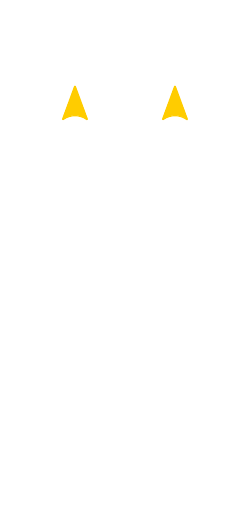}}%
    \put(0.27533324,0.12066646){\makebox(0,0)[lt]{\lineheight{1.25}\smash{\begin{tabular}[t]{l}1\end{tabular}}}}%
    \put(0.27533324,1.89999991){\makebox(0,0)[lt]{\lineheight{1.25}\smash{\begin{tabular}[t]{l}1\end{tabular}}}}%
    \put(0.66844429,0.11955539){\makebox(0,0)[lt]{\lineheight{1.25}\smash{\begin{tabular}[t]{l}2\end{tabular}}}}%
    \put(0.66844429,1.89999991){\makebox(0,0)[lt]{\lineheight{1.25}\smash{\begin{tabular}[t]{l}2\end{tabular}}}}%
    \put(0,0){\includegraphics[width=\unitlength,page=2]{r-ii-prime.pdf}}%
  \end{picture}%
\endgroup%

    \end{tikzcd}
    \caption{A Reidemeister II move.}
    \label{fig:r-ii}
\end{figure}

\begin{theorem}\label{thm:r-ii}
If $D$ and $D'$ are two partially singular braid diagrams that differ by a Reidemeister II move, then $\C(D) \homotopic_1 \C(D')$ over $R(D')$.
\end{theorem}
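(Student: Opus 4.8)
The plan is to run the standard cube-of-resolutions and Gaussian-elimination argument for Reidemeister~II invariance, with the role of ``delooping a circle'' played by the MOY~II decomposition of \cref{thm:moy-ii} and the role of ``planar isotopy of a resolution'' absorbed by \cref{thm:vertex-relabeling}. The two Reidemeister~II moves are horizontal mirrors of one another, so I would prove one and note that the other follows identically using the mirrored MOY~II decompositions of \cref{fig:moy-ii-variations}. Fix the two crossings $c_1,c_2$ of $D$ taking part in the move, say $c_1$ positive and $c_2$ negative, and for $i,j\in\{0,1\}$ let $D_{ij}$ be the partially singular braid diagram obtained by resolving $c_1$ to $i$ and $c_2$ to $j$ (leaving the other crossings alone). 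Grouping the summands of $\C(D)$ by the resolution of $c_1,c_2$ exhibits $\C(D)$ as the total complex of the square
\begin{equation*}
\xymatrix{
\C(D_{00}) \ar[r] \ar[d] & \C(D_{01}) \ar[d] \\
\C(D_{10}) \ar[r] & \C(D_{11})
}
\end{equation*}
whose four arrows are the edge maps $\phi_\pm$ and each of whose corners is itself the sub-cube of $\C(D)$ over the remaining (spectator) crossings. Geometrically, $D_{10}$ is the Reidemeister-reduced diagram $D'$ (up to bivalent vertices), the diagrams $D_{00}$ and $D_{11}$ each carry a single singular vertex on the two strands, and $D_{01}$ contains an MOY~II bigon.

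Next I would simplify the $D_{01}$ corner. After relabeling one of its two (a priori free) singular vertices as fixed --- this keeps the diagram in $\dr$ and changes $\C$ only up to $\homotopic$ by \cref{thm:vertex-relabeling} --- \cref{thm:moy-ii} applies and gives a splitting $\C(D_{01})\iso \C(E)\spannedby{1}\directsum\C(E)\spannedby{U_6}$ of filtered complexes, where $E$ is the single-singular-vertex diagram obtained by merging the bigon. The key computation is then to write the two edge maps incident to $\C(D_{01})$ in these terms. The map $\C(D_{00})\to\C(D_{01})$ is the map $\phi_-$ that singularizes $c_2$ --- multiplication by $U_b-U_c$ --- and $\C(D_{01})\to\C(D_{11})$ is the map $\phi_+$ that smooths $c_1$ --- the canonical quotient map; I expect, exactly as in the usual delooping computation in Khovanov homology, that the composite $\C(D_{00})\to\C(D_{01})\twoheadrightarrow\C(E)\spannedby{U_6}$ and the composite $\C(E)\spannedby{1}\hookrightarrow\C(D_{01})\to\C(D_{11})$ are both isomorphisms, landing in (resp.\ emanating from) complementary MOY~II summands.

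Granting this, two successive Gaussian eliminations finish the reduction: first cancel $\C(D_{00})$ against $\C(E)\spannedby{U_6}$, then cancel $\C(E)\spannedby{1}$ against $\C(D_{11})$. Because the square has no diagonal component, $D_{00}$ is its source, and $D_{11}$ is its sink, both zig-zag correction terms produced by these cancellations vanish, so the surviving complex is simply $\C(D_{10})\iso\C(D')$ with its internal differential. Finally, tracking the weight filtration through these steps --- in particular the $\cone_1$ convention, under which the two cancelled pairs straddle a single filtration jump --- shows that the resulting chain homotopy equivalence is an $E_1$-quasi-isomorphism rather than a filtered quasi-isomorphism; this is precisely why the conclusion is $\C(D)\homotopic_1\C(D')$ and not $\C(D)\homotopic\C(D')$. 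Reassembling over the spectator cube and over both mirror cases gives the theorem over $R(D')$.

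The main obstacle is the middle step: verifying that those two particular composites of edge-map components with the MOY~II inclusion and projection are genuine isomorphisms. Carrying this out means unwinding the local quotient rings $Q(D_{ij})$ in a neighborhood of the Reidemeister~II tangle, writing the MOY~II splitting explicitly in the basis $\{1,U_6\}$, and checking that multiplication by $U_b-U_c$ followed by the projection (respectively, the inclusion followed by the quotient map) is invertible; one must also confirm at each stage that the diagrams and relabelings involved remain in $\dr$, so that \cref{thm:moy-ii} and \cref{thm:vertex-relabeling} genuinely apply. By comparison, the filtration bookkeeping and the vanishing of the Gaussian-elimination corrections are routine once the shape of the square is in hand.
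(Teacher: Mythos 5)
Your overall strategy is the same as the paper's: form the square of resolutions over the two crossings (with $D_{10}\iso D'$, the bigon at $D_{01}$, and a single singular vertex at $D_{00}$ and $D_{11}$), deloop the bigon corner via an MOY~II splitting, check that the two relevant matrix entries are units, cancel twice by \cref{lemma:gaussian-elimination} with no correction terms, and track the weight filtration to land on $\homotopic_1$ rather than $\homotopic$. Your predicted composites agree with the paper's computed maps $\pmat{-U_3\\1}$ and $\pmat{1 & U_2}$, and your remarks about the source/sink positions of $D_{00}$ and $D_{11}$ and the absence of zig-zag corrections are correct.

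The one step that does not work as you describe it is the justification for the splitting at $D_{01}$. The two singular vertices of $D_{01}$ arise from resolving crossings, and by construction such vertices are necessarily \emph{free}: their linear terms sit in the quotient $Q(D_{01})$, and the edge maps $\oldphi_\pm$ into and out of $\C(D_{01})$ are defined precisely as maps of these quotient modules. You cannot relabel one of them as fixed inside the cube, and \cref{thm:vertex-relabeling} would not help even if you could: it applies to singular vertices of the partially singular diagram (not to singularized crossings) and produces only a zig-zag of filtered quasi-isomorphisms, whereas \cref{lemma:gaussian-elimination} needs an honest direct-sum decomposition of the actual corner, compatible with the genuine edge maps, with an isomorphism as one matrix entry. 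After replacing $\C(D_{01})$ by a merely weakly equivalent complex you no longer have a strictly commuting square with the original $\oldphi_\pm$, so the cancellation argument cannot be run without transporting the edge maps through the zig-zag. The correct input here is the variation of the MOY~II decomposition in which \emph{both} vertices of the bigon are free (the variations recorded in \cref{fig:moy-ii-variations}); this gives an $R(D')$-linear isomorphism $\C(D_{01})\iso \C(D_{11})\spannedby{1}\directsum\C(D_{00})\spannedby{U_6}$ on the nose, which is exactly what the paper uses. With that substitution in place of your relabeling step, the remainder of your outline goes through as in the paper.
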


\begin{proof}

Let $D$ and $D'$ be the diagrams in \cref{fig:r-ii}, with $D'$ the result of eliminating two crossings from $D'$ by means of a Reidemeister II move. We will use \cref{lemma:gaussian-elimination} to simplify $\C(D)$ and $\C(D')$ to see they have the same homotopy type. Label the edges of $D$ with variables $U_1,\dots,U_6$ as in \cref{fig:r-ii}, and order the crossings from top to bottom. Let $\phi_1 = \phi_+ = 1$ be the edge map corresponding to the top (positive) vertex, and let $\phi_2 = \phi_- = U_6 - U_3$ be the edge map corresponding to the bottom (negative) vertex. Then, fixing a sign assignment without loss of generality, we expand the cube of resolutions for $\C(D)$ as:

\begin{center}
\begin{tikzcd}[ampersand replacement=\&, row sep = huge, column sep = huge]
    \C(D_{00}) \ar[r,"\phi_1"] \ar[d,"\phi_2"] \& \C(D_{10}) \ar[d,"-\phi_2"] \\
    \C(D_{01}) \ar[r,"\phi_1"] \& \C(D_{11}) \nospaceperiod
\end{tikzcd}
\end{center} 

Note that $\C(D_{10})$ is isomorphic to $\C(D')$ via the removal of bivalent vertices, so our goal is to show that $\C(D) \homotopic_1 \C(D_{10})$ as a filtered chain complex over $R(D')$. We will work over the larger ring $R(D')[U_3,U_4]$, but will not enforce linearity with respect to $U_5$ or $U_6$. First, note that $\C(D_{00}) \iso \C(D_{11})$. We see that we can apply the MOY II decomposition from \cref{sec:moy-ii} to write $\C(D_{01}) = \C(D_{11})\spannedby{1} \directsum \C(D_{00})\spannedby{U_6}$. We compute the maps induced by $\phi_1$ and $\phi_2$ on these decompositions to get an isomorphic cube of resolutions:

\begin{center}
\begin{tikzcd}[ampersand replacement=\&, row sep = huge, column sep = huge]
    \C(D_{00}) \ar[r,"1"] \ar[d,"\pmat{-U_3 \\ 1}"] \& \C(D_{10}) \ar[d,"U_3-U_2"] \\
    \C(D_{11})\spannedby{1} \directsum \C(D_{00})\spannedby{U_6} \ar[r,"\pmat{1 & U_2}"] \& \C(D_{11}) \nospaceperiod
\end{tikzcd}
\end{center}

This is the first of several times we will use \cref{lemma:gaussian-elimination} to simplify a cube of resolutions in this paper. This key lemma allows us to effectively cancel out isomorphisms of direct summands in cubes. In this case, it yields the $E_1$-quasi-isomorphic complex: 

\begin{center}
\begin{tikzcd}[ampersand replacement=\&, row sep = huge, column sep = huge]
    0 \ar[r] \ar[d] \& \C(D_{10}) \ar[d] \\
    0 \ar[r] \& 0 \nospaceperiod
\end{tikzcd}
\end{center}

We conclude by noting that $\C(D_{10}) \iso \C(D')$ as chain complexes over $R(D')[U_3,U_4]$. This proves invariance under one type of Reidemeister II move; the proof of the mirror-image move is analogous.

\end{proof}

\subsection{Reidemeister III}

We will prove invariance under the Reidemeister III move shown in \cref{fig:r-iii}, which corresponds to sliding a strand over a positive crossing. In terms of the braid group, it represents the relation $\sigma_i \sigma_{i+1} \sigma_i = \sigma_{i+1} \sigma_i \sigma_{i+1}$. All other variations of the Reidemeister III move follow from this one plus the Reidemeister II invariance result from \cref{thm:r-ii}.

\begin{figure}[ht]
    \centering
    \begin{tikzcd}
    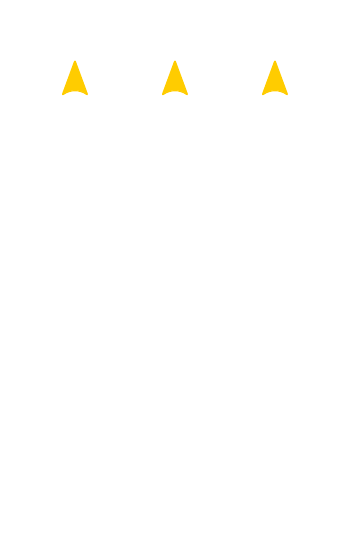 \ar[<->,r, start anchor=real east, end anchor=real west, shift left] & 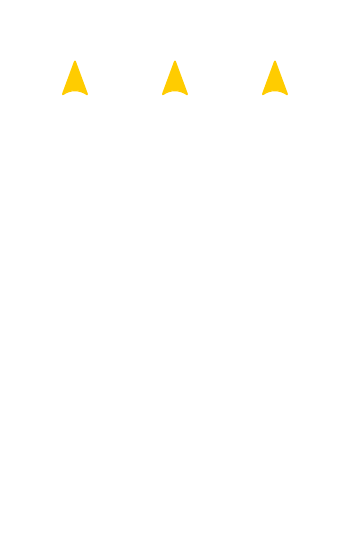
    \end{tikzcd}
    \caption{A Reidemeister III move.}
    \label{fig:r-iii}
\end{figure}

\begin{theorem}\label{thm:r-iii}
If $D$ and $D'$ are two partially singular braid diagrams that differ by a Reidemeister III move, then $\C(D) \homotopic_1 \C(D')$.
\end{theorem}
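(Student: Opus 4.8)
The plan is to follow the same strategy as in the proof of \cref{thm:r-ii}: expand both $\C(D)$ and $\C(D')$ as cubes of resolutions over the three positive crossings participating in the move, simplify each vertex of the cube using the MOY decompositions of \cref{sec:moy-moves} together with removal of bivalent vertices, and then cancel the contractible pieces with \cref{lemma:gaussian-elimination} until both cubes collapse to the same complex. As in the R2 proof we would work over $R(D')$ enlarged by the variables attached to the internal edges destroyed by the move (the analogues of $U_7,U_8,U_9$ in \cref{fig:r-iii}), not enforcing linearity in those variables, and descend back to $R(D')$ at the end.

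First I would label the three crossings of $D$ — all positive, so every edge map is $\phi_+ = 1$ — fix a sign assignment, and write out the $2^3$-cube. The crucial structural observation is that the all-$0$ resolution of $D$ is a fully singular diagram whose three singular vertices are pairwise joined, i.e.\ a (possibly mirrored or rotated) MOY III configuration, so \cref{thm:moy-iii} gives $\C(D_{000}) \iso \C(S) \directsum \Upsilon$ with $S$ the MOY-III-reduced singular diagram and $\Upsilon$ acyclic. The resolutions in which exactly one crossing is $1$-smoothed become, after deleting the resulting bivalent vertices, singular diagrams with two singular vertices — and the one in which the ``middle'' crossing is smoothed is a bigon, which splits by \cref{thm:moy-ii} into two copies of a single-vertex diagram — while the resolutions with two or three $1$'s reduce (again by bivalent-vertex removal) to diagrams with one or zero singular vertices. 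For each cube edge $I \coveredby J$ I would record the map induced by $\phi_+ \tensor \ldplus$ on these simplified descriptions, using the explicit column- and row-matrix blocks computed in \cref{sec:moy-i,sec:moy-ii,sec:moy-iii} (the same kind of block maps that appeared in the proof of \cref{thm:r-ii}).

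Next I would iterate \cref{lemma:gaussian-elimination}: cancel the copies coming from the MOY II bigon against their images in the adjacent weight-$0$ and weight-$2$ vertices, cancel the acyclic summand $\Upsilon$ of $\C(D_{000})$, and cancel the fully smoothed weight-$3$ vertex against its neighbors, leaving only a short complex built from $\C(S')$ plus a bounded amount of data coming from the ``outer'' part of the local picture. Carrying out the identical procedure for $D'$ produces the analogous short complex built from $\C(\overline{S}')$, where $\overline{S}'$ is the reduction of the mirrored MOY III triangle. Since $S'$ and $\overline{S}'$ have the same unoriented smoothing and differ only by a planar isotopy together with a relabeling of which vertices are fixed, \cref{thm:moy-i}, \cref{thm:moy-iii} and \cref{thm:vertex-relabeling} identify $\C(S') \homotopic \C(\overline{S}')$ compatibly with the surviving maps, so the two reduced complexes coincide and $\C(D) \homotopic_1 \C(D')$ over $R(D')$.

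The main obstacle is the bookkeeping in the middle step: after each MOY decomposition and each Gaussian elimination one must recompute the surviving cube maps in the new bases, and the cancellations are legitimate only if the relevant block maps are genuinely isomorphisms over the enlarged ring; one must also check that the filtration shifts are exactly those of $\cone_1$ rather than $\cone$, so that the output is an $E_1$-quasi-isomorphism and not merely a quasi-isomorphism of underlying complexes, and that all the $R(D')$-linearity needed to descend from the enlarged ring back to $R(D')$ holds on the nose — the same subtleties that made the R2 computation delicate, now over a larger cube.
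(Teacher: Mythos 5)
Your overall skeleton (expand the three-crossing cube, reduce the corners with the MOY II/III decompositions, then Gaussian-eliminate) is the same as the paper's, but two of your steps do not go through as stated. First, you cannot ``cancel the fully smoothed weight-3 vertex against its neighbors.'' The maps into $\C(D_{111})$ from $\C(D_{101})$ and $\C(D_{011})$ are $\phi_+\tensor\id = 1\tensor\id$, but these are maps between \emph{different} quotient modules $Q(D_{101})\to Q(D_{111})$ (the singularized middle crossing contributes an extra free vertex, hence an extra linear relation, on one side only); multiplication by $1$ here is a merge-type map, not an isomorphism, so \cref{lemma:gaussian-elimination} does not apply to that corner. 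The only legitimate cancellations are the ones the paper makes: after replacing $\C(D_{000})$ by $\C(D_{110})$ (discarding $\Upsilon$ via \cref{lemma:homotopic-cubes}) and splitting $\C(D_{010})\iso\C(D_{110})\spannedby{1}\directsum\C(D_{110})\spannedby{U_9}$ by \cref{thm:moy-ii}, the induced maps are $\pmat{-U_3\\1}$ and $\pmat{1 & U_2}$, and one cancels exactly the two $1$-entries. What survives is a five-corner complex on $\C(D_{100}),\C(D_{001}),\C(D_{101}),\C(D_{011}),\C(D_{111})$, together with a cancellation-induced map $\C(D_{100})\to\C(D_{011})$ which one must compute (it is multiplication by $1$); the weight-3 corner is still there.

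Second, your endgame is both vague and partly circular. Identifying the two reduced complexes by saying the reduced singular pieces ``have the same unoriented smoothing and differ by planar isotopy and relabeling, so \cref{thm:moy-i}, \cref{thm:moy-iii} and \cref{thm:vertex-relabeling} identify them compatibly with the surviving maps'' assumes precisely the kind of compatibility that has to be verified, and equality of unoriented smoothings can never by itself identify complexes (invariance under smoothing-preserving moves is the statement being proved). The paper's conclusion is much more concrete: after performing the identical reduction on $D'$, the surviving resolutions of $D$ and $D'$ are \emph{literally the same} partially singular diagrams under the reflection swapping the $100$- and $001$-resolutions and the $101$- and $011$-resolutions (fixing $111$), and every surviving map, including the induced one, is $\pm 1$; hence this swap is an honest isomorphism of filtered complexes, giving $\C(D)\homotopic_1\C(D')$. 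To repair your argument you need to keep the weight-3 corner, compute the surviving (and induced) maps on both sides explicitly, and exhibit this isomorphism rather than appeal to smoothing-level coincidence.
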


\begin{proof}

Let $D$ be the diagram on the left and $D'$ the diagram on the right in \cref{fig:r-iii}. We will use \cref{lemma:gaussian-elimination} to simplify $\C(D)$ and $\C(D')$ to see they have the same homotopy type. Label the edges of $D$ with variables $U_1,\dots,U_9$ as in \cref{fig:r-iii}, and order the crossings from top to bottom. We expand the cube of resolutions for $\C(D)$ as:

\begin{center}
\begin{tikzcd}[ampersand replacement=\&, row sep = huge, column sep = huge]
    \& \C(D_{100}) \ar[r] \ar[dr] \& \C(D_{110}) \ar[dr,"-"] \\
    \C(D_{000}) \ar[ur,"-"] \ar[r] \ar[dr] \& \C(D_{010}) \ar[ur] \ar[dr,"-" near start] \& \C(D_{101}) \ar[r] \& \C(D_{111}) \\
    \& \C(D_{001}) \ar[ur] \ar[r] \& \C(D_{011}) \ar[ur,"-"] \nospaceperiod
\end{tikzcd}
\end{center}

Since our local picture of $D$ consists of only positive crossings, all edge maps in this cube are given by $\phi_+ = 1$ up to a sign assignment, which we take to be the one in the above cube of resolutions without loss of generality.

By \cref{thm:moy-iii}, we note $\C(D_{000})\iso\C(D_{110})\directsum\Upsilon$, where $\Upsilon$ is acyclic. By a slight generalization of \cref{lemma:homotopic-cubes}, we get an $E_1$-quasi-isomorphic cube after replacing $\C(D_{000})$ by $\C(D_{110})$. Furthermore, \cref{thm:moy-ii} gives us that $\C(D_{010}) \iso \C(D_{110})\spannedby{1} \directsum \C(D_{110})\spannedby{U_9}$. Therefore, the above cube is $E_1$-quasi-isomorphic to:

\begin{center}
\begin{tikzcd}[ampersand replacement=\&, row sep = huge, column sep = normal]
    \& \C(D_{100}) \ar[r] \ar[dr] \& \C(D_{110}) \ar[dr,"-"] \\
    \C(D_{110}) \ar[ur,"-"] \ar[r,"\alpha"] \ar[dr] \& \C(D_{110})\spannedby{1} \directsum \C(D_{110})\spannedby{U_9} \ar[ur, "\beta" near start] \ar[dr,"-" near start] \& \C(D_{101}) \ar[r] \& \C(D_{111}) \\
    \& \C(D_{001}) \ar[ur] \ar[r] \& \C(D_{011}) \ar[ur,"-"] \nospaceperiod
\end{tikzcd}
\end{center}

We compute the induced maps in the above cube to be $\alpha = \pmat{-U_3 \\ 1}$ and $\beta = \pmat {1 & U_2}$. By \cref{lemma:gaussian-elimination}, we can cancel the isomorphisms of direct summands in the above cube to obtain the $E_1$-quasi-isomorphic complex:

\begin{center}
\begin{tikzcd}[ampersand replacement=\&, row sep = huge, column sep = huge]
    \& \C(D_{100}) \ar[r] \ar[dr] \ar[ddr,"\gamma"] \& 0 \ar[dr] \\
    0 \ar[ur] \ar[r] \ar[dr] \& 0 \ar[ur] \ar[dr] \& \C(D_{101}) \ar[r] \& \C(D_{111}) \\
    \& \C(D_{001}) \ar[ur] \ar[r] \& \C(D_{011}) \ar[ur,"-"] \nospaceperiod
\end{tikzcd}
\end{center}

Removing the trivial complexes in the above cube, and noting that the map $\gamma$ induced by cancellation is given by multiplication by $1$, we get the complex: 

\begin{center}
\begin{tikzcd}[ampersand replacement=\&, row sep = huge, column sep = huge]
    \C(D_{100}) \ar[r] \ar[ddr] \& \C(D_{101}) \ar[dr] \\
    \& \& \C(D_{111}) \\
    \C(D_{001}) \ar[uur] \ar[r] \& \C(D_{011}) \ar[ur,"-"] \nospaceperiod
\end{tikzcd}
\end{center}

Now, recalling that we have a second diagram $D'$ to work with, we may go through the same steps to simplify $\C(D')$ to get the complex:

\begin{center}
\begin{tikzcd}[ampersand replacement=\&, row sep = huge, column sep = huge]
    \C(D'_{100}) \ar[r] \ar[ddr] \& \C(D'_{101}) \ar[dr,"-"] \\
    \& \& \C(D'_{111}) \\
    \C(D'_{001}) \ar[uur] \ar[r] \& \C(D'_{011}) \ar[ur] \nospaceperiod
\end{tikzcd}
\end{center}

We conclude by noting that the reduced complexes for $\C(D)$ and $\C(D')$ are isomorphic via the map that reflects the complexes about a horizontal axis, i.e.\ swaps the $100$- and $001$-resolutions, swaps the $101$- and $011$-resolutions, and fixes the $111$-resolution. This map is a chain map since all the edge maps are $\pm 1$, and therefore $\C(D) \homotopic_1 \C(D')$.

\end{proof}

\subsection{Stabilization}

\begin{figure}[ht]
    \centering
    \begin{tabularx}{0.8\textwidth}{YY}
        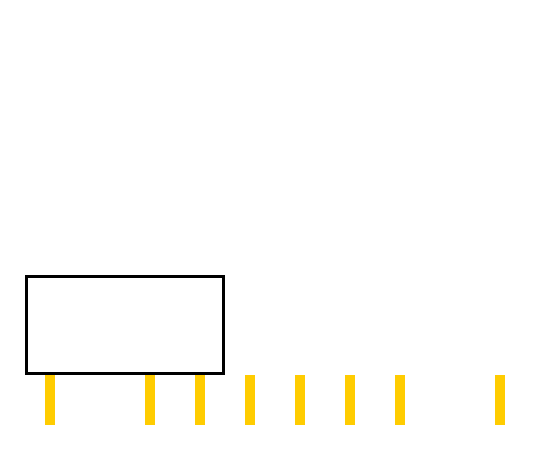 &
        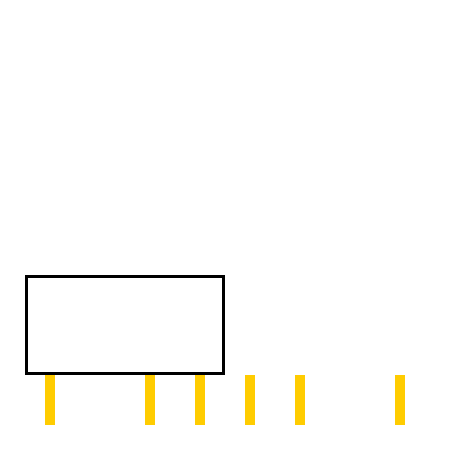 \\
        $D$: Stabilized/Original & $D'$: Original/Destabilized
    \end{tabularx}
    \caption{Diagrams related by a positive stabilization. Depending on context, we will either consider the diagram $D$ and its \emph{destabilization} $D'$, or we will consider the diagram $D'$ and its \emph{stabilization} $D$. }
    \label{fig:stabilization}
\end{figure}

Let $\beta \in B_{n-1}$ be an element of the braid group for $n \ge 2$, and let $\sigma_{n-1} \in B_n$ be the generator which introduces a positive crossing between strands $n-1$ and $n$. The \textit{positive stabilization} of $\beta$ is the braid $\sigma_{n-1} \beta \in B_n$, where here we are considering $\beta$ as an element of $B_n$ via the natural inclusion $B_{n-1} \inj B_n$. Analogously, the \textit{negative stabilization} of $\beta$ is the braid $\sigma_{n-1}^{-1} \beta$. For a braid $\beta = \sigma_{n-1}^{\pm 1} \beta' \in B_n$ in the image of one of these operations, we say that $\beta' \in B_{n-1}$ is the \textit{destabilization} of $\beta$. 

\begin{theorem}\label{thm:stabilization}
The $E_1$-homotopy type of the filtered complex $\C(\beta)$ is invariant under positive and negative (de)stabilization, i.e. 
$\C(\sigma_{n-1} \beta) \homotopic_1 \C(\sigma_{n-1}^{-1} \beta) \homotopic_1 \C(\beta)$
\end{theorem}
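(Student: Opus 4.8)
The plan is to resolve the stabilization crossing, use the MOY moves of \cref{sec:moy-moves} to peel off the newly-introduced strand, and then cancel the result by Gaussian elimination, in the same spirit as the proofs of \cref{thm:r-ii,thm:r-iii}. It suffices to prove $\C(\sigma_{n-1}^{\pm 1}\beta) \homotopic_1 \C(\beta)$ over $R(D')$ for each sign separately, where $D' \defeq I_{n-1}(\beta)$; the displayed zig-zag then follows by composition. The two signs are mirror-analogous, so I would treat the positive stabilization in detail, viewing $\beta \in B_{n-1}$ as an element of $B_n$ via the standard inclusion and writing $D \defeq I_n(\sigma_{n-1}\beta)$.

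First I would resolve the crossing $c = \sigma_{n-1}$, using the mapping-cone description of $\C(D)$ to write $\C(D) = \cone_1\!\left(\C(D_0) \xrightarrow{\phi_+} \C(D_1)\right)$. Here $D_1$ is the oriented-smoothing resolution: after deleting the resulting bivalent vertices, $D_1$ is $I_n$ applied to $\beta$ (trivially extended to $B_n$), so that $\sm(D_1)$ is isotopic to $\sm(D') \sqcup U$; and $D_0$ is the singular-vertex resolution, in which $c$ becomes a \emph{free} singular vertex between strands $n-1$ and $n$, with $\sm(D_0)$ isotopic to $\sm(D')$.

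The heart of the argument is a ``destabilization lemma'' at the level of the MOY graphs. Since $\beta$ does not involve strand $n$, that strand together with the layers of singular vertices of $I_n$ which are absent from $I_{n-1}$ (and the corresponding extra return strand) can be collapsed by an explicit finite sequence of MOY I, MOY II, and MOY III moves performed away from $\beta$. Carrying out this sequence and invoking \cref{thm:moy-i,thm:moy-ii,thm:moy-iii}, together with a mild generalization of \cref{lemma:homotopic-cubes} to discard the acyclic summands produced by the MOY III moves, should yield $R(D')$-linear $E_1$-quasi-isomorphisms $\C(D_1) \homotopic_1 \C(D')\spannedby{1} \directsum \C(D')\spannedby{U}$, the two summands coming from the final MOY II decomposition and reflecting the extra unknot in $\sm(D_1)$. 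The same sequence of moves applied to $D_0$ absorbs the new free vertex into the diamond of $I_n$ and gives $\C(D_0) \homotopic_1 \C(D')$.

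Finally I would track the connecting map: under these identifications $\phi_+$ should become inclusion onto the $\spannedby{1}$-summand (multiplication by $1$, exactly as in the MOY I and MOY II computations), so \cref{lemma:gaussian-elimination} cancels it and the cone collapses to the $\spannedby{U}$-summand, giving $\C(D) \homotopic_1 \C(D')$. For the negative stabilization the two resolutions switch roles and the edge map becomes $\phi_- = U_b - U_c$, but the same reduction followed by Gaussian elimination again gives $\C(\sigma_{n-1}^{-1}\beta) \homotopic_1 \C(\beta)$. I expect the main obstacle to be the destabilization lemma: writing down the precise sequence of MOY moves that collapses the extra part of $I_n$, reducing it to $I_{n-1}$ (and verifying it is compatible with the fixed/free vertex labeling, so that \cref{thm:vertex-relabeling} is not needed here), and then checking that the stabilization edge map transports through that sequence to exactly the claimed inclusion, so that the Gaussian elimination produces $\C(D')$ itself rather than some grading shift or twist of it.
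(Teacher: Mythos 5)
Your overall strategy---resolve the stabilization crossing, identify the two resolutions via MOY moves, and cancel a unit entry with \cref{lemma:gaussian-elimination}---is the same strategy the paper uses, and your intermediate claims $\C(D_0)\homotopic_1\C(D')$ and $\C(D_1)\homotopic_1\C(D')\spannedby{1}\directsum\C(D')\spannedby{U}$ are true. The gap is exactly the step you defer: because you reduce \emph{each resolution all the way down to} $I_{n-1}(\beta)$ before cancelling, you must transport the edge map $\phi_\pm$ through the entire destabilization sequence, and that sequence necessarily contains MOY I moves. MOY I equivalences are only filtered quasi-isomorphisms, not direct-sum decompositions with acyclic complement, so the vertex-replacement tool used elsewhere in the paper (\cref{lemma:homotopic-cubes} and its slight generalization) does not apply to them; you would instead have to choose explicit maps realizing every step in compatible directions, check that each commutes with $\phi_\pm$ (plausible by linearity, since the edge maps are multiplication by ring elements, but it must be verified move by move), and then compute the composite induced matrix, a computation whose length grows with $n$. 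Without this, the assertion that $\phi_+$ ``becomes inclusion onto the $\spannedby{1}$-summand'' is a guess; in the paper's intermediate computation the unit entry in fact appears against the \emph{other} summand, as the map $\spmat{U_1+U_2-U_5 \\ -1}$, which still cancels but shows the precise form cannot be assumed.

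The paper is organized specifically to avoid this bookkeeping. It interposes the diagram $I'_n(\beta)$ (the closure built from $I_n$ with the first-layer vertex between strands $n$ and $n+1$ deleted): the singular resolution of the stabilization crossing is exactly \emph{one} MOY III move from $I'_n(\beta)$ and the smoothed resolution exactly \emph{one} MOY II move from it, so the induced map on the two-term cone is a small explicit matrix ($\spmat{U_1+U_2-U_5 \\ -1}$ in the positive case, $\spmat{1 & U_4}$ in the negative case), the unit is cancelled immediately, and one gets $\C(\sigma_{n-1}^{\pm1}\beta)\homotopic_1\C(I'_n(\beta))$ with essentially no edge-map tracking. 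The long reduction you call the destabilization lemma---$n-2$ MOY III moves followed by two MOY I moves taking $I'_n(\beta)$ to $I_{n-1}(\beta)$---is then performed once, \emph{after} the cancellation, as \cref{prop:i-n-prime}, where \cref{thm:moy-i} and \cref{thm:moy-iii} are applied to entire filtered complexes and no compatibility with the stabilization edge map is required. If you restructure your argument along these lines the gap closes; as written, the hardest part of the proof is left unproved.
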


We note that $\beta$, $\sigma_{n-1} \beta$, and $\sigma_{n-1}^{-1} \beta$ all have isotopic braid closures. Before we prove stabilization invariance, we will need to relate diagrams containing the open braid diagrams $I_n$ and $I_{n-1}$, as the (de)stabilization operations alter the number of strands of our partially singular braid diagrams. Therefore, we first note that we can see $I_{n-1}$ as a sub-diagram of $I_n$ by ignoring the rightmost vertices in every row, as in \cref{fig:i-n-recursive-def}.

\begin{figure}[ht]
    \centering
    \begin{tikzcd}
\begingroup%
  \makeatletter%
  \providecommand\color[2][]{%
    \errmessage{(Inkscape) Color is used for the text in Inkscape, but the package 'color.sty' is not loaded}%
    \renewcommand\color[2][]{}%
  }%
  \providecommand\transparent[1]{%
    \errmessage{(Inkscape) Transparency is used (non-zero) for the text in Inkscape, but the package 'transparent.sty' is not loaded}%
    \renewcommand\transparent[1]{}%
  }%
  \providecommand\rotatebox[2]{#2}%
  \newcommand*\fsize{\dimexpr\f@size pt\relax}%
  \newcommand*\lineheight[1]{\fontsize{\fsize}{#1\fsize}\selectfont}%
  \ifx\svgwidth\undefined%
    \setlength{\unitlength}{57.60000086bp}%
    \ifx\svgscale\undefined%
      \relax%
    \else%
      \setlength{\unitlength}{\unitlength * \real{\svgscale}}%
    \fi%
  \else%
    \setlength{\unitlength}{\svgwidth}%
  \fi%
  \global\let\svgwidth\undefined%
  \global\let\svgscale\undefined%
  \makeatother%
  \begin{picture}(1,2.75000002)%
    \lineheight{1}%
    \setlength\tabcolsep{0pt}%
    \put(0,0){\includegraphics[width=\unitlength,page=1]{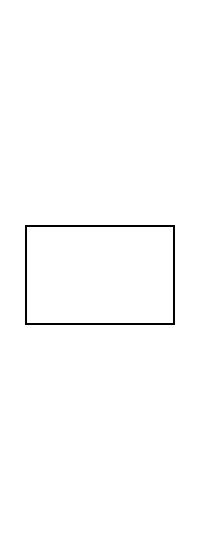}}%
    \put(0.42698841,1.33485189){\color[rgb]{0,0,0}\makebox(0,0)[lt]{\lineheight{1.25}\smash{\begin{tabular}[t]{l}$I_{n}$\end{tabular}}}}%
    \put(0.36448839,0.89630046){\color[rgb]{0,0,0}\makebox(0,0)[lt]{\lineheight{1.25}\smash{\begin{tabular}[t]{l}$\dots$\end{tabular}}}}%
    \put(0.36448839,1.77130148){\color[rgb]{0,0,0}\makebox(0,0)[lt]{\lineheight{1.25}\smash{\begin{tabular}[t]{l}$\cdots$\end{tabular}}}}%
    \put(0.23948838,0.77302998){\color[rgb]{0,0,0}\makebox(0,0)[lt]{\lineheight{1.25}\smash{\begin{tabular}[t]{l}$\underbrace{\hspace{0.4in}}_{2n}$\end{tabular}}}}%
    \put(0.23948835,2.02303037){\color[rgb]{0,0,0}\makebox(0,0)[lt]{\lineheight{1.25}\smash{\begin{tabular}[t]{l}$\overbrace{\hspace{0.4in}}^{2n}$\end{tabular}}}}%
    \put(0,0){\includegraphics[width=\unitlength,page=2]{i-n-recursive-def-prime.pdf}}%
  \end{picture}%
\endgroup%
 \ar[r,equal,start anchor=real east, end anchor=real west] & 
        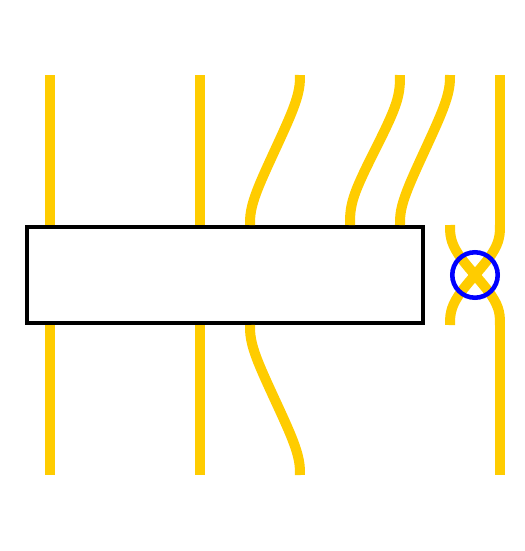
    \end{tikzcd}
    \caption{A recursive definition of $I_n$.}
    \label{fig:i-n-recursive-def}
\end{figure}

Consider $I_n(\sigma_{n-1}\beta)$, as shown in \cref{fig:stabilization-labels}. Let $R$ be the polynomial ring over all edges not labeled in \cref{fig:stabilization-labels}, and label the rest of the edges accordingly, so that $R(I_n(\sigma_{n-1}\beta)) = R[U_1,U_2,U_3,U_4,U_5]$. 

\begin{figure}[ht]
    \centering
    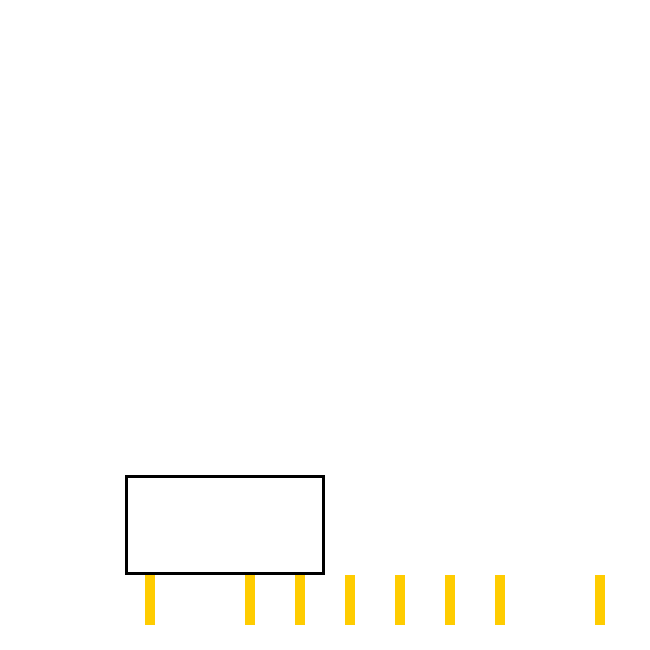
    \caption{Relevant edge labels near $\sigma_{n-1}$. Note that the top vertex is fixed only when $n=2$, and is otherwise free for $n \ge 3$.}
    \label{fig:stabilization-labels}
\end{figure}

Let $\phi = \phi_+ = 1$ be the edge map corresponding to the positive crossing of $\sigma_{n-1}$. We write the one-dimensional cube of resolutions corresponding to resolving the crossing:

\begin{center}
\begin{tikzcd}[ampersand replacement=\&, row sep = huge, column sep = huge]
\C(I_{n}(\sigma_{n-1}\beta)_0) \ar[r,"\phi"] \& \C(I_{n}(\sigma_{n-1}\beta)_1) \nospaceperiod
\end{tikzcd}
\end{center}

The diagrams corresponding to these resolutions are illustrated in \cref{fig:stabilization-cone}.

\begin{figure}[ht]
    \centering
    \begin{tikzcd} 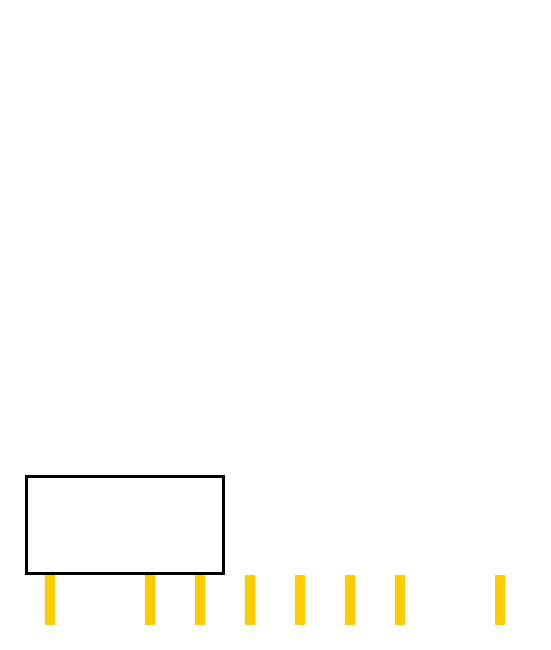 \ar[r, start anchor=real east, end anchor=real west, "\phi"] & 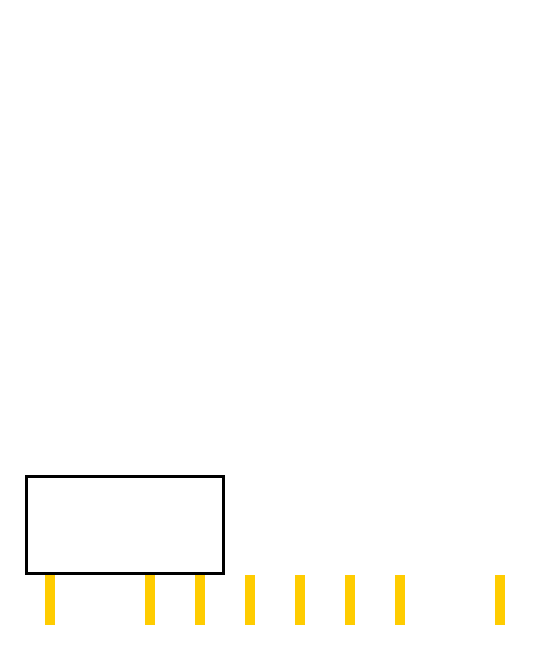
    \end{tikzcd}
    \caption{The mapping cone decomposition induced by $\sigma_{n-1}$.}
    \label{fig:stabilization-cone}
\end{figure}

Our goal now is to use MOY moves to modify both resolutions so that they can be represented using a common diagram, tracking the effect on the complexes. By an abuse of notation, we will denote this common diagram $I'_{n}(\beta)$, which is gotten analogously to $I_n(\beta)$: we place a straight strand to the right of $\beta$, place $n$ straight strands to the right of that, top this diagram with $I'_{n}$, and take the braid closure. It remains to define $I'_n$. We let $I'_n$ be $I_n$, without the singular vertex between strands $n$ and $n+1$ in the first layer. As for $I_{n}$, we can also define $I'_{n}$ by building on $I_{n-1}$, as in \cref{fig:i-n-prime-recursive-def}. 

\begin{figure}[ht]
    \centering
    \begin{tikzcd}
\begingroup%
  \makeatletter%
  \providecommand\color[2][]{%
    \errmessage{(Inkscape) Color is used for the text in Inkscape, but the package 'color.sty' is not loaded}%
    \renewcommand\color[2][]{}%
  }%
  \providecommand\transparent[1]{%
    \errmessage{(Inkscape) Transparency is used (non-zero) for the text in Inkscape, but the package 'transparent.sty' is not loaded}%
    \renewcommand\transparent[1]{}%
  }%
  \providecommand\rotatebox[2]{#2}%
  \newcommand*\fsize{\dimexpr\f@size pt\relax}%
  \newcommand*\lineheight[1]{\fontsize{\fsize}{#1\fsize}\selectfont}%
  \ifx\svgwidth\undefined%
    \setlength{\unitlength}{57.60000086bp}%
    \ifx\svgscale\undefined%
      \relax%
    \else%
      \setlength{\unitlength}{\unitlength * \real{\svgscale}}%
    \fi%
  \else%
    \setlength{\unitlength}{\svgwidth}%
  \fi%
  \global\let\svgwidth\undefined%
  \global\let\svgscale\undefined%
  \makeatother%
  \begin{picture}(1,2.75000002)%
    \lineheight{1}%
    \setlength\tabcolsep{0pt}%
    \put(0,0){\includegraphics[width=\unitlength,page=1]{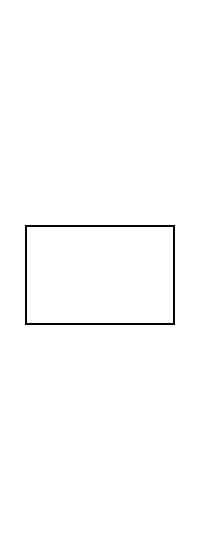}}%
    \put(0.42698841,1.33485189){\color[rgb]{0,0,0}\makebox(0,0)[lt]{\lineheight{1.25}\smash{\begin{tabular}[t]{l}$I'_{n}$\end{tabular}}}}%
    \put(0.36448839,0.89630046){\color[rgb]{0,0,0}\makebox(0,0)[lt]{\lineheight{1.25}\smash{\begin{tabular}[t]{l}$\dots$\end{tabular}}}}%
    \put(0.36448839,1.77130148){\color[rgb]{0,0,0}\makebox(0,0)[lt]{\lineheight{1.25}\smash{\begin{tabular}[t]{l}$\cdots$\end{tabular}}}}%
    \put(0.23948838,0.77302998){\color[rgb]{0,0,0}\makebox(0,0)[lt]{\lineheight{1.25}\smash{\begin{tabular}[t]{l}$\underbrace{\hspace{0.4in}}_{2n}$\end{tabular}}}}%
    \put(0.23948835,2.02303037){\color[rgb]{0,0,0}\makebox(0,0)[lt]{\lineheight{1.25}\smash{\begin{tabular}[t]{l}$\overbrace{\hspace{0.4in}}^{2n}$\end{tabular}}}}%
    \put(0,0){\includegraphics[width=\unitlength,page=2]{i-n-prime-recursive-def-prime.pdf}}%
  \end{picture}%
\endgroup%
 \ar[r,equal,start anchor=real east, end anchor=real west] & 
        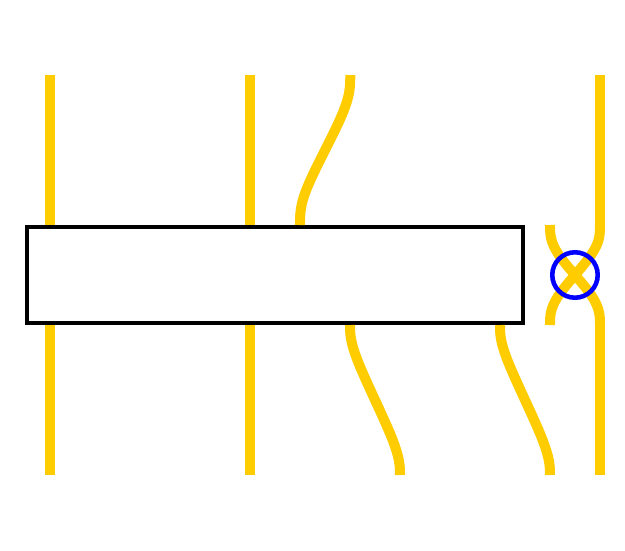
    \end{tikzcd}
    \caption{Building $I'_n$ from $I_{n-1}$.}
    \label{fig:i-n-prime-recursive-def}
\end{figure}

With this definition in mind, we now see that $I_{n}(\sigma_{n-1}\beta)_0$ is one MOY III move away from $I'_{n}(\beta)$, and $I_{n}(\sigma_{n-1}\beta)_1$ is one MOY II move away from $I'_{n}(\beta)$. On the one-dimensional cube of resolutions, then, we get 
\begin{center}
\begin{tikzcd}[ampersand replacement=\&, row sep = huge, column sep = huge]
\C(I'_{n}(\beta))\spannedby{U_3 - U_5} \directsum \Upsilon \ar[r,"\phi"] \& \C(I'_{n}(\beta))\spannedby{1} \directsum \C(I'_{n}(\beta))\spannedby{U_4} \nospaceperiod
\end{tikzcd}
\end{center}
Since $\Upsilon$ is acyclic, we can ignore it by \cref{lemma:homotopic-cones}. We compute the map induced by $\phi$ on the summands as:
\begin{center}
\begin{tikzcd}[ampersand replacement=\&, row sep = huge, column sep = huge]
\C(I'_{n}(\beta))\spannedby{U_3 - U_5} \ar[r,"\pmat{U_1 + U_2 - U_5 \\ -1}"] \& \C(I'_{n}(\beta))\spannedby{1} \directsum \C(I'_{n}(\beta))\spannedby{U_4} \nospaceperiod
\end{tikzcd}
\end{center}
Since the $-1$ entry represents an isomorphism of $\C(I'_n(\beta))$ summands, we can cancel it by \cref{lemma:gaussian-elimination}. This proves the following lemma. 

\begin{lemma}
The complexes $\C(\sigma_{n-1}\beta)$ and $\C(I'_{n}(\beta))$ have the same $E_1$-homotopy type. 
\end{lemma}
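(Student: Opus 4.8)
The plan is to resolve the positive crossing $\sigma_{n-1}$ in $I_n(\sigma_{n-1}\beta)$ and then use the MOY moves of \cref{sec:moy-moves} to rewrite the resulting one-dimensional mapping cone as $\C(I'_n(\beta))$. Writing $\phi = \phi_+ = 1$ for the edge map of $\sigma_{n-1}$, the (iterated) mapping cone description of $\C(-)$ gives $\C(\sigma_{n-1}\beta) = \cone_1\!\bigl(\phi\colon \C(I_n(\sigma_{n-1}\beta)_0) \to \C(I_n(\sigma_{n-1}\beta)_1)\bigr)$, so it is enough to identify the $E_1$-homotopy types of the two resolutions together with the connecting map.

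First I would recognize the two resolution diagrams in terms of $I'_n(\beta)$. Using the recursive descriptions of $I_n$ and $I'_n$ (\cref{fig:i-n-recursive-def} and \cref{fig:i-n-prime-recursive-def}), one checks that the $0$-resolution $I_n(\sigma_{n-1}\beta)_0$, where $\sigma_{n-1}$ is singularized, differs from $I'_n(\beta)$ by a single MOY III move, while the $1$-resolution $I_n(\sigma_{n-1}\beta)_1$, the oriented smoothing, differs from $I'_n(\beta)$ by a single MOY II move. Feeding these into \cref{thm:moy-iii} and \cref{thm:moy-ii} (with the edge labels of \cref{fig:stabilization-labels}) gives $\C(I_n(\sigma_{n-1}\beta)_0) \iso \C(I'_n(\beta))\spannedby{U_3 - U_5} \directsum \Upsilon$ with $\Upsilon$ acyclic, and $\C(I_n(\sigma_{n-1}\beta)_1) \iso \C(I'_n(\beta))\spannedby{1} \directsum \C(I'_n(\beta))\spannedby{U_4}$.

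Next I would assemble and simplify the cone. Since $\Upsilon$ is acyclic, \cref{lemma:homotopic-cones} removes it from the source of $\phi$ without changing the $E_1$-homotopy type, leaving $\C(\sigma_{n-1}\beta) \homotopic_1 \cone_1(\phi')$ for the induced map $\phi'\colon \C(I'_n(\beta))\spannedby{U_3-U_5} \to \C(I'_n(\beta))\spannedby{1}\directsum\C(I'_n(\beta))\spannedby{U_4}$. Because $\phi$ and all of the MOY II/III maps and their splittings act as multiplication by explicit polynomials in the edge variables, composing them computes $\phi' = \pmat{U_1 + U_2 - U_5 \\ -1}$. The $-1$ entry is an isomorphism between the summand $\C(I'_n(\beta))\spannedby{U_3 - U_5}$ and the summand $\C(I'_n(\beta))\spannedby{U_4}$, so \cref{lemma:gaussian-elimination} cancels it, leaving the complex $\C(I'_n(\beta))\spannedby{1} \iso \C(I'_n(\beta))$. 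This yields $\C(\sigma_{n-1}\beta) \homotopic_1 \C(I'_n(\beta))$.

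The main obstacle I expect is bookkeeping rather than a new idea: one must verify that the diagram identifications really are instances of the MOY II and MOY III moves from \cref{sec:moy-moves} (matching the local edges against the roles of $U_1,\dots,U_9$ in \cref{fig:moy-iii} and \cref{fig:moy-ii}), compose the explicit polynomial-multiplication maps in the correct order to extract $\phi'$, and confirm that the small case $n = 2$ — where the vertex structure near $\sigma_{n-1}$ in \cref{fig:stabilization-labels} changes — still permits the same MOY moves, invoking \cref{thm:vertex-relabeling} if a relabeling of fixed and free vertices is needed to make the moves apply.
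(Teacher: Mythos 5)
Your proposal is correct and follows essentially the same route as the paper: resolve $\sigma_{n-1}$ into a one-step cube, identify the $0$- and $1$-resolutions with $\C(I'_n(\beta))\spannedby{U_3-U_5}\directsum\Upsilon$ and $\C(I'_n(\beta))\spannedby{1}\directsum\C(I'_n(\beta))\spannedby{U_4}$ via the MOY III and MOY II decompositions, discard $\Upsilon$ by \cref{lemma:homotopic-cones}, compute the induced map $\pmat{U_1+U_2-U_5 \\ -1}$, and cancel the $-1$ by \cref{lemma:gaussian-elimination}. Your extra remark about the $n=2$ case (where the top vertex in \cref{fig:stabilization-labels} is fixed) is reasonable diligence but not part of the paper's argument.
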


Therefore, to prove conjugation invariance, it remains to prove the following proposition. 

\begin{proposition}\label{prop:i-n-prime}
The complexes $\C(I'_{n}(\beta))$ and $\C(\beta)$ have the same $E_1$-homotopy type. 
\end{proposition}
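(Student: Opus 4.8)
The plan is to reduce $\C(I'_{n}(\beta))$ to $\C(I_{n-1}(\beta)) = \C(\beta)$ by a sequence of MOY moves, none of which touch the $\beta$-part of the diagram, so that every one of them extends over the cube of resolutions of $c(\beta)$ exactly as in \cref{sec:moy-moves}, together with applications of \cref{thm:vertex-relabeling} to keep the fixed/free labelings compatible with the hypotheses of those moves.

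First I would pin down the precise combinatorial relationship between $I'_{n}$ and $I_{n-1}$. By construction (see \cref{fig:i-n-recursive-def} and \cref{fig:i-n-prime-recursive-def}), $I'_{n}(\beta)$ contains $I_{n-1}(\beta)$ as a subdiagram, and the complement of this subdiagram is a ``staircase'' of singular vertices together with two extra strands and the arcs joining them. I would describe this staircase layer by layer and record which of its vertices are fixed and which are free (the fixed vertices of $I'_{n}$ being those in layers $n$ and $n+1$ of the original $I_{n}$-numbering). The role of deleting the layer-$1$ vertex of $I_{n}$ is precisely that the resulting staircase becomes disassemblable by purely local moves; with that vertex present one is blocked.

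Then I would disassemble the staircase one vertex at a time. At each stage the outermost part of the staircase presents a local picture that is either a loop meeting a fixed vertex, to which we apply the MOY I move $\muone$ of \cref{thm:moy-i}, or a triangle made of one fixed and two free vertices, to which we apply the MOY III move $\muthree$ of \cref{thm:moy-iii}; whenever the local fixed/free pattern does not match the hypotheses of the move, I first invoke \cref{thm:vertex-relabeling} to correct it, which is legal because every diagram occurring along the way is still in $\dr$ (each embeds in one of the $S_{2n}$-type diagrams of \cite{nate} already known to lie in $\dr$, exactly as in the proof of \cref{prop:psbd-existence}). Each $\muone$ and each $\muthree$ is a filtered quasi-isomorphism (in fact each exhibits the simplified complex as a direct summand), bivalent vertices introduced in the process are removed without altering the complex, and $\muone,\muthree$ are $R(\,\cdot\,)$-linear hence commute with the edge maps $d_1$ over $c(\beta)$. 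When the staircase has been fully removed we are left with $I_{n-1}(\beta)$ carrying a possibly nonstandard fixed/free labeling; a final application of \cref{thm:vertex-relabeling} identifies its complex with $\C(\beta)$. Composing, we obtain $\C(I'_{n}(\beta)) \homotopic \C(\beta)$, which in particular gives the asserted $\C(I'_{n}(\beta)) \homotopic_1 \C(\beta)$.

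The main obstacle is the bookkeeping in the disassembly step: one must verify that at every stage there really is an available move (an MOY I loop or an MOY III triangle genuinely appears), that the needed fixed/free configuration can always be arranged by a legal relabeling, and that no intermediate diagram leaves $\dr$. The cleanest way to organize this is an induction on $n$, where the inductive step removes only the top row of the staircase and thereby reduces $I'_{n}(\beta)$ to a diagram that, after the relabelings above, is $I'_{n-1}(\beta)$ (equivalently, one row closer to $I_{n-1}(\beta)$), so that only a single MOY III move (and at most one MOY I move) must be analyzed explicitly, with the base case $n=2$ checked directly.
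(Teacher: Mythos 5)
Your proposal is correct and follows essentially the same route as the paper: after redrawing $I'_n(\beta)$ as a braid closure, the extra ``staircase'' is removed by $n-2$ MOY III moves followed by two MOY I moves on the remaining fixed curls, each applied componentwise over the cube of resolutions and each an $E_1$-equivalence by \cref{thm:moy-iii} and \cref{thm:moy-i}, landing exactly on $I_{n-1}(\beta)$. The paper's bookkeeping needs no appeal to \cref{thm:vertex-relabeling} (the fixed/free patterns match the MOY hypotheses as is), and your inductive target ``$I'_{n-1}(\beta)$'' is ill-typed since $\beta\in B_{n-1}$ is fixed -- the correct reading is your parenthetical one, an induction on the number of remaining staircase rows, which is what the paper's explicit left-to-right sequence of MOY III moves implements.
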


\begin{proof}
To begin, we note that $I'_{n}(\beta)$ and $I_{n-1}(\beta)$ are really braid closures, so for ease of understanding the upcoming MOY moves, we will replace our usual depiction of $I'_{n}(\beta)$ with a shifted version, as in \cref{fig:shifting-vertices-in-i-n-prime}. 

\begin{figure}[ht]
    \centering
    \begin{tabularx}{\textwidth}{YY}
        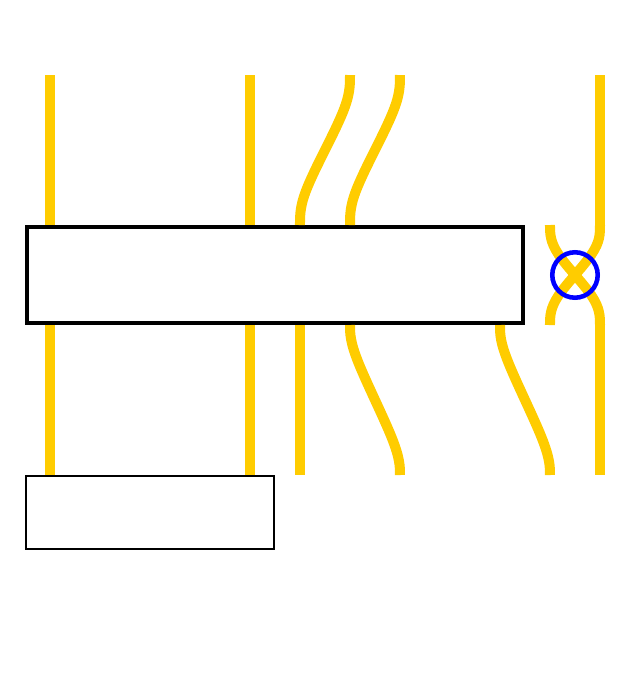 &
        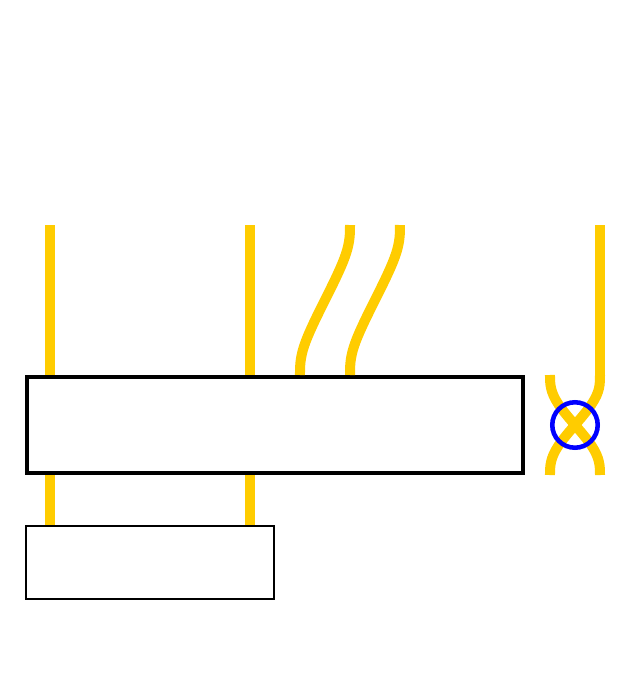
    \end{tabularx}
    \caption{Shifting vertices in $I'_n(\beta)$.}
    \label{fig:shifting-vertices-in-i-n-prime}
\end{figure}

In this shifted version, we identify the local picture on the left in \cref{fig:stabilization-lemma}, consisting of a pair of intersecting strands and $n-2$ other strands which intersect both. We can apply $n-2$ MOY III moves to simplify this part of the diagram to the local picture on the right in \cref{fig:stabilization-lemma}, consisting of $n-2$ straight strands and one pair of intersecting strands. By \cref{thm:moy-iii}, each of these preserves the $E_1$-homotopy type of the complex. The global picture at this stage can be seen on the left in \cref{fig:stabilization-last-step}. To arrive at the diagram for $I_{n-1}(\beta)$, we apply two MOY I moves to the two remaining fixed vertices outside of $I_{n-1}$. By \cref{thm:moy-i}, each of these preserves the $E_1$-homotopy type of the complex. This leads to the diagram on the right in \cref{fig:stabilization-last-step}, which is exactly the diagram for $I_{n-1}(\beta)$. 

\end{proof}

\begin{figure}[ht]
    \centering
    \begin{tabularx}{0.8\textwidth}{YY}
\begingroup%
  \makeatletter%
  \providecommand\color[2][]{%
    \errmessage{(Inkscape) Color is used for the text in Inkscape, but the package 'color.sty' is not loaded}%
    \renewcommand\color[2][]{}%
  }%
  \providecommand\transparent[1]{%
    \errmessage{(Inkscape) Transparency is used (non-zero) for the text in Inkscape, but the package 'transparent.sty' is not loaded}%
    \renewcommand\transparent[1]{}%
  }%
  \providecommand\rotatebox[2]{#2}%
  \newcommand*\fsize{\dimexpr\f@size pt\relax}%
  \newcommand*\lineheight[1]{\fontsize{\fsize}{#1\fsize}\selectfont}%
  \ifx\svgwidth\undefined%
    \setlength{\unitlength}{86.40000343bp}%
    \ifx\svgscale\undefined%
      \relax%
    \else%
      \setlength{\unitlength}{\unitlength * \real{\svgscale}}%
    \fi%
  \else%
    \setlength{\unitlength}{\svgwidth}%
  \fi%
  \global\let\svgwidth\undefined%
  \global\let\svgscale\undefined%
  \makeatother%
  \begin{picture}(1,1.4999999)%
    \lineheight{1}%
    \setlength\tabcolsep{0pt}%
    \put(0,0){\includegraphics[width=\unitlength,page=1]{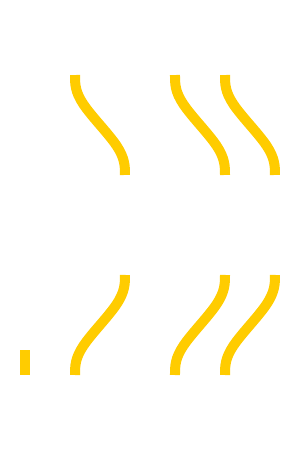}}%
    \put(0.32632557,1.18086729){\color[rgb]{0,0,0}\makebox(0,0)[lt]{\lineheight{1.25}\smash{\begin{tabular}[t]{l}$\cdots$\end{tabular}}}}%
    \put(0.32632557,0.26420038){\color[rgb]{0,0,0}\makebox(0,0)[lt]{\lineheight{1.25}\smash{\begin{tabular}[t]{l}$\dots$\end{tabular}}}}%
    \put(0,0){\includegraphics[width=\unitlength,page=2]{stabilization-lemma.pdf}}%
    \put(0.24299223,1.34868692){\color[rgb]{0,0,0}\makebox(0,0)[lt]{\lineheight{1.25}\smash{\begin{tabular}[t]{l}$\overbrace{\hspace{0.4in}}^{n-3}$\end{tabular}}}}%
    \put(0.24299231,0.18202044){\color[rgb]{0,0,0}\makebox(0,0)[lt]{\lineheight{1.25}\smash{\begin{tabular}[t]{l}$\underbrace{\hspace{0.4in}}_{n-3}$\end{tabular}}}}%
  \end{picture}%
\endgroup%
 &
\begingroup%
  \makeatletter%
  \providecommand\color[2][]{%
    \errmessage{(Inkscape) Color is used for the text in Inkscape, but the package 'color.sty' is not loaded}%
    \renewcommand\color[2][]{}%
  }%
  \providecommand\transparent[1]{%
    \errmessage{(Inkscape) Transparency is used (non-zero) for the text in Inkscape, but the package 'transparent.sty' is not loaded}%
    \renewcommand\transparent[1]{}%
  }%
  \providecommand\rotatebox[2]{#2}%
  \newcommand*\fsize{\dimexpr\f@size pt\relax}%
  \newcommand*\lineheight[1]{\fontsize{\fsize}{#1\fsize}\selectfont}%
  \ifx\svgwidth\undefined%
    \setlength{\unitlength}{86.40000343bp}%
    \ifx\svgscale\undefined%
      \relax%
    \else%
      \setlength{\unitlength}{\unitlength * \real{\svgscale}}%
    \fi%
  \else%
    \setlength{\unitlength}{\svgwidth}%
  \fi%
  \global\let\svgwidth\undefined%
  \global\let\svgscale\undefined%
  \makeatother%
  \begin{picture}(1,1.4999999)%
    \lineheight{1}%
    \setlength\tabcolsep{0pt}%
    \put(0.32632557,1.18086729){\color[rgb]{0,0,0}\makebox(0,0)[lt]{\lineheight{1.25}\smash{\begin{tabular}[t]{l}$\cdots$\end{tabular}}}}%
    \put(0.32632557,0.26420038){\color[rgb]{0,0,0}\makebox(0,0)[lt]{\lineheight{1.25}\smash{\begin{tabular}[t]{l}$\dots$\end{tabular}}}}%
    \put(0.24299223,1.34868692){\color[rgb]{0,0,0}\makebox(0,0)[lt]{\lineheight{1.25}\smash{\begin{tabular}[t]{l}$\overbrace{\hspace{0.4in}}^{n-3}$\end{tabular}}}}%
    \put(0.24299231,0.18202044){\color[rgb]{0,0,0}\makebox(0,0)[lt]{\lineheight{1.25}\smash{\begin{tabular}[t]{l}$\underbrace{\hspace{0.4in}}_{n-3}$\end{tabular}}}}%
    \put(0,0){\includegraphics[width=\unitlength,page=1]{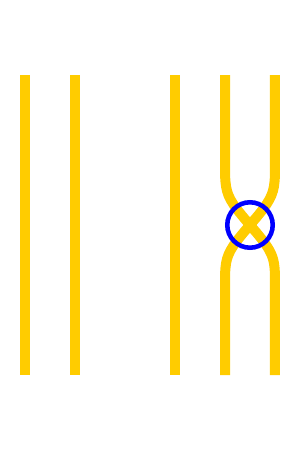}}%
  \end{picture}%
\endgroup%

    \end{tabularx}
    \caption{Local pictures of diagrams related by a sequence of $n-2$ MOY III moves.}
    \label{fig:stabilization-lemma}
\end{figure}

\begin{figure}[ht]
    \centering
    \begin{tabularx}{\textwidth}{YY}
        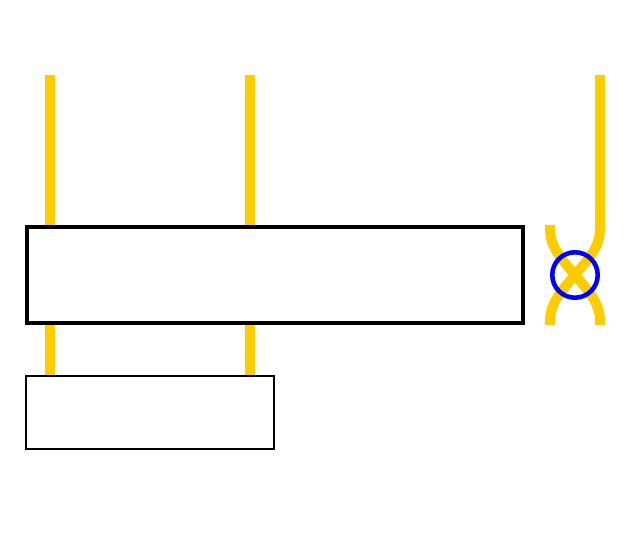 &
        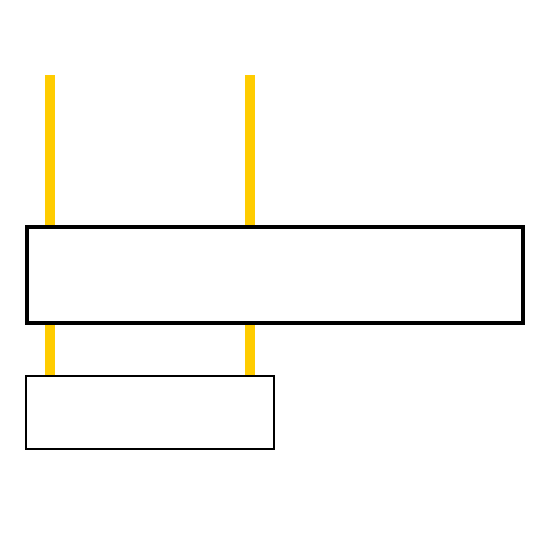
    \end{tabularx}
    \caption{The last step in simplifying $I'_n(\beta)$ in the proof of \cref{prop:i-n-prime}. }
    \label{fig:stabilization-last-step}
\end{figure}

\begin{proof}[Proof of \cref{thm:stabilization}]
We have shown that $\C(\sigma_{n-1} \beta) \homotopic \C(I'_n(\beta)) \homotopic \C(\beta)$. We can simplify $\C(\sigma_{n-1}^{-1} \beta)$ to $\C(I'_n(\beta))$ as well. Using the same edge labels and notation as before, we write the cube of resolutions for $\C(\beta)$ as
\begin{center}
\begin{tikzcd}[ampersand replacement=\&, row sep = huge, column sep = huge]
\C(I_n(\beta)_0) \ar[r,"\phi"] \& \C(I_n(\beta)_1)
\end{tikzcd}
\end{center}
where this time $\phi = \phi_- = U_3-U_5$. Applying an MOY II move to $I_n(\beta)_0$ and an MOY III move to $I_n(\beta)_1$ to write our complexes in terms of $\C(I'_n(\beta))$ gives us the complex
\begin{center}
\begin{tikzcd}[ampersand replacement=\&, row sep = huge, column sep = huge]
\C(I'_n(\beta))\spannedby{1} \directsum \C(I'_n(\beta))\spannedby{U_4} \ar[r,"\phi"] \& \C(I'_n(\beta))\spannedby{U_3 - U_5} \directsum \Upsilon \nospaceperiod
\end{tikzcd}
\end{center}
Again, excluding $\Upsilon$ and computing the map induced by $\phi$, we get:
\begin{center}
\begin{tikzcd}[ampersand replacement=\&, row sep = huge, column sep = huge]
\C(I'_n(\beta))\spannedby{1} \directsum \C(I'_n(\beta))\spannedby{U_4} \ar[r,"\pmat{1 & U_4}"] \& \C(I'_n(\beta))\spannedby{U_3 - U_5} \nospaceperiod
\end{tikzcd}
\end{center}
As before, we may cancel the $1$ in the above matrix to see that $\C(\sigma_{n-1}^{-1} \beta) \homotopic_1 \C(I'_n(\beta))$ as well. The rest of the proof follows from \cref{prop:i-n-prime}. Since we have covered both the positive and negative cases, this suffices to show invariance under stabilization.
\end{proof}

\subsection{Conjugation}

Conjugation invariance is the following statement:

\begin{theorem}\label{thm:conjugation}
For any $\alpha, \beta \in B_n$, we have that $\C(\alpha^{-1} \beta \alpha) \homotopic_1 \C(\beta)$.
\end{theorem}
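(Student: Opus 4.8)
The plan is to reduce \cref{thm:conjugation} to a cyclic invariance statement for a single braid generator, and then prove that statement geometrically using the MOY moves of \cref{sec:moy-moves} together with \cref{thm:vertex-relabeling}. First note that, up to $\homotopic_1$, the complex $\C(\beta)$ depends only on the braid group element $\beta \in B_n$ and not on the chosen word: two words for the same element differ by braid relations $\sigma_i \sigma_{i+1}\sigma_i = \sigma_{i+1}\sigma_i\sigma_{i+1}$, far commutations $\sigma_i\sigma_j = \sigma_j\sigma_i$ with $|i-j|\ge 2$, and cancellations $\sigma_i\sigma_i^{-1} = \sigma_i^{-1}\sigma_i = 1$; the first is a Reidemeister III move on $I_n(\beta)$ (\cref{thm:r-iii}), the second is a planar isotopy of the diagram (changing it only by subdivision), and the third is a Reidemeister II move (\cref{thm:r-ii}). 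Granting this, it suffices to prove the cyclic identity $\C(\sigma_j^{\pm 1}\gamma) \homotopic_1 \C(\gamma\sigma_j^{\pm 1})$ for every generator $\sigma_j$ and every $\gamma \in B_n$: writing $\alpha = \sigma_j^{\epsilon}\alpha'$ and inducting on word length, one gets $\C(\alpha^{-1}\beta\alpha) = \C(\alpha'^{-1}(\sigma_j^{-\epsilon}\beta\sigma_j^{\epsilon})\alpha') \homotopic_1 \C(\sigma_j^{-\epsilon}(\beta\sigma_j^{\epsilon})) \homotopic_1 \C((\beta\sigma_j^{\epsilon})\sigma_j^{-\epsilon}) \homotopic_1 \C(\beta)$, where the first equivalence is the inductive hypothesis applied to the shorter braid $\alpha'$, the middle one is the cyclic identity, and the last is word independence.

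For the cyclic identity, compare $I_n(\sigma_j^{\pm 1}\gamma)$ and $I_n(\gamma\sigma_j^{\pm 1})$, which differ only by moving the extra crossing from one end of the braid word $\gamma$ to the other. In $I_n(\sigma_j^{\pm 1}\gamma)$ the crossing sits on strands $j, j+1$ just above the closure arcs; sliding it along those two parallel arcs is a planar isotopy, changing the diagram only by subdivision, so it preserves $\C$ exactly and carries the crossing to the top of the block $I_n$. It then remains to push the crossing down through $I_n$. To do this, resolve it into its $0$- and $1$-resolutions, so that the relevant complex becomes the mapping cone of the edge map between the two resolved diagrams, and commute each resolution past the singular vertices of $I_n$ one layer at a time using MOY II and MOY III moves (\cref{thm:moy-ii}, \cref{thm:moy-iii}) and their mirror variations — a strand sliding past a singular vertex is precisely an MOY III move, and the merges it forces are absorbed by MOY II moves. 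Since each of these maps is $R(D')$-linear and commutes with $d_1$, they assemble into $E_1$-quasi-isomorphisms of the mapping cones, and so the whole procedure preserves the $E_1$-homotopy type. The resulting diagram agrees with $I_n(\gamma\sigma_j^{\pm 1})$ up to MOY I moves (\cref{thm:moy-i}) and, crucially, up to relabeling some singular vertices as fixed or free, since the MOY moves create fixed vertices in their own preferred pattern; a final appeal to \cref{thm:vertex-relabeling} then identifies the two complexes.

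The main obstacle is this middle step: choosing and ordering the sequence of MOY moves that transports a resolved crossing through the diamond-shaped array of singular vertices comprising $I_n$ — a more involved version of the sequence of $n-2$ MOY III moves used in the proof of \cref{prop:i-n-prime} — and then checking that the leftover fixed/free discrepancy is exactly of the form handled by \cref{thm:vertex-relabeling}. One also needs to verify that the nonlocal ideal is unchanged by the transport (any embedded disk producing a nonlocal relation near $I_n$ can be isotoped to yield an equivalent relation in the new diagram) and that every intermediate diagram remains in $\dr$, so that the MOY move theorems genuinely apply; this uses the structure of $I_n$, in particular that it contains a vertically mirrored copy of $S_{2n}$, just as in the proof of \cref{prop:psbd-existence}.
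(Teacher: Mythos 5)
Your global reduction is fine and in fact coincides with the paper's first step: invariance under word moves follows from \cref{thm:r-ii}, \cref{thm:r-iii}, and the observation that far commutation does not change the combinatorial diagram, and conjugation by a general $\alpha$ reduces by induction to conjugation by a single $\sigma_i^{\pm1}$ (your ``cyclic identity'' $\C(\sigma_j^{\pm1}\gamma)\homotopic_1\C(\gamma\sigma_j^{\pm1})$ is just a reformulation of that case). The problem is that the entire content of the theorem now sits in the step you yourself flag as ``the main obstacle,'' and the sketch you give for it does not go through with the tools available in the paper. You propose to resolve the extra crossing and ``commute each resolution past the singular vertices of $I_n$ one layer at a time using MOY II and MOY III moves,'' asserting that ``a strand sliding past a singular vertex is precisely an MOY III move.'' But the $0$-resolution of the crossing is itself a (free) singular vertex, and none of the moves established in \cref{sec:moy-moves} commutes a singular vertex past another singular vertex: \cref{thm:moy-iii} only slides a \emph{smooth} strand past a singular vertex, and only in the specific triangle configuration with the prescribed fixed/free labels, while \cref{thm:moy-ii} collapses a bigon between a free and a fixed vertex. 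So the transport of the singularized resolution through the diamond of $I_n$ is not a composite of the available moves, and no substitute relation (a square-switch type identity, say) is proved anywhere in the paper. Moreover, even where the MOY decompositions do apply, establishing an $E_1$-equivalence of the two mapping cones requires computing the maps induced by the edge map on the decomposed summands and cancelling via \cref{lemma:gaussian-elimination}, exactly as in the stabilization argument; your proposal never carries out any such computation, nor the promised verification of the fixed/free bookkeeping and membership in $\dr$ at the intermediate stages.

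For comparison, the paper sidesteps the transport entirely. Its key input is \cref{lemma:conjugation-lemma}: a cancelling positive/negative pair of crossings placed on either side of a full horizontal row of singular vertices can be removed up to $\homotopic_1$. This is proved concretely by expanding the two-crossing cube of resolutions, simplifying three of its corners with $n-1$ applications of \cref{thm:moy-iii} and an MOY II decomposition, and cancelling with \cref{lemma:gaussian-elimination}. Conjugation by $\sigma_i^{\pm1}$ then follows by a Reidemeister II move introducing a cancelling pair among the downward strands, two applications of the lemma (with \cref{thm:vertex-relabeling} invoked only once, in the $i=1$ case, to fix the labelling of a row), and no crossing ever has to pass through $I_n$. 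To salvage your route you would need either to prove a genuine ``singular vertex past singular vertex'' commutation result, or to replace the transport by an explicit cone computation in the spirit of the stabilization proof; as written, the argument has a gap precisely where the theorem's difficulty lies.
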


To begin, we prove a lemma relating complexes associated to diagrams that locally look like the pictures in \cref{fig:conjugation-lemma}.

\begin{figure}[ht]
    \centering
    \begin{tabularx}{\textwidth}{YY}
        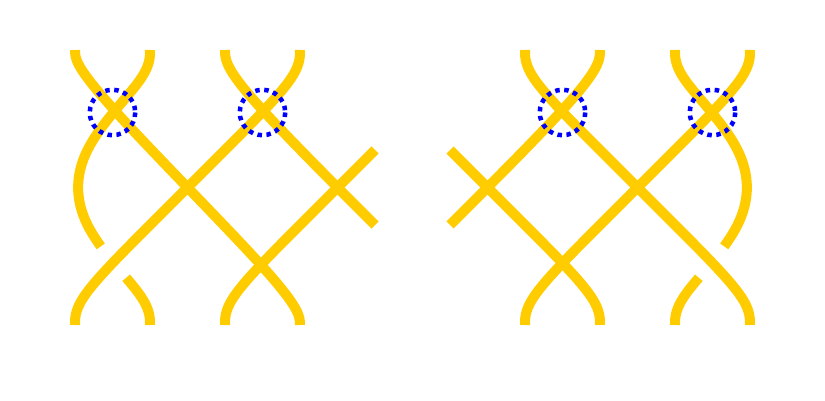 &
\begingroup%
  \makeatletter%
  \providecommand\color[2][]{%
    \errmessage{(Inkscape) Color is used for the text in Inkscape, but the package 'color.sty' is not loaded}%
    \renewcommand\color[2][]{}%
  }%
  \providecommand\transparent[1]{%
    \errmessage{(Inkscape) Transparency is used (non-zero) for the text in Inkscape, but the package 'transparent.sty' is not loaded}%
    \renewcommand\transparent[1]{}%
  }%
  \providecommand\rotatebox[2]{#2}%
  \newcommand*\fsize{\dimexpr\f@size pt\relax}%
  \newcommand*\lineheight[1]{\fontsize{\fsize}{#1\fsize}\selectfont}%
  \ifx\svgwidth\undefined%
    \setlength{\unitlength}{237.59999657bp}%
    \ifx\svgscale\undefined%
      \relax%
    \else%
      \setlength{\unitlength}{\unitlength * \real{\svgscale}}%
    \fi%
  \else%
    \setlength{\unitlength}{\svgwidth}%
  \fi%
  \global\let\svgwidth\undefined%
  \global\let\svgscale\undefined%
  \makeatother%
  \begin{picture}(1,0.4848485)%
    \lineheight{1}%
    \setlength\tabcolsep{0pt}%
    \put(0,0){\includegraphics[width=\unitlength,page=1]{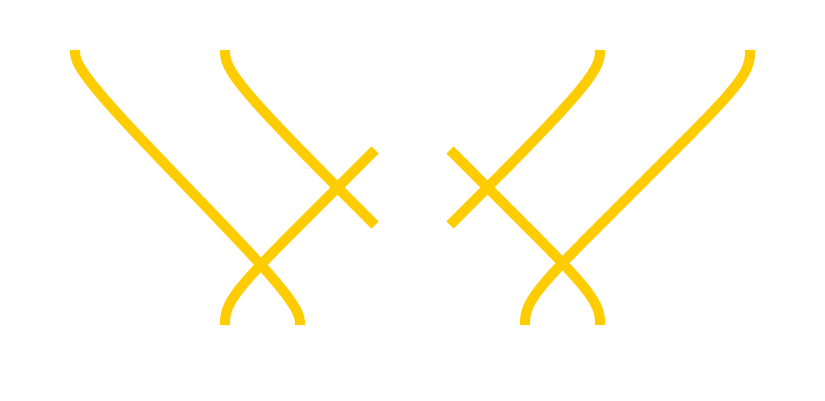}}%
    \put(0.46714871,0.24758818){\color[rgb]{0,0,0}\makebox(0,0)[lt]{\lineheight{1.25}\smash{\begin{tabular}[t]{l}$\cdots$\end{tabular}}}}%
    \put(0.46714871,0.11122452){\color[rgb]{0,0,0}\makebox(0,0)[lt]{\lineheight{1.25}\smash{\begin{tabular}[t]{l}$\cdots$\end{tabular}}}}%
    \put(0.46714871,0.38395186){\color[rgb]{0,0,0}\makebox(0,0)[lt]{\lineheight{1.25}\smash{\begin{tabular}[t]{l}$\cdots$\end{tabular}}}}%
    \put(0,0){\includegraphics[width=\unitlength,page=2]{conjugation-lemma-prime.pdf}}%
  \end{picture}%
\endgroup%
 \\
        $A$ & $A'$
    \end{tabularx}
    \caption{Local pictures of diagrams with equivalent $\C(-)$.}
    \label{fig:conjugation-lemma}
\end{figure}

\begin{lemma}\label{lemma:conjugation-lemma}
Let $A$ and $A'$ be partially singular braid diagrams that are identical outside of a specific region, where they look like the diagrams in \cref{fig:conjugation-lemma}, i.e.\ $A$ has two opposite crossings whereas $A'$ has oriented smoothings. Then $\C(A) \homotopic_1 \C(A')$. 
\end{lemma}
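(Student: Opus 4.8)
The plan is to imitate the structure of the proofs of \cref{thm:r-ii} and \cref{prop:i-n-prime}: resolve the two crossings of $A$, write out the resulting (square) cube of resolutions, use the MOY moves of \cref{sec:moy-moves} to rewrite each of its four corners in terms of $\C(A')$, and then apply Gaussian elimination to cancel the isomorphic summands that appear, leaving only $\C(A')$.

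First I would label the edges of $A$ as in \cref{fig:conjugation-lemma}, order the two crossings --- one positive, one negative --- and expand $\C(A)$ as a $2 \times 2$ cube with corners $\C(A_{00})$, $\C(A_{10})$, $\C(A_{01})$, $\C(A_{11})$. By \cref{fig:crossing-resolutions}, exactly one corner, say $\C(A_{10})$, is the totally oriented-smoothed resolution, and this corner is isomorphic to $\C(A')$ after removing bivalent vertices. The two edge maps of the cube are $\phi_+ = 1$ and $\phi_- = U_a - U_b$ for the appropriate edge variables, so after fixing a sign assignment the goal becomes: this cube is $E_1$-quasi-isomorphic to $\C(A_{10})$.

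The heart of the argument is to simplify the remaining three corners. In each of $\C(A_{00})$, $\C(A_{11})$ and $\C(A_{01})$ one obtains a singular configuration in which a strand threads through a bundle of the other strands of the region; applying one MOY III move per threading strand (\cref{thm:moy-iii}) pulls that bundle straight, at the cost of acyclic summands $\Upsilon$, after which an MOY II move (\cref{thm:moy-ii}), and if needed an MOY I move (\cref{thm:moy-i}), expresses the corner in terms of $\C(A')$. I expect --- exactly as in the Reidemeister II computation --- that after discarding the acyclic pieces via (a slight generalization of) \cref{lemma:homotopic-cubes}, the two off-diagonal corners become single copies of $\C(A')$ while the diagonal corner $\C(A_{01})$ becomes $\C(A')\spannedby{1} \directsum \C(A')\spannedby{U_c}$ for an appropriate edge variable $U_c$. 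Computing the edge maps induced on these decompositions, the relevant matrices should have entries that are isomorphisms of $\C(A')$-summands ($\pm 1$ entries) together with multiplication-by-$U$ entries, so that two rounds of Gaussian elimination (\cref{lemma:gaussian-elimination}) cancel everything except a single $\C(A')$, with the cancellation map checked to be multiplication by a unit. This gives $\C(A) \homotopic_1 \C(A')$; the configuration with the positive and negative crossings interchanged is handled by the same argument with $\phi_+$ and $\phi_-$ swapped.

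The main obstacle will be the bookkeeping in this last step: identifying precisely which resolution yields which MOY configuration, computing the matrices induced by $\phi_\pm$ on the MOY II and MOY III decompositions (in the spirit of the maps $\alpha$, $\beta$, $\gamma$ of \cref{thm:r-iii}), and verifying that the entries slated for cancellation are genuine isomorphisms so that the Gaussian eliminations are legitimate. Once these induced maps are pinned down, the cancellations and the final identification with $\C(A')$ are routine.
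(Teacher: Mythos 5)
Your plan is essentially the paper's proof: resolve the two crossings into a $2\times 2$ cube, note that the fully smoothed corner $\C(A_{10})$ is $\C(A')$, simplify the other three corners with one MOY III move per strand of the bundle plus one MOY II move, and then cancel unit entries of the induced edge maps by \cref{lemma:gaussian-elimination}, handling the mirrored configuration analogously. Two of your anticipated details differ from what actually happens, one harmlessly and one not. Harmlessly: after the MOY III moves the corners $A_{00}$, $A_{11}$, $A_{01}$ do not become copies of $\C(A')$; they become copies of the complex of an auxiliary diagram $A''_{00}$ (the configuration with the singular vertex pushed to the far end of the bundle), with $\C(A''_{01}) \homotopic \C(A''_{00})\spannedby{x} \directsum \C(A''_{00})\spannedby{a_n x}$ by MOY II. This does not matter for the argument, since the cancellation only requires a common complex at those three corners and unit entries in the induced maps, and the surviving corner is still $\C(A_{10}) \iso \C(A')$.

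The substantive point is your proposal to discard \emph{all} acyclic $\Upsilon$ summands up front via (a generalization of) \cref{lemma:homotopic-cubes}. That lemma only lets you drop acyclic summands at the initial and terminal corners of the square ($A_{00}$ and $A_{11}$ here): at the middle corner $A_{01}$, which has both incoming and outgoing differentials, the projection off an acyclic summand is not in general a chain map of the cube (the outgoing edge map need not vanish on $\Upsilon$), so this step as written is not justified. The paper instead retains the summand $\Upsilon = \Upsilon_1 \directsum \dots \directsum \Upsilon_{n-1}$ at $A_{01}$ through both Gaussian eliminations; after the unit entries are cancelled the remaining complex is $\C(A_{10})$ together with $\Upsilon$ carrying no differentials, and only then is $\Upsilon$ discarded, using that it is $E_1$-acyclic so the $E_2$-pages agree. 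With that correction (and no MOY I moves are actually needed), your outline reproduces the paper's argument.
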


\begin{proof}
Let $\phi_1 = \phi_+ = 1$ be the edge map corresponding to the left (positive) crossing, and let $\phi_2 = \phi_- = a_n - e_{n-1}$ be the edge map corresponding to the right (negative) crossing. We expand the cube of resolutions for $\C(A)$ as follows:

\begin{center}
\begin{tikzcd}[ampersand replacement=\&, row sep = huge, column sep = huge]
    \C(A_{00}) \ar[r,"\oldphi_1"] \ar[d,"\oldphi_2"] \& \C(A_{10}) \ar[d,"\oldphi_2"] \\
    \C(A_{01}) \ar[r,"\oldphi_1"] \& \C(A_{11}) \nospaceperiod
\end{tikzcd}
\end{center}

The diagrams for these four partial resolutions look like \cref{fig:conjugation-cube}.

\begin{figure}[ht]
    \centering
    \begin{tikzcd}[ampersand replacement=\&, row sep = huge, column sep = huge]
\begingroup%
  \makeatletter%
  \providecommand\color[2][]{%
    \errmessage{(Inkscape) Color is used for the text in Inkscape, but the package 'color.sty' is not loaded}%
    \renewcommand\color[2][]{}%
  }%
  \providecommand\transparent[1]{%
    \errmessage{(Inkscape) Transparency is used (non-zero) for the text in Inkscape, but the package 'transparent.sty' is not loaded}%
    \renewcommand\transparent[1]{}%
  }%
  \providecommand\rotatebox[2]{#2}%
  \newcommand*\fsize{\dimexpr\f@size pt\relax}%
  \newcommand*\lineheight[1]{\fontsize{\fsize}{#1\fsize}\selectfont}%
  \ifx\svgwidth\undefined%
    \setlength{\unitlength}{151.19999313bp}%
    \ifx\svgscale\undefined%
      \relax%
    \else%
      \setlength{\unitlength}{\unitlength * \real{\svgscale}}%
    \fi%
  \else%
    \setlength{\unitlength}{\svgwidth}%
  \fi%
  \global\let\svgwidth\undefined%
  \global\let\svgscale\undefined%
  \makeatother%
  \begin{picture}(1,0.4761905)%
    \lineheight{1}%
    \setlength\tabcolsep{0pt}%
    \put(0.44837687,0.22240049){\color[rgb]{0,0,0}\makebox(0,0)[lt]{\lineheight{1.25}\smash{\begin{tabular}[t]{l}$\cdots$\end{tabular}}}}%
    \put(0.44837687,0.0557333){\color[rgb]{0,0,0}\makebox(0,0)[lt]{\lineheight{1.25}\smash{\begin{tabular}[t]{l}$\cdots$\end{tabular}}}}%
    \put(0.44837687,0.3890672){\color[rgb]{0,0,0}\makebox(0,0)[lt]{\lineheight{1.25}\smash{\begin{tabular}[t]{l}$\cdots$\end{tabular}}}}%
    \put(0,0){\includegraphics[width=\unitlength,page=1]{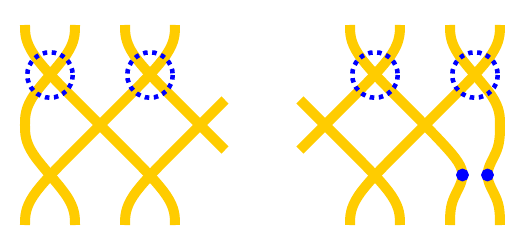}}%
  \end{picture}%
\endgroup%
 \ar[r,"\oldphi_1",start anchor=real east, end anchor=real west] \ar[d,"\oldphi_2"] \& 
\begingroup%
  \makeatletter%
  \providecommand\color[2][]{%
    \errmessage{(Inkscape) Color is used for the text in Inkscape, but the package 'color.sty' is not loaded}%
    \renewcommand\color[2][]{}%
  }%
  \providecommand\transparent[1]{%
    \errmessage{(Inkscape) Transparency is used (non-zero) for the text in Inkscape, but the package 'transparent.sty' is not loaded}%
    \renewcommand\transparent[1]{}%
  }%
  \providecommand\rotatebox[2]{#2}%
  \newcommand*\fsize{\dimexpr\f@size pt\relax}%
  \newcommand*\lineheight[1]{\fontsize{\fsize}{#1\fsize}\selectfont}%
  \ifx\svgwidth\undefined%
    \setlength{\unitlength}{151.19999313bp}%
    \ifx\svgscale\undefined%
      \relax%
    \else%
      \setlength{\unitlength}{\unitlength * \real{\svgscale}}%
    \fi%
  \else%
    \setlength{\unitlength}{\svgwidth}%
  \fi%
  \global\let\svgwidth\undefined%
  \global\let\svgscale\undefined%
  \makeatother%
  \begin{picture}(1,0.4761905)%
    \lineheight{1}%
    \setlength\tabcolsep{0pt}%
    \put(0,0){\includegraphics[width=\unitlength,page=1]{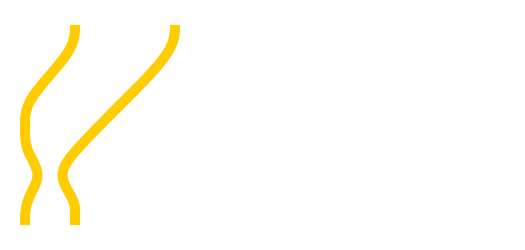}}%
    \put(0.44837687,0.22240049){\color[rgb]{0,0,0}\makebox(0,0)[lt]{\lineheight{1.25}\smash{\begin{tabular}[t]{l}$\cdots$\end{tabular}}}}%
    \put(0.44837687,0.0557333){\color[rgb]{0,0,0}\makebox(0,0)[lt]{\lineheight{1.25}\smash{\begin{tabular}[t]{l}$\cdots$\end{tabular}}}}%
    \put(0.44837687,0.3890672){\color[rgb]{0,0,0}\makebox(0,0)[lt]{\lineheight{1.25}\smash{\begin{tabular}[t]{l}$\cdots$\end{tabular}}}}%
    \put(0,0){\includegraphics[width=\unitlength,page=2]{conjugation-lemma-10.pdf}}%
  \end{picture}%
\endgroup%
 \ar[d,"\oldphi_2"] \\
\begingroup%
  \makeatletter%
  \providecommand\color[2][]{%
    \errmessage{(Inkscape) Color is used for the text in Inkscape, but the package 'color.sty' is not loaded}%
    \renewcommand\color[2][]{}%
  }%
  \providecommand\transparent[1]{%
    \errmessage{(Inkscape) Transparency is used (non-zero) for the text in Inkscape, but the package 'transparent.sty' is not loaded}%
    \renewcommand\transparent[1]{}%
  }%
  \providecommand\rotatebox[2]{#2}%
  \newcommand*\fsize{\dimexpr\f@size pt\relax}%
  \newcommand*\lineheight[1]{\fontsize{\fsize}{#1\fsize}\selectfont}%
  \ifx\svgwidth\undefined%
    \setlength{\unitlength}{151.19999313bp}%
    \ifx\svgscale\undefined%
      \relax%
    \else%
      \setlength{\unitlength}{\unitlength * \real{\svgscale}}%
    \fi%
  \else%
    \setlength{\unitlength}{\svgwidth}%
  \fi%
  \global\let\svgwidth\undefined%
  \global\let\svgscale\undefined%
  \makeatother%
  \begin{picture}(1,0.4761905)%
    \lineheight{1}%
    \setlength\tabcolsep{0pt}%
    \put(0.44837687,0.22240049){\color[rgb]{0,0,0}\makebox(0,0)[lt]{\lineheight{1.25}\smash{\begin{tabular}[t]{l}$\cdots$\end{tabular}}}}%
    \put(0.44837687,0.0557333){\color[rgb]{0,0,0}\makebox(0,0)[lt]{\lineheight{1.25}\smash{\begin{tabular}[t]{l}$\cdots$\end{tabular}}}}%
    \put(0.44837687,0.3890672){\color[rgb]{0,0,0}\makebox(0,0)[lt]{\lineheight{1.25}\smash{\begin{tabular}[t]{l}$\cdots$\end{tabular}}}}%
    \put(0,0){\includegraphics[width=\unitlength,page=1]{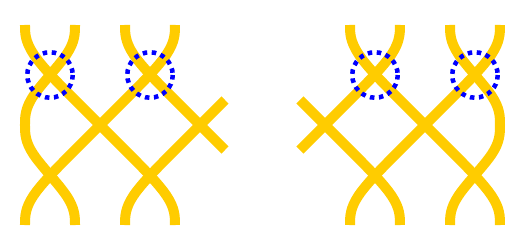}}%
  \end{picture}%
\endgroup%
 \ar[r,"\oldphi_1",start anchor=real east, end anchor=real west] \& 
\begingroup%
  \makeatletter%
  \providecommand\color[2][]{%
    \errmessage{(Inkscape) Color is used for the text in Inkscape, but the package 'color.sty' is not loaded}%
    \renewcommand\color[2][]{}%
  }%
  \providecommand\transparent[1]{%
    \errmessage{(Inkscape) Transparency is used (non-zero) for the text in Inkscape, but the package 'transparent.sty' is not loaded}%
    \renewcommand\transparent[1]{}%
  }%
  \providecommand\rotatebox[2]{#2}%
  \newcommand*\fsize{\dimexpr\f@size pt\relax}%
  \newcommand*\lineheight[1]{\fontsize{\fsize}{#1\fsize}\selectfont}%
  \ifx\svgwidth\undefined%
    \setlength{\unitlength}{151.19999313bp}%
    \ifx\svgscale\undefined%
      \relax%
    \else%
      \setlength{\unitlength}{\unitlength * \real{\svgscale}}%
    \fi%
  \else%
    \setlength{\unitlength}{\svgwidth}%
  \fi%
  \global\let\svgwidth\undefined%
  \global\let\svgscale\undefined%
  \makeatother%
  \begin{picture}(1,0.4761905)%
    \lineheight{1}%
    \setlength\tabcolsep{0pt}%
    \put(0,0){\includegraphics[width=\unitlength,page=1]{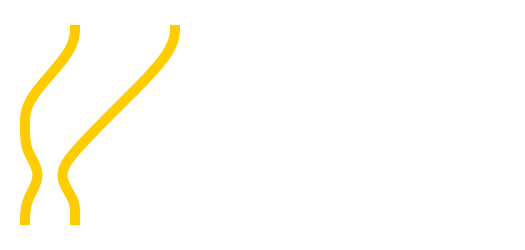}}%
    \put(0.44837687,0.22240049){\color[rgb]{0,0,0}\makebox(0,0)[lt]{\lineheight{1.25}\smash{\begin{tabular}[t]{l}$\cdots$\end{tabular}}}}%
    \put(0.44837687,0.0557333){\color[rgb]{0,0,0}\makebox(0,0)[lt]{\lineheight{1.25}\smash{\begin{tabular}[t]{l}$\cdots$\end{tabular}}}}%
    \put(0.44837687,0.3890672){\color[rgb]{0,0,0}\makebox(0,0)[lt]{\lineheight{1.25}\smash{\begin{tabular}[t]{l}$\cdots$\end{tabular}}}}%
    \put(0,0){\includegraphics[width=\unitlength,page=2]{conjugation-lemma-11.pdf}}%
  \end{picture}%
\endgroup%
 \nospaceperiod
    \end{tikzcd}
    \caption{The cube of resolutions for diagram $A$ in \cref{fig:conjugation-lemma}.}
    \label{fig:conjugation-cube}
\end{figure}

We can use MOY III moves to simplify three of the four corners of this cube. For $A_{00}$, we can start with a MOY III move on the left, simplifying the diagram. Each MOY III move we do allows us to do another, until we have done $n-1$ such moves moving left-to-right. We denote the resulting diagram $A''_{00}$; it is shown in \cref{fig:conjugation-cube-reduced}. By \cref{thm:moy-iii}, $A_{00}$ and $A'_{00}$ are $E_1$-quasi-isomorphic. 

Similarly, we can simplify $A_{11}$ to $A'_{11}$ by performing $n-1$ MOY III moves right-to-left, and we can simplify $A_{01}$ to $A'_{01}$ by performing $n-1$ MOY III moves left-to-right. In each case, \cref{thm:moy-iii} ensures we are preserving the $E_1$-quasi-isomorphism type. The resulting diagrammatic cube of resolutions is shown in \cref{fig:conjugation-cube-reduced}. Thus we obtain the complex

\begin{center}
\begin{tikzcd}[ampersand replacement=\&, row sep = huge, column sep = huge]
    \C(A''_{00})\spannedby{x} \ar[r,"\oldphi_1"] \ar[d,"\oldphi_2"] \& \C(A_{10}) \ar[d,"\oldphi_2"] \\
    \C(A''_{01})\spannedby{x} \directsum \Upsilon \ar[r,"\oldphi_1"] \& \C(A''_{11})\spannedby{x} \nospaceperiod
\end{tikzcd}
\end{center}

This cube ignores the $\Upsilon$ summands in $A_{00}$ and $A_{11}$ by \cref{lemma:homotopic-cubes}, but retains the $\Upsilon\defeq\Upsilon_1\directsum\dots\directsum\Upsilon_{n-1}$ summand in $A_{01}$. Further, as every vertex in the middle row is free, we may choose $x = (b_1 - c_1)\dots(b_{n-1} - c_{n-1})$ to be the generator for all three complexes modulo the linear ideal $L$. 

\begin{figure}[ht]
    \centering
    \begin{tikzcd}[ampersand replacement=\&, row sep = huge, column sep = huge]
        \raisebox{-0.5\height}{
\begingroup%
  \makeatletter%
  \providecommand\color[2][]{%
    \errmessage{(Inkscape) Color is used for the text in Inkscape, but the package 'color.sty' is not loaded}%
    \renewcommand\color[2][]{}%
  }%
  \providecommand\transparent[1]{%
    \errmessage{(Inkscape) Transparency is used (non-zero) for the text in Inkscape, but the package 'transparent.sty' is not loaded}%
    \renewcommand\transparent[1]{}%
  }%
  \providecommand\rotatebox[2]{#2}%
  \newcommand*\fsize{\dimexpr\f@size pt\relax}%
  \newcommand*\lineheight[1]{\fontsize{\fsize}{#1\fsize}\selectfont}%
  \ifx\svgwidth\undefined%
    \setlength{\unitlength}{151.19999313bp}%
    \ifx\svgscale\undefined%
      \relax%
    \else%
      \setlength{\unitlength}{\unitlength * \real{\svgscale}}%
    \fi%
  \else%
    \setlength{\unitlength}{\svgwidth}%
  \fi%
  \global\let\svgwidth\undefined%
  \global\let\svgscale\undefined%
  \makeatother%
  \begin{picture}(1,0.4761905)%
    \lineheight{1}%
    \setlength\tabcolsep{0pt}%
    \put(0.44837687,0.22240049){\color[rgb]{0,0,0}\makebox(0,0)[lt]{\lineheight{1.25}\smash{\begin{tabular}[t]{l}$\cdots$\end{tabular}}}}%
    \put(0.44837687,0.0557333){\color[rgb]{0,0,0}\makebox(0,0)[lt]{\lineheight{1.25}\smash{\begin{tabular}[t]{l}$\cdots$\end{tabular}}}}%
    \put(0.44837687,0.3890672){\color[rgb]{0,0,0}\makebox(0,0)[lt]{\lineheight{1.25}\smash{\begin{tabular}[t]{l}$\cdots$\end{tabular}}}}%
    \put(0,0){\includegraphics[width=\unitlength,page=1]{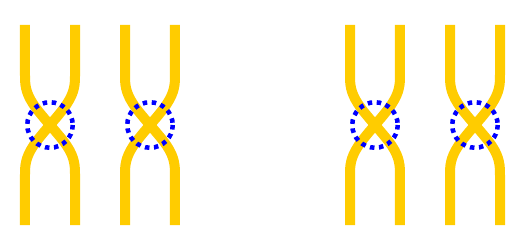}}%
  \end{picture}%
\endgroup%
} \ar[r,"\oldphi_1",start anchor=real east, end anchor=real west] \ar[d,"\oldphi_2"] \& \raisebox{-0.5\height}{} \ar[d,"\oldphi_2"] \\
        \raisebox{-0.5\height}{
\begingroup%
  \makeatletter%
  \providecommand\color[2][]{%
    \errmessage{(Inkscape) Color is used for the text in Inkscape, but the package 'color.sty' is not loaded}%
    \renewcommand\color[2][]{}%
  }%
  \providecommand\transparent[1]{%
    \errmessage{(Inkscape) Transparency is used (non-zero) for the text in Inkscape, but the package 'transparent.sty' is not loaded}%
    \renewcommand\transparent[1]{}%
  }%
  \providecommand\rotatebox[2]{#2}%
  \newcommand*\fsize{\dimexpr\f@size pt\relax}%
  \newcommand*\lineheight[1]{\fontsize{\fsize}{#1\fsize}\selectfont}%
  \ifx\svgwidth\undefined%
    \setlength{\unitlength}{151.19999313bp}%
    \ifx\svgscale\undefined%
      \relax%
    \else%
      \setlength{\unitlength}{\unitlength * \real{\svgscale}}%
    \fi%
  \else%
    \setlength{\unitlength}{\svgwidth}%
  \fi%
  \global\let\svgwidth\undefined%
  \global\let\svgscale\undefined%
  \makeatother%
  \begin{picture}(1,0.4761905)%
    \lineheight{1}%
    \setlength\tabcolsep{0pt}%
    \put(0.44837687,0.22240049){\color[rgb]{0,0,0}\makebox(0,0)[lt]{\lineheight{1.25}\smash{\begin{tabular}[t]{l}$\cdots$\end{tabular}}}}%
    \put(0.44837687,0.0557333){\color[rgb]{0,0,0}\makebox(0,0)[lt]{\lineheight{1.25}\smash{\begin{tabular}[t]{l}$\cdots$\end{tabular}}}}%
    \put(0.44837687,0.3890672){\color[rgb]{0,0,0}\makebox(0,0)[lt]{\lineheight{1.25}\smash{\begin{tabular}[t]{l}$\cdots$\end{tabular}}}}%
    \put(0,0){\includegraphics[width=\unitlength,page=1]{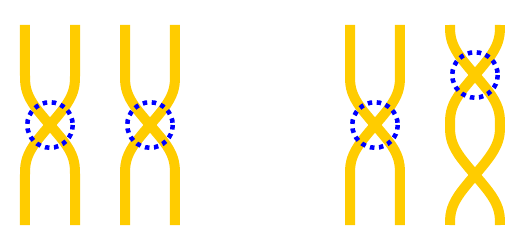}}%
  \end{picture}%
\endgroup%
} \directsum \Upsilon \ar[r,"\oldphi_1",start anchor=real east, end anchor=real west] \& \raisebox{-0.5\height}{} \nospaceperiod
    \end{tikzcd}
    \caption{The reduced cube of resolutions.}
    \label{fig:conjugation-cube-reduced}
\end{figure}

We further decompose $A''_{01}$ via an MOY II move on the right into two copies of $A''_{00}$, generated by $x$ and $a_nx$. Additionally, we see that $A''_{00}$ and $A''_{11}$ are isomorphic. We can compute the maps induced by $\phi_1$ and $\phi_2$ and write our complex as

\begin{center}
\begin{tikzcd}[ampersand replacement=\&, row sep = huge, column sep = huge]
    \C(A''_{00})\spannedby{x} \ar[r,"\oldphi_1"] \ar[d,"\pmat{-e_{n-1} \\ 1 \\ *}"] \& \C(A_{10}) \ar[d,"\oldphi_2"] \\
    \C(A''_{00})\spannedby{x} \directsum \C(A''_{00})\spannedby{a_n x} \directsum \Upsilon \ar[r,"\pmat{1 & f_{n-1} & *}"] \& \C(A''_{00})\spannedby{x} \nospaceperiod
\end{tikzcd}
\end{center}

We may cancel the $1$s in the above matrices to reduce the complex by \cref{lemma:gaussian-elimination} to obtain

\begin{center}
\begin{tikzcd}[ampersand replacement=\&, row sep = huge, column sep = huge]
    0 \ar[r] \ar[d] \& \C(A_{10}) \ar[d] \\
    \Upsilon \ar[r] \& 0 \nospaceperiod
\end{tikzcd}
\end{center}

Since $\Upsilon$ is a direct sum of $E_1$-acyclic complexes, we see that the $E_2$-page of the above complex is isomorphic to that of $\C(A_{10})$, which is isomorphic to $\C(A')$, thereby proving \cref{lemma:conjugation-lemma} in the case of a positive crossing on the left and a negative one on the right.

The opposite case is analogous; applying the same moves (mirrored horizontally) results in the complex
\begin{center}
\begin{tikzcd}[ampersand replacement=\&, row sep = huge, column sep = huge]
    \C(A''_{00})\spannedby{x} \ar[r,"\pmat{f_0 \\ -1 \\ *}"] \ar[d,"\oldphi_2"] \& \C(A''_{00})\spannedby{x} \directsum \C(A''_{00})\spannedby{b_0 x} \directsum \Upsilon \ar[d,"\pmat{1 & e_0 & *}"] \\
    \C(A_{01}) \ar[r,"\oldphi_1"] \& \C(A''_{00})\spannedby{x}
\end{tikzcd}
\end{center}
which we can simplify to get that the $E_2$-page is the same as that of $\C(A_{01})$ and therefore $\C(A')$.
\end{proof}

With this lemma in hand, we are now prepared to prove conjugation invariance.

\begin{figure}[ht]
    \centering
    \begin{tabularx}{\textwidth}{YY}
        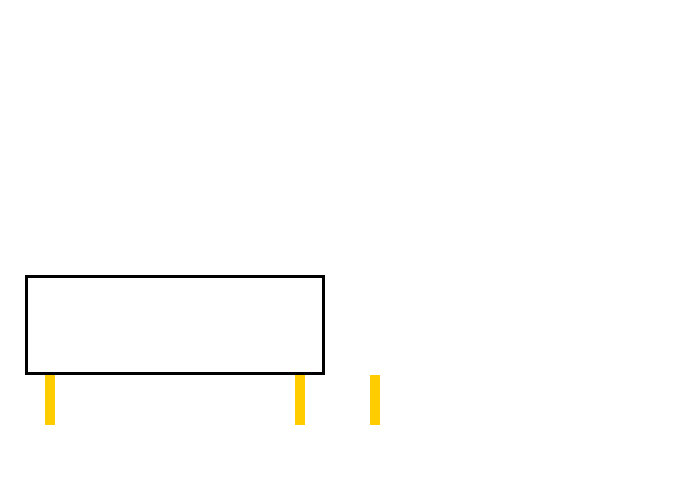 &
        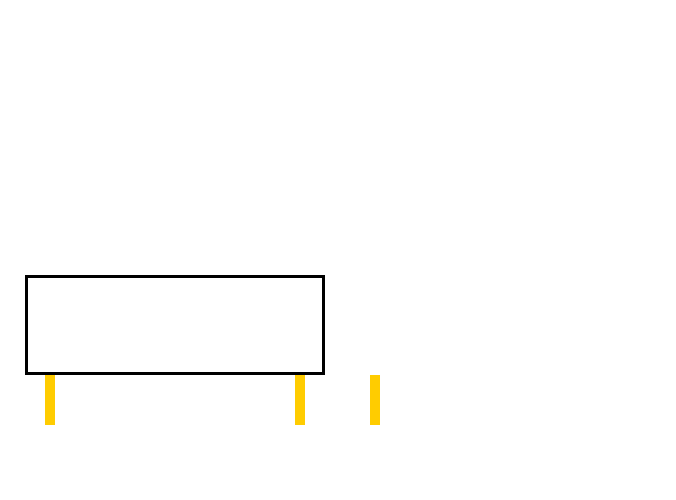 \\
        $D$ & $D'$
    \end{tabularx}
    \caption{When $\gamma=1$, the diagram $D$ is the result of conjugating the braid $\beta$ in $D'$ by a generator of the braid group.}
    \label{fig:conjugation}
\end{figure}

\begin{figure}[ht]
    \centering
    \begin{tabularx}{\textwidth}{YY}
        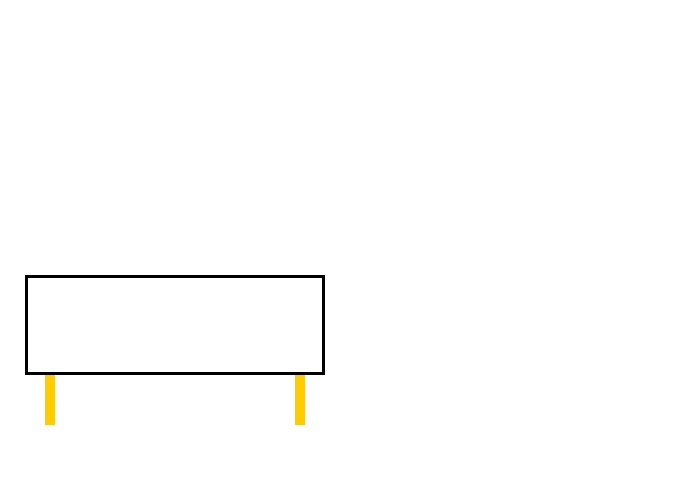 &
        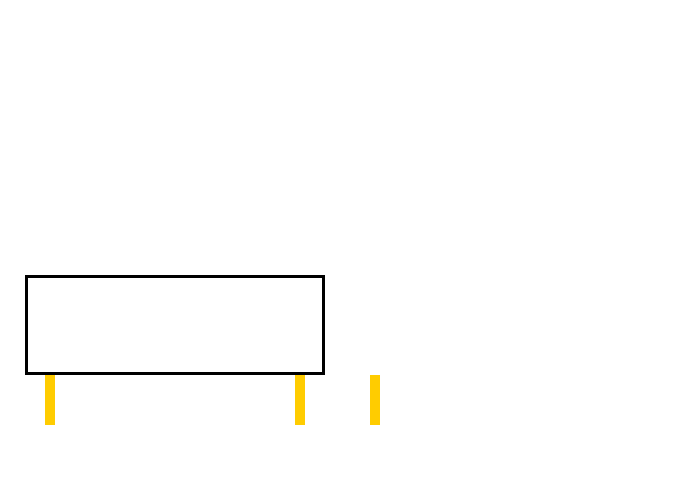 \\
        $D''$ & $D'''$
    \end{tabularx}
    \caption{Alternate diagrams for proving conjugation invariance.}
    \label{fig:conjugation-alt}
\end{figure}

\begin{proof}[Proof of \cref{thm:conjugation}]
It suffices to prove this in the case that $\alpha=\sigma_i^{\pm 1}$ is any generator of the braid group (or its inverse). Therefore, let $\sigma_i \in B_n$ be the generator which introduces a positive crossing between strands $i$ and $i+1$.

Graphically, we would like to show that $\C(D) \homotopic_1 \C(D')$, where $D$ and $D'$ are the partially singular braids depicted in \cref{fig:conjugation}, when $\gamma = 1$. In order to prove this, we will instead show that $\C(D'') \homotopic_1 \C(D') \homotopic_1 \C(D''')$ for a generic $\gamma \in B_n$, where $D''$ and $D'''$ are the diagrams in \cref{fig:conjugation-alt}.

Since we are considering the case $\alpha = \sigma_i$, note that in $D''$, the positive crossing occurs between strands $i$ and $i+1$. If we decompose $I_n$, we see that for any $i$, we locally get a picture like \cref{fig:conjugation-lemma}, where the top row of vertices is fixed if $i=1$, and free if $i > 1$. Therefore, we may apply \cref{lemma:conjugation-lemma} directly to see that $\C(D'') \homotopic_1 \C(D')$. Additionally, note that in $D'''$, the negative crossing occurs between strands $i$ and $i+1$. If we decompose $I_n$, we see that for any $i$, we locally get a picture like \cref{fig:conjugation-lemma}, except that the bottom row of vertices is fixed if $i=1$, and free if $i > 1$. In the latter case, this is not an issue and we may proceed as before to use \cref{lemma:conjugation-lemma} to prove that $\C(D''') \homotopic_1 \C(D')$. If $i=1$, then we first use \cref{thm:vertex-relabeling} to relabel the top row of vertices as free and the bottom row as fixed; this diagram is still in $\dr$ as it contains the open braid $S_{2n}$ from \cite{nate} as a sub-diagram, so we may proceed with the rest of the proof as usual.

Therefore, we can prove the desired equivalence $\C(D) \homotopic_1 \C(D')$ when $\gamma=1$ by first performing a Reidemeister II move to add two crossings to the right side of $D$, then using the equivalences $\C(D'') \homotopic_1 \C(D')$ and $\C(D''') \homotopic_1 \C(D')$ to simplify the diagram to $D'$. This proves \cref{thm:conjugation} in the case of $\alpha = \sigma_i$. We illustrate these steps for the case $n=2$ and $\alpha = \sigma_1$ in \cref{fig:conjugation-steps}. The proof for $\alpha = \sigma_i^{-1}$ is analogous, from which the proof for general $\alpha \in B_n$ follows.
\end{proof}

\begin{figure}[ht]
    \centering
    \begin{tabularx}{\textwidth}{YYYYYY}
\begingroup%
  \makeatletter%
  \providecommand\color[2][]{%
    \errmessage{(Inkscape) Color is used for the text in Inkscape, but the package 'color.sty' is not loaded}%
    \renewcommand\color[2][]{}%
  }%
  \providecommand\transparent[1]{%
    \errmessage{(Inkscape) Transparency is used (non-zero) for the text in Inkscape, but the package 'transparent.sty' is not loaded}%
    \renewcommand\transparent[1]{}%
  }%
  \providecommand\rotatebox[2]{#2}%
  \newcommand*\fsize{\dimexpr\f@size pt\relax}%
  \newcommand*\lineheight[1]{\fontsize{\fsize}{#1\fsize}\selectfont}%
  \ifx\svgwidth\undefined%
    \setlength{\unitlength}{72bp}%
    \ifx\svgscale\undefined%
      \relax%
    \else%
      \setlength{\unitlength}{\unitlength * \real{\svgscale}}%
    \fi%
  \else%
    \setlength{\unitlength}{\svgwidth}%
  \fi%
  \global\let\svgwidth\undefined%
  \global\let\svgscale\undefined%
  \makeatother%
  \begin{picture}(1,2)%
    \lineheight{1}%
    \setlength\tabcolsep{0pt}%
    \put(0,0){\includegraphics[width=\unitlength,page=1]{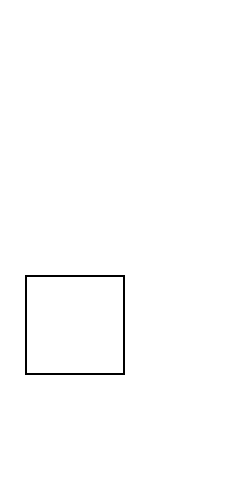}}%
    \put(0.26659063,0.66704113){\color[rgb]{0,0,0}\makebox(0,0)[lt]{\lineheight{1.25}\smash{\begin{tabular}[t]{l}$\beta$\end{tabular}}}}%
    \put(0,0){\includegraphics[width=\unitlength,page=2]{conjugation-step-1.pdf}}%
  \end{picture}%
\endgroup%
 &
\begingroup%
  \makeatletter%
  \providecommand\color[2][]{%
    \errmessage{(Inkscape) Color is used for the text in Inkscape, but the package 'color.sty' is not loaded}%
    \renewcommand\color[2][]{}%
  }%
  \providecommand\transparent[1]{%
    \errmessage{(Inkscape) Transparency is used (non-zero) for the text in Inkscape, but the package 'transparent.sty' is not loaded}%
    \renewcommand\transparent[1]{}%
  }%
  \providecommand\rotatebox[2]{#2}%
  \newcommand*\fsize{\dimexpr\f@size pt\relax}%
  \newcommand*\lineheight[1]{\fontsize{\fsize}{#1\fsize}\selectfont}%
  \ifx\svgwidth\undefined%
    \setlength{\unitlength}{72bp}%
    \ifx\svgscale\undefined%
      \relax%
    \else%
      \setlength{\unitlength}{\unitlength * \real{\svgscale}}%
    \fi%
  \else%
    \setlength{\unitlength}{\svgwidth}%
  \fi%
  \global\let\svgwidth\undefined%
  \global\let\svgscale\undefined%
  \makeatother%
  \begin{picture}(1,2)%
    \lineheight{1}%
    \setlength\tabcolsep{0pt}%
    \put(0,0){\includegraphics[width=\unitlength,page=1]{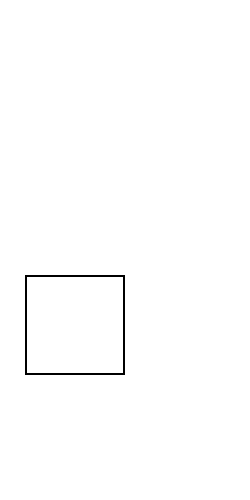}}%
    \put(0.26659063,0.66704113){\color[rgb]{0,0,0}\makebox(0,0)[lt]{\lineheight{1.25}\smash{\begin{tabular}[t]{l}$\beta$\end{tabular}}}}%
    \put(0,0){\includegraphics[width=\unitlength,page=2]{conjugation-step-2.pdf}}%
  \end{picture}%
\endgroup%
 &
\begingroup%
  \makeatletter%
  \providecommand\color[2][]{%
    \errmessage{(Inkscape) Color is used for the text in Inkscape, but the package 'color.sty' is not loaded}%
    \renewcommand\color[2][]{}%
  }%
  \providecommand\transparent[1]{%
    \errmessage{(Inkscape) Transparency is used (non-zero) for the text in Inkscape, but the package 'transparent.sty' is not loaded}%
    \renewcommand\transparent[1]{}%
  }%
  \providecommand\rotatebox[2]{#2}%
  \newcommand*\fsize{\dimexpr\f@size pt\relax}%
  \newcommand*\lineheight[1]{\fontsize{\fsize}{#1\fsize}\selectfont}%
  \ifx\svgwidth\undefined%
    \setlength{\unitlength}{72bp}%
    \ifx\svgscale\undefined%
      \relax%
    \else%
      \setlength{\unitlength}{\unitlength * \real{\svgscale}}%
    \fi%
  \else%
    \setlength{\unitlength}{\svgwidth}%
  \fi%
  \global\let\svgwidth\undefined%
  \global\let\svgscale\undefined%
  \makeatother%
  \begin{picture}(1,2)%
    \lineheight{1}%
    \setlength\tabcolsep{0pt}%
    \put(0,0){\includegraphics[width=\unitlength,page=1]{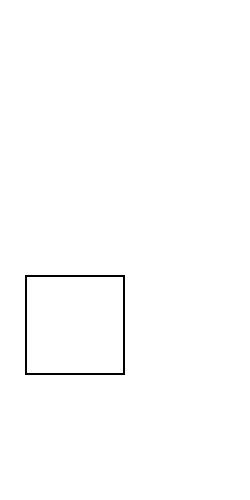}}%
    \put(0.26659063,0.66704113){\color[rgb]{0,0,0}\makebox(0,0)[lt]{\lineheight{1.25}\smash{\begin{tabular}[t]{l}$\beta$\end{tabular}}}}%
    \put(0,0){\includegraphics[width=\unitlength,page=2]{conjugation-step-3.pdf}}%
  \end{picture}%
\endgroup%
 &
\begingroup%
  \makeatletter%
  \providecommand\color[2][]{%
    \errmessage{(Inkscape) Color is used for the text in Inkscape, but the package 'color.sty' is not loaded}%
    \renewcommand\color[2][]{}%
  }%
  \providecommand\transparent[1]{%
    \errmessage{(Inkscape) Transparency is used (non-zero) for the text in Inkscape, but the package 'transparent.sty' is not loaded}%
    \renewcommand\transparent[1]{}%
  }%
  \providecommand\rotatebox[2]{#2}%
  \newcommand*\fsize{\dimexpr\f@size pt\relax}%
  \newcommand*\lineheight[1]{\fontsize{\fsize}{#1\fsize}\selectfont}%
  \ifx\svgwidth\undefined%
    \setlength{\unitlength}{72bp}%
    \ifx\svgscale\undefined%
      \relax%
    \else%
      \setlength{\unitlength}{\unitlength * \real{\svgscale}}%
    \fi%
  \else%
    \setlength{\unitlength}{\svgwidth}%
  \fi%
  \global\let\svgwidth\undefined%
  \global\let\svgscale\undefined%
  \makeatother%
  \begin{picture}(1,2)%
    \lineheight{1}%
    \setlength\tabcolsep{0pt}%
    \put(0,0){\includegraphics[width=\unitlength,page=1]{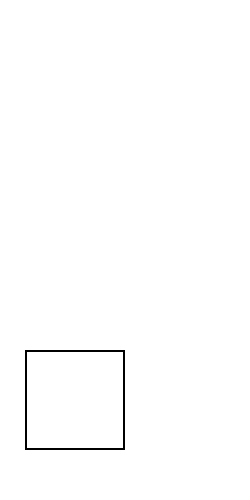}}%
    \put(0.26659063,0.36704104){\color[rgb]{0,0,0}\makebox(0,0)[lt]{\lineheight{1.25}\smash{\begin{tabular}[t]{l}$\beta$\end{tabular}}}}%
    \put(0,0){\includegraphics[width=\unitlength,page=2]{conjugation-step-4.pdf}}%
  \end{picture}%
\endgroup%
 &
\begingroup%
  \makeatletter%
  \providecommand\color[2][]{%
    \errmessage{(Inkscape) Color is used for the text in Inkscape, but the package 'color.sty' is not loaded}%
    \renewcommand\color[2][]{}%
  }%
  \providecommand\transparent[1]{%
    \errmessage{(Inkscape) Transparency is used (non-zero) for the text in Inkscape, but the package 'transparent.sty' is not loaded}%
    \renewcommand\transparent[1]{}%
  }%
  \providecommand\rotatebox[2]{#2}%
  \newcommand*\fsize{\dimexpr\f@size pt\relax}%
  \newcommand*\lineheight[1]{\fontsize{\fsize}{#1\fsize}\selectfont}%
  \ifx\svgwidth\undefined%
    \setlength{\unitlength}{72bp}%
    \ifx\svgscale\undefined%
      \relax%
    \else%
      \setlength{\unitlength}{\unitlength * \real{\svgscale}}%
    \fi%
  \else%
    \setlength{\unitlength}{\svgwidth}%
  \fi%
  \global\let\svgwidth\undefined%
  \global\let\svgscale\undefined%
  \makeatother%
  \begin{picture}(1,2)%
    \lineheight{1}%
    \setlength\tabcolsep{0pt}%
    \put(0,0){\includegraphics[width=\unitlength,page=1]{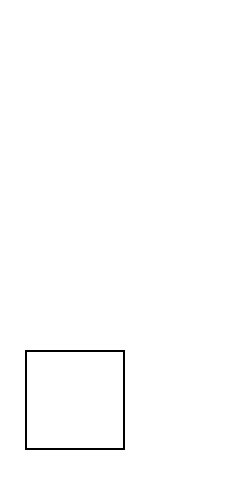}}%
    \put(0.26659063,0.36704104){\color[rgb]{0,0,0}\makebox(0,0)[lt]{\lineheight{1.25}\smash{\begin{tabular}[t]{l}$\beta$\end{tabular}}}}%
    \put(0,0){\includegraphics[width=\unitlength,page=2]{conjugation-step-5.pdf}}%
  \end{picture}%
\endgroup%
 &
\begingroup%
  \makeatletter%
  \providecommand\color[2][]{%
    \errmessage{(Inkscape) Color is used for the text in Inkscape, but the package 'color.sty' is not loaded}%
    \renewcommand\color[2][]{}%
  }%
  \providecommand\transparent[1]{%
    \errmessage{(Inkscape) Transparency is used (non-zero) for the text in Inkscape, but the package 'transparent.sty' is not loaded}%
    \renewcommand\transparent[1]{}%
  }%
  \providecommand\rotatebox[2]{#2}%
  \newcommand*\fsize{\dimexpr\f@size pt\relax}%
  \newcommand*\lineheight[1]{\fontsize{\fsize}{#1\fsize}\selectfont}%
  \ifx\svgwidth\undefined%
    \setlength{\unitlength}{72bp}%
    \ifx\svgscale\undefined%
      \relax%
    \else%
      \setlength{\unitlength}{\unitlength * \real{\svgscale}}%
    \fi%
  \else%
    \setlength{\unitlength}{\svgwidth}%
  \fi%
  \global\let\svgwidth\undefined%
  \global\let\svgscale\undefined%
  \makeatother%
  \begin{picture}(1,2)%
    \lineheight{1}%
    \setlength\tabcolsep{0pt}%
    \put(0,0){\includegraphics[width=\unitlength,page=1]{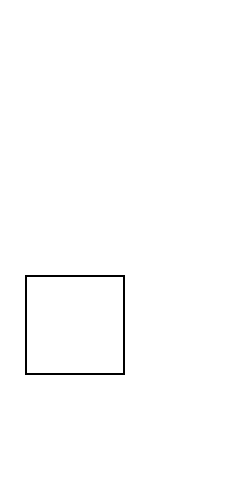}}%
    \put(0.26659063,0.66704113){\color[rgb]{0,0,0}\makebox(0,0)[lt]{\lineheight{1.25}\smash{\begin{tabular}[t]{l}$\beta$\end{tabular}}}}%
    \put(0,0){\includegraphics[width=\unitlength,page=2]{conjugation-step-6.pdf}}%
  \end{picture}%
\endgroup%

    \end{tabularx}
    \caption{The steps to prove conjugation invariance for $n=2$ and $\alpha=\sigma_1$.}
    \label{fig:conjugation-steps}
\end{figure}

\newpage
\appendix

\section{Homological Algebra}

In this section we review a few lemmas in homological algebra that will aid in our calculations. We note that all four lemmas are true for filtered complexes, replacing maps with filtered maps and quasi-isomorphisms with filtered quasi-isomorphisms. Additionally, if we instead assume that filtered maps have filtration degree 1, then these lemmas still hold, replacing $\cone(f)$ with $\cone_1(f)$ and filtered quasi-isomorphism with $E_1$-quasi-isomorphism.

\begin{lemma}{{\cite[Lemma 4.2]{barnatan2006}}}
\label{lemma:gaussian-elimination}
If $\varphi: A \to B$ is an isomorphism of complexes, then the double complexes
\begin{center}
\begin{tikzcd}[ampersand replacement=\&, row sep = huge, column sep = huge]
C \ar[r,"\pmat{\alpha \\ \beta}"] \& A \directsum D \ar[r,"\pmat{\varphi & \delta \\ \gamma & \epsilon}"] \& B \directsum E \ar[r,"\pmat{\mu & \nu }"] \& F
\end{tikzcd}
\end{center}
and
\begin{center}
\begin{tikzcd}[ampersand replacement=\&, row sep = huge, column sep = huge]
C \ar[r,"\beta"] \& D \ar[r,"\epsilon-\gamma\varphi^{-1}\delta"] \& E \ar[r,"\nu"] \& F
\end{tikzcd}
\end{center}
are quasi-isomorphic.
\end{lemma}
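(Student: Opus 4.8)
The plan is to reduce the lemma to the standard fact that an acyclic direct summand of a complex may be discarded: I will exhibit a change of basis that makes $\varphi$ split off as an acyclic two-term subcomplex $A\xrightarrow{\varphi}B$. Write $T$ and $T'$ for the total complexes of the first and second diagrams. First I would record the relations among $\alpha,\dots,\nu$ that are forced by the hypothesis that the first diagram is a complex: consecutive horizontal arrows compose to zero, so reading off the $(1,1)$-entry of $\spmat{\varphi&\delta\\\gamma&\epsilon}\spmat{\alpha\\\beta}$ and the $(1,1)$-entry of $\pmat{\mu&\nu}\spmat{\varphi&\delta\\\gamma&\epsilon}$ gives
\[
  \varphi\alpha+\delta\beta=0 \qquad\text{and}\qquad \mu\varphi+\nu\gamma=0,
\]
and each arrow is a chain map, so $\varphi^{-1}$ and the composites $\varphi^{-1}\delta\colon D\to A$ and $\gamma\varphi^{-1}\colon B\to E$ are chain maps as well. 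Any signs produced by the conventions used to assemble the total complex are harmless and will be absorbed into the maps below.

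Next I would apply the block-triangular changes of basis $g=\spmat{1 & -\varphi^{-1}\delta\\ 0 & 1}$ on $A\directsum D$ and $h=\spmat{1 & 0\\ \gamma\varphi^{-1} & 1}$ on $B\directsum E$, keeping the identity on $C$ and $F$; these respect the internal differentials precisely because $\varphi^{-1}\delta$ and $\gamma\varphi^{-1}$ are chain maps. Conjugating the differential of $T$ by $\mathrm{id}_C\directsum g\directsum h\directsum\mathrm{id}_F$ turns the incoming map $\spmat{\alpha\\\beta}$ into $\spmat{0\\\beta}$ (using $\varphi\alpha+\delta\beta=0$), turns the outgoing map $\pmat{\mu&\nu}$ into $\pmat{0&\nu}$ (using $\mu\varphi+\nu\gamma=0$), and turns the middle map $\spmat{\varphi&\delta\\\gamma&\epsilon}$ first into $\spmat{\varphi&0\\\gamma&\epsilon-\gamma\varphi^{-1}\delta}$ and then into the block-diagonal map $\spmat{\varphi&0\\0&\epsilon-\gamma\varphi^{-1}\delta}$. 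The resulting complex is visibly the direct sum of $A\xrightarrow{\varphi}B$ and $C\xrightarrow{\beta}D\xrightarrow{\epsilon-\gamma\varphi^{-1}\delta}E\xrightarrow{\nu}F$; that is, $T\iso(A\xrightarrow{\varphi}B)\directsum T'$. Since $\varphi$ is an isomorphism, $A\xrightarrow{\varphi}B$ is acyclic, so the projection $T\to T'$ is a quasi-isomorphism — in fact a homotopy equivalence, whose maps one reads off from $g,h$: $T\to T'$ is $\pmat{0&1}$ on $A\directsum D$ and $\pmat{-\gamma\varphi^{-1}&1}$ on $B\directsum E$, with homotopy-inverse $T'\to T$ sending $d\in D$ to $(-\varphi^{-1}\delta d,\,d)$ and $e\in E$ to $(0,e)$ (and the identity on $C$ and $F$).

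The one genuinely fiddly point is the sign bookkeeping: depending on how the total complex is put together, the two displayed relations and the off-diagonal entries of $g$ and $h$ may acquire signs, and one must double-check that $g$ and $h$ commute with the internal differentials so that the conjugated differential really has the stated form — but this is immediate once $\varphi^{-1}\delta$ and $\gamma\varphi^{-1}$ are recognized as chain maps, and I do not anticipate any conceptual obstacle. Finally, the filtered and filtration-degree-one (i.e.\ $E_1$) variants flagged at the start of the appendix follow from exactly the same computation: $g$, $h$, and the homotopy equivalence above are assembled solely from $\varphi^{-1}$ and the given maps, so they inherit whichever of these structures $\varphi$ and $\alpha,\dots,\nu$ carry, while $A\xrightarrow{\varphi}B$ remains acyclic in the appropriate sense.
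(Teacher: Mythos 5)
Your proof is correct: the relations $\varphi\alpha+\delta\beta=0$ and $\mu\varphi+\nu\gamma=0$ do follow from $d^2=0$, and conjugating by $\mathrm{id}\oplus g\oplus h\oplus\mathrm{id}$ (with the consistent convention $\tilde d = u^{-1}du$) does split the complex as $(A\xrightarrow{\varphi}B)\oplus(C\xrightarrow{\beta}D\xrightarrow{\epsilon-\gamma\varphi^{-1}\delta}E\xrightarrow{\nu}F)$, with the contractible first summand giving even a homotopy equivalence, which is stronger than the quasi-isomorphism claimed. The paper itself does not prove this lemma but cites Bar-Natan's Lemma 4.2, and your change-of-basis argument is essentially the same Gaussian-elimination proof given there, so there is nothing further to add.
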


This lemma is proved for the very general case of additive categories in \cite{barnatan2006}, and is well known in the specific case of free modules over a ring as the ``cancellation lemma'' or ``reduction algorithm''. We require it to prove invariance in \cref{sec:invariance}, as it greatly simplifies calculations involving cubes of resolutions.

\begin{lemma}\label{lemma:homotopic-cones}
Let $f: A \to B$ be a map of complexes, and suppose that $A \iso A' \directsum A''$ and $B \iso B' \directsum B''$, where $A''$ and $B''$ are acyclic. Let $\iota: A' \inj A$ and $\pi: B \surj B'$ be the associated inclusion and projection maps, respectively. Then $\cone(f) \homotopic \cone(\pi \comp f \comp \iota)$.
\end{lemma}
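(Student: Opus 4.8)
The plan is to exhibit the weak equivalence $\cone(f)\homotopic\cone(\pi\comp f\comp\iota)$ as a short zig-zag of quasi-isomorphisms that factors through an intermediate cone: first we collapse the acyclic summand $B''$ of the target, and then we adjoin the acyclic summand $A''$ of the source. Throughout we write $f$ in block form with respect to $A\iso A'\directsum A''$ and $B\iso B'\directsum B''$, noting that $\pi\comp f\comp\iota$ is the top-left block $f_{11}\colon A'\to B'$.

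For the first step I would observe that the target $B$ is a subcomplex of $\cone(f)$ (the differential of $\cone(f)$ never maps out of $B$), and since $B\iso B'\directsum B''$ \emph{as complexes}, $B''$ is itself a subcomplex. The quotient $\cone(f)/B''$ is canonically $\cone(\pi\comp f\colon A\to B')$, so the quotient map $q\colon\cone(f)\to\cone(\pi\comp f)$ fits in a short exact sequence $0\to B''\to\cone(f)\to\cone(\pi\comp f)\to 0$; since $B''$ is acyclic, the long exact sequence in homology shows $q$ is a quasi-isomorphism. For the second step I would run the dual argument on the source: inside $\cone(\pi\comp f)$ the subspace $A'[1]\directsum B'$ is a subcomplex, because $d_A$ preserves $A'$ (it is a complex direct summand) and $\pi\comp f$ lands in $B'$; this subcomplex is exactly $\cone(\pi\comp f\comp\iota)$, and the quotient by it is $A''[1]$, which is acyclic. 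Hence the inclusion $j\colon\cone(\pi\comp f\comp\iota)\inj\cone(\pi\comp f)$ is a quasi-isomorphism, and the zig-zag $\cone(f)\xrightarrow{q}\cone(\pi\comp f)\xleftarrow{j}\cone(\pi\comp f\comp\iota)$ gives the claim.

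Finally I would check that everything is compatible with filtrations: the decompositions are decompositions of filtered complexes, the complexes $B$, $B''$, and $A'[1]\directsum B'$ carry the subspace filtration, and the two short exact sequences split as graded objects, so applying $\gr(-)$ and using that $\gr B''$ and $\gr A''$ are acyclic shows $q$ and $j$ are filtered quasi-isomorphisms; replacing $\cone$ by $\cone_1$ verbatim handles the $E_1$-version flagged at the start of the appendix. The one point that needs care — the ``main obstacle'' in an otherwise routine argument — is that there is genuinely no single chain map realizing the equivalence in either direction: projecting $A$ onto $A'$ does not commute with the differential of $\cone(\pi\comp f)$ because of the off-diagonal block $f_{12}\colon A''\to B'$, so one really must route through the intermediate cone $\cone(\pi\comp f)$ rather than trying to write down a direct map. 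Everything else is the standard ``acyclic subcomplex / acyclic quotient'' bookkeeping.
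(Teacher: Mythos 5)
Your argument is correct, and it proves the same statement by the same underlying idea as the paper (handle $\iota$ and $\pi$ one at a time, using acyclicity of $A''$ and $B''$ plus a long exact sequence), but the implementation is genuinely different. The paper first shows $\cone(\iota)$ and $\cone(\pi)$ are acyclic (via \cref{lemma:gaussian-elimination}) and then invokes the octahedral-axiom triangle relating $\cone(\alpha)$, $\cone(\beta)$, $\cone(\beta\comp\alpha)$, passing through the intermediate object $\cone(f\comp\iota)$; you instead pass through $\cone(\pi\comp f)$ and replace the triangulated-category input by two explicit short exact sequences of complexes, $0 \to B'' \to \cone(f) \to \cone(\pi\comp f) \to 0$ and $0 \to \cone(\pi\comp f\comp\iota) \to \cone(\pi\comp f) \to A''[1] \to 0$, each of which is immediate because $B''$ (resp.\ $A'$) is a direct summand subcomplex. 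This is more elementary and self-contained, and since both sequences are split as (graded, indeed filtered) modules, your filtered/$E_1$ upgrade via $\gr(-)$ is cleaner than relying on the blanket remark at the start of the appendix; the paper's route is slightly more general in that it never uses that $\iota$ and $\pi$ are split. One small caveat: your closing remark that ``there is genuinely no single chain map realizing the equivalence'' is stronger than what you verify — you only show that the obvious block projections/inclusions fail to be chain maps when $f$ has nonzero off-diagonal blocks — but this is a side comment and does not affect the proof.
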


\begin{proof}
First, we note that $\cone(\iota)$ is acyclic. One way to see this is via a cancellation argument: we have that $\cone(\iota) \iso \left(A' \to A' \directsum A''\right)$, which is quasi-isomorphic to $A''$ by \cref{lemma:gaussian-elimination}. Similarly, we get that $\cone(\pi)$, being quasi-isomorphic to $B''$, is acyclic as well.

For any two maps $\alpha: X \to Y$ and $\beta: Y \to Z$ of complexes, we have a long exact sequence relating the homology groups of $\cone(\alpha)$, $\cone(\beta)$, and $\cone(\beta\comp\alpha)$ (for example, via the octahedral axiom for triangulated categories applied to the derived category of $R$-modules). Therefore, we get that $\cone(f) \homotopic \cone(f \comp \iota) \homotopic \cone(\pi \comp f \comp \iota)$.
\end{proof}

\begin{lemma}\label{lemma:homotopic-cubes}
Let $A,B,C,D$ be complexes, and suppose that $A \iso A' \directsum A''$ and $D \iso D' \directsum D''$, where $A''$ and $D''$ are acyclic. Let $\iota: A' \inj A$ and $\pi: D \surj D'$ be the associated inclusion and projection maps, respectively.  Then the following two cube of resolutions complexes have the same homotopy type:

\begin{tabularx}{\textwidth}{XX}
\begin{center}
\begin{tikzcd}[ampersand replacement = \&, row sep = huge, column sep = huge]
A \ar[r,"f_1"] \ar[d,"g_1"] \& B \ar[d,"g_2"] \\
C \ar[r,"f_2"] \& D
\end{tikzcd}
\end{center}
&
\begin{center}
\begin{tikzcd}[ampersand replacement = \&, row sep = huge, column sep = huge]
A' \ar[r,"f_1 \comp \iota"] \ar[d,"g_1 \comp \iota"] \& B \ar[d,"\pi \comp g_2"] \\
C \ar[r,"\pi \comp f_2"] \& D' \nospaceperiod
\end{tikzcd}
\end{center}
\end{tabularx}
\end{lemma}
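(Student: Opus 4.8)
The plan is to recognize the complex associated to a $2\times 2$ cube of resolutions as an iterated mapping cone and then reduce to \cref{lemma:homotopic-cones}. First I would rewrite the complex attached to the left-hand square as the cone of the chain map induced on its columns: letting $F\colon \cone(g_1\colon A\to C)\to\cone(g_2\colon B\to D)$ be the map that equals $f_1$ on the $A$-summand and $f_2$ on the $C$-summand, the commutativity $g_2\comp f_1=f_2\comp g_1$ of the square is precisely the statement that $F$ is a chain map, and — with the usual sign conventions for the cube differential — the cube complex is $\cone(F)$. In the same way, the right-hand cube has complex $\cone(F')$, where $F'\colon\cone(g_1\comp\iota\colon A'\to C)\to\cone(\pi\comp g_2\colon B\to D')$ is $f_1\comp\iota$ on the $A'$-summand and $\pi\comp f_2$ on the $C$-summand.

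Next I would build the comparison maps on columns. Let $\iota_*\colon\cone(g_1\comp\iota)\to\cone(g_1)$ be induced by $\iota$ on sources and $\id_C$ on $C$, and let $\pi_*\colon\cone(g_2)\to\cone(\pi\comp g_2)$ be induced by $\id_B$ on $B$ and $\pi$ on targets; both are visibly chain maps. Since $A\iso A'\directsum A''$ with $A''$ acyclic, the map $\iota\colon A'\inj A$ is a quasi-isomorphism, and comparing the triangles $\bigl(A'\to C\to\cone(g_1\comp\iota)\bigr)$ and $\bigl(A\to C\to\cone(g_1)\bigr)$ via the five lemma on their long exact sequences in homology shows that $\iota_*$ is a quasi-isomorphism; symmetrically $\pi_*$ is a quasi-isomorphism because $\pi\colon D\surj D'$ is one. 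A direct computation then gives $\pi_*\comp F\comp\iota_*=F'$, since on the $A'$-summand the composite is $A'\xrightarrow{\iota}A\xrightarrow{f_1}B$ and on the $C$-summand it is $C\xrightarrow{f_2}D\xrightarrow{\pi}D'$.

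Finally I would apply \cref{lemma:homotopic-cones} to $F$, using the quasi-isomorphisms $\iota_*$ and $\pi_*$ in place of its inclusion and projection, obtaining $\cone(F)\homotopic\cone(\pi_*\comp F\comp\iota_*)=\cone(F')$; this is the desired equivalence of cube complexes. The one point that needs care — and what I would call the main obstacle — is that $\iota_*$ and $\pi_*$ need \emph{not} exhibit $\cone(g_1)$ and $\cone(g_2)$ as honest direct sums with acyclic complements (the off-diagonal parts, such as $g_1|_{A''}\colon A''\to C$, obstruct a genuine splitting of the column complexes), so \cref{lemma:homotopic-cones} is not being invoked literally. However, the proof of that lemma uses only that the maps it calls $\iota$ and $\pi$ are quasi-isomorphisms — equivalently that $\cone(\iota)$ and $\cone(\pi)$ are acyclic — so the argument applies verbatim; alternatively one can simply inline the octahedral/long-exact-sequence step, applying it first to $F\comp\iota_*$ and then to $\pi_*\comp(F\comp\iota_*)$. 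As remarked at the start of the appendix, the filtered and $E_1$ versions are proved identically, replacing $\cone$ by $\cone_1$ and $\homotopic$ by $\homotopic_1$ throughout.
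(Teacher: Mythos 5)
Your proposal is correct and follows essentially the same route as the paper: both identify each square complex as the cone of the map between column cones ($f$ resp.\ $f'$), observe that the comparison maps $\cone(g_1\comp\iota)\to\cone(g_1)$ and $\cone(g_2)\to\cone(\pi\comp g_2)$ are quasi-isomorphisms, and then run the composition/long-exact-sequence (octahedral) argument from \cref{lemma:homotopic-cones} to conclude $\cone(f)\homotopic\cone(f')$. Your caveat that \cref{lemma:homotopic-cones} is invoked through its proof (only needing the comparison maps to be quasi-isomorphisms) rather than its literal direct-sum hypothesis matches how the paper itself uses it.
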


\begin{proof}
We know that the inclusion $\cone(g_1 \comp \iota) \inj \cone(g_1)$ and the projection $\cone(g_2) \surj \cone(\pi \comp g_2)$ are quasi-isomorphisms by the proof of \cref{lemma:homotopic-cones}.
\begin{center}
\begin{tikzcd}[ampersand replacement = \&, row sep = huge, column sep = huge]
A' \ar[r,"\iota"] \ar[d,"g_1\comp\iota"] \& A \ar[d,"g_1"] \& B \ar[r,"\id_B"] \ar[d,"g_2"] \& B \ar[d,"\pi\comp g_2"] \\
C \ar[r,"\id_C"] \& C \& D \ar[r,"\pi"] \& D'
\end{tikzcd}
\end{center}
We can also view the maps $f_1: A \to B$ and $f_2: C \to D$ as components of a map $f: \cone(g_1) \to \cone(g_2)$. Therefore, we can compose $f$ with the inclusion and projection to get a single map $f': \cone(g_1 \comp \iota) \to \cone(\pi \comp g_2)$. By the same long exact sequence logic as before, the cone of this map has the same homotopy type as $f$, i.e.\
\begin{equation*}
    \cone(f') = \cone(\cone(g_1 \comp \iota) \to \cone(\pi \comp g_2)) \homotopic \cone(\cone(g_1) \to \cone(g_2)) = \cone(f)
\end{equation*}
We conclude by noting that the complex on the left in \cref{lemma:homotopic-cubes} is $\cone(f)$, and the complex on the right is $\cone(f')$.
\end{proof}

While the above lemma is phrased only for squares, it can be iterated to reduce summands of higher-dimensional cubes as well.

Since our complexes in this paper are often constructed as mapping cones, it will help to know when a quasi-isomorphism is induced by maps on the components of the cone.

\begin{lemma}\label{lemma:cone-components}
Suppose that we have the following commutative diagram of chain maps.
\begin{center}
\begin{tikzcd}[ampersand replacement = \&, row sep = huge, column sep = huge]
A_0 \ar[r, "g"] \ar[d, "f_0"] \& A_1 \ar[d, "f_1"] \\
B_0 \ar[r, "g'"] \& B_1
\end{tikzcd}
\end{center}
Let $A = \cone(g)$ and $B = \cone(g')$, so that we get a map $f: A \to B$ with components $f_0$ and $f_1$. If $f_0$ and $f_1$ are quasi-isomorphisms, then so is $f$.
\end{lemma}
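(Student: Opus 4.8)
The plan is to reduce the statement to the five lemma applied to the long exact sequences attached to the two mapping cones.

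First I would recall the standard short exact sequence of complexes associated to a chain map. For $g\colon A_0\to A_1$ the cone $A=\cone(g)$ has underlying module $A_0[1]\directsum A_1$ with differential $\spmat{-d_{A_0}&0\\ g&d_{A_1}}$, and there is a short exact sequence
\[
0\longrightarrow A_1\xrightarrow{\ \iota\ } \cone(g)\xrightarrow{\ p\ } A_0[1]\longrightarrow 0,
\]
where $\iota$ includes the second summand and $p$ projects onto the first. Doing the same for $g'\colon B_0\to B_1$, the commutativity of the given square, $g'\comp f_0 = f_1\comp g$, is exactly what is needed for $f=\spmat{f_0&0\\0&f_1}\colon \cone(g)\to\cone(g')$ to be a chain map; moreover by construction $f\comp\iota = \iota'\comp f_1$ and $p'\comp f = f_0[1]\comp p$. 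Thus $f$ is a morphism of short exact sequences of complexes, with $f_1$ on the sub-objects and $f_0$ on the quotients.

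Next I would apply the homology functor and use naturality of the connecting homomorphism to obtain the commutative ladder of long exact sequences in which the vertical arrows are, cyclically, $(f_1)_*$, $f_*$, $(f_0)_*$, $(f_1)_*,\dots$. Since $f_0$ and $f_1$ are quasi-isomorphisms, every vertical arrow of the form $(f_i)_*\colon H_n(A_i)\to H_n(B_i)$ is an isomorphism; the five lemma then forces $f_*\colon H_n(\cone(g))\to H_n(\cone(g'))$ to be an isomorphism in every degree, so $f$ is a quasi-isomorphism.

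There is essentially no obstacle here: the argument is a routine diagram chase, and the only care required is to fix the sign and shift conventions in $\cone(-)$ so that the square genuinely yields a morphism of short exact sequences. Finally, as noted at the beginning of this appendix, the identical argument — replacing $\cone$ by $\cone_1$, quasi-isomorphisms by filtered quasi-isomorphisms or $E_1$-quasi-isomorphisms, and ordinary homology by the relevant page of the associated spectral sequence — gives the filtered and $E_1$ analogues invoked in \cref{sec:invariance}.
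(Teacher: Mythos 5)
Your proposal is correct and follows essentially the same route as the paper: realize $f$ as a morphism of the standard short exact sequences $0 \to A_1 \to \cone(g) \to A_0[1] \to 0$ and $0 \to B_1 \to \cone(g') \to B_0[1] \to 0$, pass to the long exact sequences in homology, and conclude by the five lemma. Your write-up is in fact slightly more careful about the shift/sign conventions and the naturality of the connecting map than the paper's, but the argument is the same.
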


\begin{proof}
By properties of the mapping cone, $f$ induces a map of short exact sequences (with grading shifts suppressed)
\begin{center}
\begin{tikzcd}[ampersand replacement = \&, row sep = huge, column sep = huge]
0 \ar[r] \& A_1 \ar[r] \ar[d,"f_0"] \& A \ar[r] \ar[d,"f"] \& A_0 \ar[d,"f_1"] \ar[r] \& 0 \\
0 \ar[r] \& B_1 \ar[r] \& B \ar[r] \& B_0 \ar[r] \& 0
\end{tikzcd}
\end{center}
We can look at the induced map of long exact sequences in homology
\begin{center}
\begin{tikzcd}[ampersand replacement = \&, row sep = huge, column sep = large]
\dots \ar[r] \& H_*(A_1) \ar[r] \ar[d,"H_*(f_1)"] \& H_*(A) \ar[r] \ar[d,"H_*(f)"] \& H_*(A_0) \ar[d,"H_*(f_0)"]  \ar[r] \& \dots \\
\dots \ar[r] \& H_*(B_1) \ar[r] \& H_*(B) \ar[r] \& H_*(B_0)  \ar[r] \& \dots
\end{tikzcd}
\end{center}
to conclude that $H_*(f)$ must be an isomorphism as well, so $f$ is a quasi-isomorphism.
\end{proof}

To prove the filtered generalizations of \cref{lemma:cone-components}, one replaces $H_*(-)$ with $E_1(-)$ or $E_2(-)$ (see \cite[Exercise 5.4.4]{weibel1994}).

\bibliographystyle{plain}
\bibliography{master}

\end{document}